\newtheorem{thm}{Theorem}[section]
\newtheorem{cor}[thm]{Corollary}
\newtheorem{prop}[thm]{Proposition}
\newtheorem{lemma}[thm]{Lemma}
\theoremstyle{definition}
\newtheorem{remark}[thm]{Remark}
\newtheorem{example}[thm]{Example}
\newtheorem{question}[thm]{Question}
\newcommand{\nc}{\newcommand}
\nc {\hh}{\check{h}}
\nc {\DD}{\mathcal{D}}
\nc {\RR}{\mathcal{R}}
\nc {\Pp}{\mathbb{P}}
\nc {\Ss}{\mathcal{S}}
\nc {\PP}{\mathbb{P}^{2}}
\nc {\Pd}{ \check{\mathbb{P}}^{2}}
\nc {\WW}{\mathcal{W}}
\nc {\Sym}{\mathrm{Sym}}
\nc {\OO}{\mathcal{O}}
\nc {\CC}{\mathbb{C}}
\nc {\EE}{\mathcal{E}}
\nc {\MM}{\mathcal{M}}
\nc {\KK}{\mathcal{K}}
\nc {\PW}{\mathcal{P}}
\nc {\NW}{\mathcal{N}_{\WW}}
\nc {\FF}{\mathcal{F}}
\nc {\GG}{\mathcal{G}}
\nc {\ZZ}{\mathcal{Z}}
\nc {\LL}{\mathcal{L}}
\nc {\HH}{\mathcal{H}}
\nc {\NN}{\mathcal{N}}
\nc {\VV}{\mathcal{V}}
\nc {\Ww}{\mathbb{W}}
\nc {\QQ}{\mathbb{Q}}
\nc {\II}{\mathcal{I}}
\nc {\tang}{\mathrm{Tang}}
\nc {\Disc}{\mathbb{D}}
\date{\today}
\begin{document}

\title[Projective structures, neighborhoods of curves and Painlev\'e equations]{Projective structures, neighborhoods of rational curves and Painlev\'e equations}
\author[M. Falla Luza, F. Loray]
{Maycol Falla Luza$^{1}$, Frank Loray$^2$}
\address{\newline $1$ UFF, Universidad Federal Fluminense, rua M\'ario Santos Braga S/N, 
Niter\'oi, RJ, Brazil, Brasil\hfill\break
$2$ Univ Rennes 1, CNRS, IRMAR, UMR 6625, F-35000 Rennes, France} 
\email{$^1$ maycolfl@gmail.com} \email{$^2$ frank.loray@univ-rennes1.fr}
\subjclass{} \keywords{Foliation, Projective Structure, Rational Curves}
\thanks{The first author is supported by CNPq. 
The second author is supported by CNRS, Henri Lebesgue Center and ANR-16-CE40-0008 project ``{\it Foliage}''. The authors also thank Brazilian-French Network in Mathematics and CAPES-COFECUB Project Ma 932/19
``{\it Feuilletages holomorphes et int\'eractions avec la g\'eom\'etrie}''.
The authors thank Jorge Vit\'orio Pereira and Luc Pirio for numerous discussions, and Sergei Ivachkovitch 
for letting us know about Mishustin's work.}

\begin{abstract}
We investigate the duality between local (complex analytic) projective structures on surfaces
and two dimensional (complex analytic) neighborhoods of rational curves having self-intersection $+1$. 
We study the analytic classification, existence of normal forms, pencil/fibration decomposition, 
infinitesimal symmetries. We deduce some transcendental result about Painlev\'e equations.
Part of the results were announced in \cite{CRAS}; an extended version is available in \cite{MaycolFrank}.
\end{abstract}
\maketitle
\setcounter{tocdepth}{1}
\tableofcontents

\section{Introduction}

Duality between lines and points in $\mathbb P^2$ has a nice non linear generalization
which goes back at least to the works of \'Elie Cartan, and even appears in Alfred Koppisch's thesis in 1905.
The simplest (or more familiar) setting 
where this duality takes place is when considering the geodesics of a given
Riemannian metric (say, real analytic, or holomorphic) on the neighborhood $U$ 
of the origin in the plane.
The space of geodesics is itself a surface $U^\vee$ that can be constructed as follows.
The projectivized tangent bundle $\mathbb P(T_U)$ is naturally a contact manifold:
given coordinates $(x,y)$ on $U$, the open set of ``non vertical'' directions is parametrized
by triples $(x,y,z)\in U\times\mathbb C$ where $z$ represents the class of the vector field 
$\partial_x+z\partial_y$; the contact structure is therefore given by $dy-zdx=0$.
Each geodesic on $U$ lifts uniquely as a Legendrian curve on $\mathbb P(T_U)$, 
forming a foliation $\mathcal G$ on $\mathbb P(T_U)$. A second Legendrian foliation $\mathcal F$
is defined by fibers of the canonical projection $\pi:\mathbb P(T_U)\to U$.
The two foliations $\mathcal F$ and $\mathcal G$ are transversal, spanning the contact distribution.
Duality arises when we permute of the role of these two foliations.
The space of $\mathcal F$-leaves is the open set $U$; if $U$ is small enough, 
then the space of $\mathcal G$-leaves is also a surface $U^\vee$. 
However, $U^\vee$ is ``semi-global'' in the sense that it contains (projections of) 
$\mathbb P^1$-fibers of $\pi$. If $U$ is a small ball, then it is convex, and we deduce
that any two $\mathbb P^1$-fibers are connected by a unique geodesic ($\mathcal G$-leaf) on $\mathbb P(T_U)$,
i.e. intersect once on $U^\vee$. Finally, we get a $2$-dimensional family (parametrized by $U$)
of rational curves on $U^\vee$ with normal bundle $\mathcal O_{\mathbb P^1}(1)$.
Note that $\PP(T_U)\subset \Pp(T_{U^\vee})$ as contact $3$-manifolds.

In fact, we do not need to have a metric for the construction, but only a collection of curves
on $U$ having the property that there is exactly one such curve passing through a given point
with a given direction. This is what Cartan calls a 
{\it projective structure}, not to be confused with the homonym notion of manifolds locally modelled on $\mathbb P^n$ (see \cite{Dumas,KO}). In coordinates $(x,y)\in U$, such a family
of curves is defined as the graph-solutions to a given differential equation of the form
\begin{equation}\label{eq:ProjStructDiffEq}
y''=A(x,y)+B(x,y)(y')+C(x,y)(y')^2+D(x,y)(y')^3
\end{equation}
with $A,B,C,D$ holomorphic on $U$. Then the geodesic foliation $\mathcal G$
is defined by the trajectories of the vector field 
$$\partial_x+z\partial_y+\left[A+Bz+Cz^2+Dz^3\right]\partial_z.$$
Since it is second order, we know by Cauchy-Kowalevski Theorem
that there is a unique solution curve passing through each point and any non vertical 
direction. That the second-hand is cubic is exactly what we need to insure
the existence and unicity for vertical directions. In a more intrinsec way, 
we can define a projective structure by an affine connection, i.e. 
a (linear) connection $\nabla:T_U\to T_U\otimes\Omega^1_U$ on the tangent bundle.
Then, $\nabla$-geodesics are parametrized curves $\gamma(t)$ on $U$ such that, 
after lifting to $T_U$ as $(\gamma(t),\dot\gamma(t))$, they are in the kernel of $\nabla$.
All projective structures come from an affine connection, but there are many affine connections 
giving rise to the same projective structure: the collection of curves is the same, but with different parametrizations. An example is the Levi-Civita connection associated to a Riemannian metric
and this is the way to see the Riemannian case as a special case of projective structure.
We note that a general projective connection does not come from a Riemannian metric,
see \cite{BDE}.

A nice fact is that the duality construction can be reversed. Given a rational curve $\mathbb P^1\simeq C_0\subset S$ in a surface, having normal bundle $\mathcal O_{C_0}(1)$, 
then Kodaira Deformation Theory tells us that the curve $C_0$ can be locally deformed as a smooth $2$-parameter family
$C_\epsilon$ of curves, likely as for a line in $\mathbb P^2$. We can lift this family as a Legendrian foliation $\mathcal F$
defined on some tube $V\subset\mathbb P(T_{U^\vee})$ and take the quotient: we get a map $\pi:V\to U$
onto the parameter space of the family. Then fibers of $\pi^\vee:\mathbb P(T_{U^\vee})\to U^\vee$
project to the collection of geodesics for a projective structure on $U$.
We thus get a {\it one-to-one correspondance between projectives structures at $(\mathbb C^2,0)$ up
to local holomorphic diffeomorphisms and germs of $(+1)$-neighborhoods $(U^\vee,C_0)$ of $C_0\simeq\mathbb P^1$
up to holomorphic isomorphisms} (see Le Brun's thesis \cite{LeBrun} for many details).

Section \ref{Sec:duality} recalls in more details this duality picture following Arnold's book \cite{Arnold}, 
Le Brun's thesis \cite{LeBrun} and Hitchin's paper \cite{Hitchin}. In particular, the euclidean (or {\bf trivial})
structure by lines, defined by the second order differential equation $y''=0$, corresponds to 
the {\bf linear} neighborhood of the zero section $C_0$ in the total space of $\mathcal O_{C_0}(1)$,
or equivalently of a line in $\mathbb P^2$. But as we shall see, the moduli space of projective structures
up to local isomorphisms has infinite dimension.

We recall in section \ref{ssec:criteriaLin} some criteria of triviality/linearizability.
The neighborhood of a rational curve $C_0$ in a projective surface $S$ is always linear
(see Proposition \ref{prop:ProjectiveSurface}). As shown by Arnol'd, if the local deformations 
of $C_0$ are the geodesics of a projective structure on $U^\vee$, then we are again in the linear case.
In fact, in the non linear case, it is shown in Proposition \ref{prop:2pencils} that deformations 
$C_\epsilon$ of $C_0$ passing through a general point 
$p$ of $(U^\vee,C_0)$ are only defined for $\epsilon$ close to $0$: there is no local {\bf pencil} of smooth analytic curves
through $p$ that contains the germs $C_\epsilon$ at $p$. We show in Proposition \ref{prop:2pencils}
that, in the non linear case, there is at most one point $p$ where we get such pencil.

Going back to real analytic metrics, the three geometries of Klein, considering metrics
of constant curvature, give birth to the same (real) projective structure, namely the trivial one. 
Indeed, geodesics of the unit $2$-sphere $\mathbb S^2\subset\mathbb R^3$ are defined as intersections
with planes passing through the origin: they project on lines, from the radial projection to a general affine plane.
Similarly, for negative curvature, geodesics are lines in Klein model. It would be nice to understand
{\it which $(+1)$-neighborhoods $(U^\vee,C_0)$ come from the geosedics of a holomorphic metric}. 

In section \ref{Sec:FlatFibration}, we introduce the notion of {\bf foliated projective structure}, when 
the projective structure is defined by a flat affine connection $\nabla$, i.e. satisfying $\nabla\cdot\nabla=0$.
Equivalently, the collection of geodesics decomposes as a pencil of geodesic foliations. 
On the dual picture $(U^\vee,C_0)$, such a decomposition corresponds to an analytic {\bf fibration}
transversal to $C_0$, i.e. a holomorphic retraction $U^\vee\to C_0$. This dictionary appear
in Kry\'nski \cite{Krynski}. Our main result, announced in \cite{CRAS}, is that non linear 
$(+1)$-neighborhoods $(U^\vee,C_0)$ have $0$, $1$ or $2$ transverse fibrations, no more
(see Theorem \ref{3-fibrations}). We show that each case occur with an infinite dimensional 
moduli space. 

The main ingredient to study the existence and unicity of transverse fibrations is the classification
of $(+1)$-neighborhoods up to analytic isomorphisms, which is due to Mishustin \cite{Mishustin} (section \ref{sec:ClassifMishNeigh}). 
It was known since the work of 
Grauert \cite{Grauert} that there are infinitely many obstructions to linearize such a neighborhood.
 Hurtubise and Kamran \cite{Hurtubise} provided a normal form for the formal neighborhood,
and one year later, 
Mishustin showed the convergence of that normal form by a different and geometrical approach 
in \cite{Mishustin}. Precisely,
any positive neighborhood can be described as the patching of two open sets of
the linear neighborhood by a non linear cocycle, that can be reduced to an almost unique normal form
(Theorem \ref{Mishustin} and Proposition \ref{PropMishustinFreedom}).
The moduli space appears to be isomorphic to the space of convergent power series in two variables.
Hurtubise and Kamran \cite{Hurtubise} also provide explicit formulae linking these invariants 
(coefficients of the cocycle) with Cartan invariants for the equivalence problem for projective structures 
(or second order differential equations). It is quite surprising that these two works have never been quoted until our announcement \cite{CRAS}, 
although it answers a problem left opened since the celebrating works of Grauert and Kodaira. 
In Proposition \ref{PropMishustinFreedom}, we get a more precise description
of the freedom in the reduction to normal form which is necessary for our purpose,
namely the action of a $4$-dimensional linear group (see Corollary \ref{Cor:ActionLinGroup}).

From Mishustin's cocycle (and its non unicity), we see in Proposition \ref{prop:4NeighbFibr}  
that the first obstruction to the existence to a transverse fibration
arise in $5$-jet, i.e. in the $5^{\text{th}}$ infinitesimal neighborhood of the rational curve, which was
also surprising for us. Another surprising fact is the existence of many neighborhoods with two 
fibrations: we get a moduli space (Theorem \ref{thm:TangGenrericModuli}) isomorphic to the space of power series in one variable.
One remarkable example (see section \ref{tancency-along-C_0}) is given by the two-fold ramified covering 
$(U^\vee,C_0)\stackrel{2:1}{\to}(\mathbb P^1\times\mathbb P^1,\Delta)$
of a neighborhood of the diagonal 
$\Delta\subset\mathbb P^1\times\mathbb P^1$ that ramifies precisely along $\Delta$: 
the two fibrations of $\mathbb P^1\times\mathbb P^1$ lift as fibrations tangent all along $C_0$. 
This example is non linear, and in particular non algebraic (the covering can be only defined 
at the neighborhood of $\Delta$ for topological reasons). However, the field of meromorphic
functions on $(U^\vee,C_0)$ identifies with the field of rational functions on $\mathbb P^1\times\mathbb P^1$
and has transcendance dimension $2$. We expect that the general $(+1)$-neighborhood has no meromorphic
function, but we have no proof, and no example. We are able to compute the differential equation 
defining the dual projective structure, namely $y''=(xy'-y)^3$. This example is also remarkable because
it has the largest symmetry group, namely $\mathrm{SL}_2(\mathbb C)$, and this is an ingredient of the proof.

The paper is mostly a survey putting together some results that were scattered in the litterature.
There are however original results as the complete proof of Theorem \ref{3-fibrations}
annouced in \cite{CRAS}, the precise description of non unicity for Mishustin cocycle given 
in Proposition \ref{PropMishustinFreedom}, the meromorphic propagation of foliated structures
in Lemma \ref{Lem:ProlongationFolAD0} and finally the application to Painlev\'e equations 
in section \ref{sec:Painleve}. Our work is used in the recent publication \cite{DunajskiWaterhouse}.

\section{Projective structure, geodesics and duality}\label{Sec:duality}

\subsection{Second order differential equations and duality}

Let $(x,y)$ be coordinates of $\mathbb C^2$.
Given a $2^{\text{nd}}$ order differential equation 
\begin{equation}\label{eq:2ndODE}
y''=f(x,y,y')
\end{equation}
with $f(x,y,z)$ holomorphic at the neighborhood $V$ of some point $(0,0,z_0)$ say,
local solutions $y(x)$ lift as Legendrian curves for the contact structure defined by
$$\alpha=0\ \ \ \text{where}\ \ \ \alpha=dy-zdx.$$
We get two transversal Legendrian foliations on $V$. The first one $ \mathcal{F} $
is defined by the fibers of the projection $V\to U;(x,y,z)\mapsto(x,y)$.
The second one $\mathcal G$ is defined by solutions $x\mapsto(x,y(x),y'(x))$
or equivalently by trajectories of the vector field 
$$v=\partial_x+z\partial_y+f(x,y,z)\partial_z.$$
More generally, given a germ of contact $3$-manifold together with two transversal 
Legendrian foliations, the space of $\mathcal F$-leaves can be identified with an
open set $U\subset\mathbb C^2$ with coordinates $(x,y)$ and $\mathcal G$-leaves
project on $U$ as graphs of solutions of a $2^{\text{nd}}$ order differential equation
$y''=f(x,y,y')$, see \cite[Chapter 1, Section 6.F]{Arnold}.

It is now clear that the role of $\mathcal F$ and $\mathcal G$ can be 
permuted: {\it on the space $U^\vee$ of $\mathcal G$-leaves, $\mathcal F$-leaves project
to solutions of a $2^{\text{nd}}$ order differential equation $Y''=g(X,Y,Y')$}
(once we have choosen coordinates $(X,Y)\in U^\vee$). This is the duality introduced
by Cartan (see also \cite[Chapter 1, Sections 6.F, 6.G]{Arnold}).
Points on $U$ correspond to curves on $U^\vee$ and {\it vice-versa}.
We will call $V$ the incidence variety by analogy with the case of lines in $\mathbb P^2$.

For instance, lines $y=ax+b$ are solutions of the differential equation $y''=0$. Using $(X,Y)=(a,b) \in \Pd$ for coordinates of dual points, we see that foliations $\FF$ and $\GG$ given before are liftings of lines on the projective and dual plane, thus the dual  equation is also $Y''=0$.

If there is a diffeomorphism $\phi:U\to\tilde U$ sending solutions of the differential
equation to the solutions of another one $y''=\tilde f(x,y,y')$ on $\tilde U$,
then $\phi$ can be lifted to a diffeomorphism $\Phi:V\to \tilde V$ conjugating 
the pairs of Legendrian foliations: $\Phi_*\mathcal F=\tilde{\mathcal F}$ and 
$\Phi_*\mathcal G=\tilde{\mathcal G}$. We say that the two differential 
equations are {\it Cartan-equivalent} in this case.

\subsection{Projective structure and geodesics}

When the differential equation (\ref{eq:2ndODE}) is cubic in $y'$
$$y''=A(x,y)+B(x,y)(y')+C(x,y)(y')^2+D(x,y)(y')^3$$
(where $A,B,C,D$ are holomorphic functions on $U$), then the foliation $\mathcal G$
is global on $V:=\mathbb P(T_U)\simeq U\times\mathbb P^1_z$, $z=\frac{dy}{dx}$,
and transversal to the fibration $\mathcal F$
everywhere. Precisely, setting $\tilde z=\frac{1}{z}=\frac{dx}{dy}$, then 
the foliation $\mathcal G$ is defined by the two vector field
$$v=\partial_x+z\partial_y+(A+Bz+Cz^2+Dz^3)\partial_z$$
for $z$ finite, and 
$$\tilde v=\tilde z\partial_x+\partial_y-(D+C\tilde z+B\tilde z^2+A\tilde z^3)\partial_{\tilde z}$$
near $z=\infty$. 

\begin{remark}For equations $y''=f(x,y,y')$ having right-hand-side $f(x,y,y')$ polynomial with respect to $y'$, 
but higher than cubic degree, the foliation $\mathcal G$ globalizes on $U\times\mathbb C_z$ 
but transversality is violated at $z=\infty$. Indeed, the corresponding vector field 
$$\tilde v=\tilde z\partial_x+\partial_y-\tilde z^3 f\left(x,y,\frac{1}{\tilde z}\right)\partial_{\tilde z}$$
becomes meromorphic; after multiplication by a convenient power of $\tilde z$,
the vector field becomes holomorphic but tangent to $\mathcal F$ along $\tilde z=0$, and its trajectories
become singular after projection on $U$.
\end{remark}

With the previous remark, it is easy to check that any foliation $\mathcal G$ on $\mathbb P(T_U)$
which is 
\begin{itemize}
\item Legendrian, i.e. tangent to the natural contact structure ($dy-zdx=0$),
\item transversal to the projection $\mathbb P(T_U)\to U$,
\end{itemize} 
is locally defined by a vector field like above, cubic in $z$, i.e. by a second order differential equation
with $y''=A+B(y')+C(y')^2+D(y')^3$. We call {\it projective structure} such a data.
We call {\it geodesic} a curve on $U$ obtained by projection of a $\mathcal G$-leaf on $\mathbb P(T_U)$.
The following is proved by Le Brun in \cite[Section 1.3]{LeBrun}

\begin{prop}\label{prop:convexity}
If $U$ is a sufficiently small ball,
then all geodesics are properly embedded discs and we have the following properties:
\begin{itemize}
\item \emph{convexity:} through any two distinct points $p,q\in U$ passes a unique geodesic;
\item \emph{infinitesimal convexity:} through any point $p\in U$ and in any direction $l\in T_U\vert_p$
passes a unique geodesic.
\end{itemize}
We say that $U$ is geodesically convex in this case.
\end{prop}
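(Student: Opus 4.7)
The plan is to base everything on ODE existence--uniqueness applied to the two charts of vector fields $v$ and $\tilde v$ defining $\mathcal{G}$, supplemented by a uniform implicit function theorem valid on a sufficiently small ball.

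First, for infinitesimal convexity together with the claim that each geodesic is a properly embedded disc, I would argue as follows. Given $(p,l)\in\mathbb{P}(TU)$, I pick the chart (either $z$ finite or $\tilde z=1/z$ finite) in which $l$ is represented. Since $v$ (respectively $\tilde v$) is a holomorphic vector field whose $x$-component (respectively $y$-component) equals $1$, Cauchy--Kowalewski yields a unique $\mathcal{G}$-leaf through $(p,l)$, and transversality of $\mathcal{G}$ with $\mathcal{F}$ makes $\pi$ restricted to this leaf a local diffeomorphism onto a curve through $p$ with tangent $l$. After shrinking $U$ to a small enough ball, the solution $y(x;x_p,y_p,z_0)$ of the ODE is defined on a disc containing the $x$-projection of $U$ and remains single-valued, so its graph is a properly embedded disc in $U$; the near-vertical case is symmetric via $\tilde v$. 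Uniqueness at the lifted level propagates downstairs because two geodesics sharing $(p,l)$ would lift to integral curves of the same vector field through the same point.

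Second, for convexity, I fix two distinct points $p=(x_p,y_p)$ and $q=(x_q,y_q)$. Assume first $x_p\ne x_q$ and consider the evaluation $E(z_0):=Y(x_q;x_p,y_p,z_0)$, where $Y$ is the ODE solution with initial data $y_p,z_0$ at $x_p$. In the flat case $A=B=C=D=0$ one has $E(z_0)=y_p+z_0(x_q-x_p)$, so $\partial_{z_0}E=x_q-x_p\ne 0$; for general holomorphic coefficients on a small ball, $\partial_{z_0}E$ is a small perturbation of this, hence still invertible. The implicit function theorem then produces a unique slope $z_0$ with $E(z_0)=y_q$, and with it a unique geodesic through $p$ and $q$. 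When $x_p=x_q$ but $y_p\ne y_q$, the same argument runs with $\tilde v$ and evaluation in the $y$-variable, using the chart near $z=\infty$.

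The main obstacle is choosing the size of $U$ uniformly so that the two chart arguments mesh and give a common smallness threshold valid for all pairs $(p,q)$. I would handle this by a dilation $(x,y)\mapsto(\lambda x,\lambda y)$: the coefficients $A,B,C,D$ transform by positive powers of $\lambda$, so on a sufficiently small ball both cubic ODEs become arbitrarily small perturbations of $y''=0$. A single contraction/IFT estimate applied to the fixed-point formulation $z_0=(y_q-y_p-u(x_q;z_0,p))/(x_q-x_p)$ and its analogue in the other chart then produces a uniform $\epsilon$ such that $U=\{|x|,|y|<\epsilon\}$ is geodesically convex, recovering the statement of Le Brun \cite[Section 1.3]{LeBrun}.
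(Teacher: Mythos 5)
Your argument is essentially correct, but the paper does not actually prove this proposition: it simply cites Le Brun's thesis \cite[Section 1.3]{LeBrun} for the full statement and only remarks that the second item (infinitesimal convexity) follows from the Cauchy--Kowalevski theorem applied to the defining differential equation. So your treatment of infinitesimal convexity coincides with the one line the paper does give, while your proof of convexity --- the evaluation map $E(z_0)=Y(x_q;x_p,y_p,z_0)$, viewed as a perturbation of the flat case $E(z_0)=y_p+z_0(x_q-x_p)$, combined with a dilation to make the coefficients $A,B,C,D$ uniformly small --- supplies the quantitative content that the paper outsources. What your route buys is a self-contained, effective proof with an explicit smallness threshold; what the citation buys the authors is brevity. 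One point you should tighten: the dichotomy ``$x_p\neq x_q$ use $v$, $x_p=x_q$ use $\tilde v$'' is not the right split for uniform estimates, because when $x_p\neq x_q$ but $|y_q-y_p|\gg|x_q-x_p|$ the candidate slope $z_0$ is large and the cubic term $D z_0^3$ destroys the contraction estimate in the $z$-chart; you should instead branch on whether the secant slope $(y_q-y_p)/(x_q-x_p)$ lies in $\{|z|\le 1\}$ or in $\{|\tilde z|\le 1\}$, running the identical fixed-point argument in whichever affine chart of $\mathbb{P}^1$ keeps the slope bounded. With that adjustment (and the analogous chart-switching for directions $l$ near the vertical in the properly-embedded-disc claim), the uniformity over all pairs $(p,q)$ and all directions is genuine and the proof is complete.
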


The second item just follows from Cauchy-Kowalevski Theorem for the differential equation 
defining the projective structure.

\subsection{Space of geodesics and duality}

It is proved in \cite[Section 1.4]{LeBrun} the following

\begin{prop}If $U$ is geodesically convex, then the space of geodesics,
i.e. the quotient space 
$$U^\vee:=\mathbb P(T_U)/\mathcal G$$
is a smooth complex surface. Moreover, the projection map
$$\pi^\vee:\mathbb P(T_U)\to U^\vee$$
restricts to fibers of $\pi:\mathbb P(T_U)\to U$ as an embedding.
\end{prop}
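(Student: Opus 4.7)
The plan is to build a complex manifold structure on $U^*$ by patching local charts coming from transversals to $\GG$, and then to exploit the two convexity statements of Proposition~\ref{prop:convexity} to verify the Hausdorff property. At any $\xi\in\Pp(TU)$, $\GG$ is a $1$-dimensional holomorphic foliation, so for any small disc $D\subset U$ through $\pi(\xi)$ whose tangent at $\pi(\xi)$ differs from the projection $d\pi(T_\xi\GG)$, the $2$-dimensional submanifold $\Sigma:=\pi^{-1}(D)\subset\Pp(TU)$ is transversal to $\GG$ at $\xi$ and meets each nearby $\GG$-leaf in a single point. This identifies a neighborhood of the leaf class $[\xi]\in U^*$ with a neighborhood of $\xi$ in $\Sigma$. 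Transition functions between two such charts are given by the holonomy of $\GG$ and are therefore holomorphic, so $U^*$ is locally a complex $2$-manifold.

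The heart of the proof is to check that $U^*$ is Hausdorff, equivalently that the relation $R\subset\Pp(TU)\times\Pp(TU)$ of lying on the same $\GG$-leaf is closed. Suppose $(\xi_n,\eta_n)\in R$ with $\xi_n\to\xi$ and $\eta_n\to\eta$; let $\gamma_n$ denote the geodesic whose Legendrian lift contains both $\xi_n$ and $\eta_n$, and write $p_n=\pi(\xi_n)$, $q_n=\pi(\eta_n)$, $p=\pi(\xi)$, $q=\pi(\eta)$. If $p\neq q$, then $p_n\neq q_n$ for $n$ large and convexity identifies $\gamma_n$ as the unique geodesic joining $p_n$ and $q_n$; continuity of the Cauchy--Kowalevski solution in its parameters gives $\gamma_n\to\gamma$, the unique geodesic through $(p,q)$, whose Legendrian lift contains both $\xi$ and $\eta$. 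If $p=q$, then either $p_n=q_n$ infinitely often---in which case infinitesimal convexity forces $\xi_n=\eta_n$ and hence $\xi=\eta$---or $p_n\neq q_n$ for all large $n$; writing $\xi=(p,\ell)$ and $\eta=(p,m)$, infinitesimal convexity makes $\gamma_n$ converge to the geodesic $\gamma$ through $p$ tangent to $\ell$, while the tangent direction of $\gamma_n$ at the nearby point $q_n$ equals $m_n\to m$ and, by continuity, also tends to the tangent of $\gamma$ at $p$, namely $\ell$. So $\ell=m$, $\xi=\eta$, and $(\xi,\eta)\in R$.

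For the final statement, fix $p\in U$ and set $F:=\pi^{-1}(p)\cong\Pp^1$. The restriction $\pi^*|_F$ sends a direction $\ell$ at $p$ to the geodesic through $p$ in direction $\ell$; it is injective by infinitesimal convexity and an immersion because $F$ is transversal to $\GG$. Since $F$ is compact, $U^*$ is Hausdorff by the previous step, and the map is an injective holomorphic immersion, this upgrades to a closed embedding. The main obstacle is the subcase $p=q$, $p_n\neq q_n$ of the Hausdorff argument: one must keep track not only of the geodesics $\gamma_n$ but also of their tangent directions at two distinct nearby points, and use continuity of these tangents to convert the global convexity hypothesis into the desired separation statement on the leaf space.
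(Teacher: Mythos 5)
The paper does not actually prove this proposition: it is quoted from Section~1.4 of LeBrun's thesis \cite{LeBrun}, so there is no in-paper argument to compare yours against. Your architecture --- charts on the leaf space from transversals to $\mathcal{G}$, Hausdorffness via closedness of the incidence relation $R$, and ``injective holomorphic immersion of a compact curve into a Hausdorff surface'' for the embedding of the fiber --- is the standard route and is the one the cited source follows, so the overall approach is right.

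Two steps, however, carry more weight than your wording acknowledges. First, in the case $p\neq q$ you deduce $\gamma_n\to\gamma$ from ``continuity of the Cauchy--Kowalevski solution in its parameters''; but Cauchy--Kowalevski controls the \emph{initial value} problem (point and direction), whereas the geodesic through two prescribed points is a \emph{boundary value} problem. Its continuous dependence on $(p_n,q_n)$ requires inverting the exponential-type map $(p,\ell)\mapsto(p,q)$ by the implicit function theorem, and this non-degeneracy is part of what must be secured when $U$ is shrunk; it is not a formal consequence of the uniqueness statements in Proposition~\ref{prop:convexity}. Second, and more seriously, in both the case $p\neq q$ and the subcase $p=q$, $p_n\neq q_n$, you tacitly assume that $\eta_n$ lies on the leaf through $\xi_n$ at uniformly bounded leaf-distance, so that convergence of leaves on compact pieces applies and the tangent direction at $q_n$ converges to the tangent of the limit geodesic. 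Properness of each individual geodesic forbids a single leaf from accumulating on itself, but neither it nor uniqueness of the joining geodesic forbids, a priori, the geodesic through $p_n$ and the nearby point $q_n$ from being a long ``detour'' whose tangent at $q_n$ is unrelated to $\ell$. Excluding this needs the uniform quantitative control available on a sufficiently small ball (all geodesics uniformly $C^1$-close to lines), or an equivalent compactness argument for the family of leaf segments; the same point is what justifies your assertion that a small transversal $\Sigma$ meets each nearby leaf only once. With these two points supplied the proof closes up; note also that injectivity of $\pi^*$ on the fiber over $p$ uses embeddedness of geodesics (a geodesic cannot pass through $p$ with two distinct tangents), not only infinitesimal convexity.
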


We thus get a two-parameter family (parametrized by $U$) of smooth rational curves 
covering the surface $U^\vee$: for each point $p\in U$, we get a curve $C_p\subset U^\vee$, namely $\pi^\vee(\pi^{-1}(p))$. 
The curve $C_p$ parametrizes in $U^\vee$ the set (pencil) of geodesics passing through $p$.
Any two curves $C_p$ and $C_q$, with $p\not=q$, intersect transversely through a single point in $U^\vee$
representing the (unique) geodesic passing through $p$ and $q$.
The normal bundle of any such curve $C_p$
is in fact $\mathcal O_{\mathbb P^1}(1)$ (after identification $C_p\simeq \mathbb P^1$).

One might think that rational curves define the geodesics of a projective structure on $U^\vee$,
but it is almost never true: for instance, the set of rational curves (of the family $C_p$)
through a given point of $U^\vee$ cannot be completed as a pencil of curves
(as it would be for geodesics of a projective structure), see \cite[Chapter 1, Section 6-D]{Arnold}. 
In fact, we will prove (see Proposition \ref{prop:2pencils}) that, if such a pencil exists at two different points of $U^\vee$,
then we are essentially in the standard linear case of lines in $\mathbb P^2$.

\begin{figure}[ht!]
  \centering
    \includegraphics[scale=0.5]{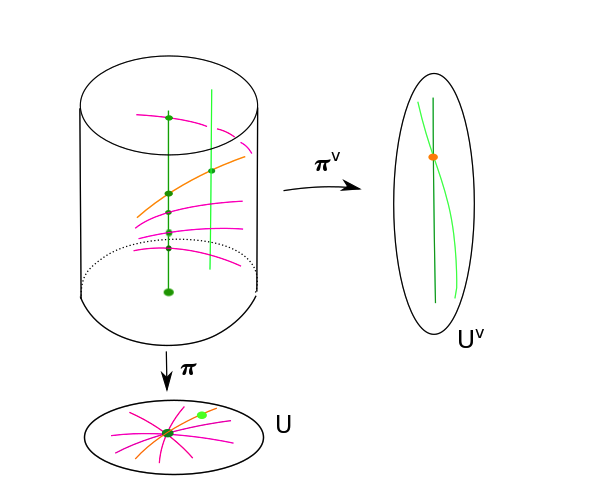}
  \caption{Duality picture}
  \label{fig:bundlepicture}
\end{figure}

From a germ of projective structure at $p\in U$, we can deduce a germ of surface
neighborhood of $C_p\simeq\mathbb P^1$. Conversely, it is proved in 
\cite[Section 1.7]{LeBrun} that we can reverse the construction. Indeed, given a rational
curve $C\subset S$ in a surface (everything smooth holomorphic) having normal bundle 
$\mathcal O_{\mathbb P^1}(1)$, then $C$ admits by Kodaira Deformation Theory
a local $2$-parameter family of deformation and the parameter space $U$
is naturally equipped with a projective structure: geodesics on $U$ are those
rational curves passing to a common point in $S$.

In the sequel, we call $(+1)$-neighborhood of a rational curve $C$
a germ $(S,C)$ of a smooth complex surface $S$ where $C$ is embedded
with normal bundle $N_C\simeq \mathcal O_C(+1)$.

\begin{thm}[Le Brun] We have a one-to-one correspondance between germs
of projective structures on $(\mathbb C^2,0)$ up to diffeomorphism and 
germs of $(+1)$-neighborhood of $\mathbb P^1$ up to isomorphism.
\end{thm}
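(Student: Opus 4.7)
The plan is to produce the bijection by passing through a common intermediate structure: a germ of contact $3$-manifold equipped with a pair of transversal Legendrian foliations $(\mathcal F,\mathcal G)$. The discussion of Section~\ref{Sec:duality} already shows how a projective structure at $0\in U\subset\mathbb C^2$ produces such a triple $(\mathbb P(TU),\mathcal F,\mathcal G)$, and how, by Proposition~\ref{prop:convexity}, the quotient $U^*:=\mathbb P(TU)/\mathcal G$ is a $(+1)$-neighborhood of each distinguished curve $C_p:=\pi^*(\pi^{-1}(p))$. What remains is to build the reverse map from $(+1)$-neighborhoods to projective structures, and then to verify that the two constructions are mutually inverse and natural with respect to the prescribed equivalences.

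For the reverse direction, start with $(S,C_0)$. Since $H^1(C_0,NC_0)=H^1(\mathbb P^1,\mathcal O(1))=0$ and $h^0(C_0,NC_0)=2$, Kodaira Deformation Theory provides an unobstructed complete $2$-parameter family $\{C_p\}_{p\in U}$ of smooth rational curves in $S$, all with $\mathcal O(1)$ normal bundle, parametrized by a germ $(U,p_0)$ with $C_{p_0}=C_0$ and a canonical identification $T_pU\simeq H^0(C_p,NC_p)$. The central observation is then: a nonzero section of $\mathcal O_{\mathbb P^1}(1)$ has exactly one zero, so sending a direction $[v]\in\mathbb P(T_pU)$ to the zero on $C_p$ of the corresponding section of $NC_p$ yields a canonical biholomorphism $\mathbb P(T_pU)\stackrel{\sim}{\to}C_p$. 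Globally this furnishes an identification of $\mathbb P(TU)$ with the incidence variety
$$V:=\{(p,q)\in U\times S\ ;\ q\in C_p\},$$
so that the two projections become $\pi:V\to U$, with fibers $\mathbb P(T_pU)\simeq C_p$, and $\pi^*:V\to S$. One checks that the foliation $\mathcal G$ by $\pi^*$-fibers is Legendrian for the natural contact structure on $\mathbb P(TU)$ and transversal to the vertical foliation $\mathcal F$ by $\pi$-fibers, hence defines a projective structure on $U$ in the sense of Section~\ref{Sec:duality}.

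Inverseness and naturality are now read off on $V$. Starting from $(S,C_0)$, the construction identifies $\mathcal G$-leaves with $\pi^*$-fibers, so $V/\mathcal G$ recovers $(S,C_0)$ canonically together with the curves $C_p$. Starting instead from a projective structure on $U$, the family $\{C_p\}_{p\in U}$ in $U^*$ is a $2$-parameter family of rational curves with normal bundle $\mathcal O(1)$, and the map $T_pU\to H^0(C_p,NC_p)$ is an isomorphism (again through the identification $\mathbb P(T_pU)\simeq C_p$), so the family coincides near $p_0$ with the Kodaira family of deformations of $C_{p_0}$ in $U^*$; the induced projective structure on $U$ is the original one because in both descriptions the geodesic through $p$ pointing at $q\in C_p$ is the curve $\{p'\in U\,;\,q\in C_{p'}\}$. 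Naturality with respect to local diffeomorphisms on the projective side and holomorphic isomorphisms on the neighborhood side is then immediate from the intrinsic character of the incidence triple $(V,\mathcal F,\mathcal G)$. The main technical point is the construction of the canonical biholomorphism $\mathbb P(TU)\simeq V$ together with the verification that under this identification $\pi^*$-fibers are Legendrian and transverse to $\mathcal F$; both reduce to the basic fact that a global section of $\mathcal O_{\mathbb P^1}(1)$ is determined by its single zero, together with a routine infinitesimal computation along $C_0$.
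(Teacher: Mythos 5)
Your proposal follows the same route as the paper, which assembles the theorem from Le Brun's constructions: the forward map via the space of geodesics $U^*=\mathbb P(TU)/\mathcal G$ (Propositions in Section \ref{Sec:duality}, citing \cite[Sections 1.3--1.4]{LeBrun}) and the reverse map via the Kodaira family of deformations together with the identification $\mathbb P(T_pU)\simeq C_p$ through the single zero of a section of $NC_p\simeq\mathcal O_{\mathbb P^1}(1)$ (\cite[Section 1.7]{LeBrun}). The argument is correct and in fact spells out the inverseness check on the incidence variety in more detail than the paper, which defers it to the cited reference.
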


More details (and generalizations) can be found in \cite{LeBrun} (see also \cite{Hitchin0}).

\subsection{Affine connections, metric}
Let $S$ be a smooth complex surface. An \textit{affine connection} on $S$ is 
a (linear) holomorphic connection on the tangent bundle $T_S$, i.e. 
a $\mathbb{C}$-linear morphism $\nabla : T_S \rightarrow T_S \otimes \Omega^1_S$ satisfying the Leibnitz rule 
$$
\nabla(f\cdot Z)= Z \otimes df + f\cdot \nabla(Z)
$$
for any holomorphic function $f$ and any vector field $Z$. Given a two vector fields $Z,W$,
we denote as usual by $\nabla_W Z:=i_W(\nabla Z)$ the contraction of $\nabla Z$ with $W$.
By a \textit{parametrized geodesic} for $\nabla$, we mean a holomorphic curve $t\mapsto\gamma(t)$ on $S$ 
such that $\nabla_{\dot\gamma(t)}\dot\gamma(t) =0$ on the curve. The image of $\gamma(t)$ on $S$ 
is simply called 
a \textit{ (unparametrized) geodesic} and is characterized by $\nabla_{\dot\gamma(t)}\dot\gamma(t) =f(t)\dot\gamma(t)$ for any parametrization. Geodesics define a projective structure $\Pi_\nabla$ on $S$.

In coordinates $(x,y)\in U\subset\mathbb C^2$, a trivialization of $TU$ is given by the basis $(\partial_x,\partial_y)$
and the affine connection is given by 
$$\nabla(Z)=d(Z)+\Omega\cdot Z,\ \ \ \Omega = \left( \begin{matrix}
\alpha & \beta\\
\gamma & \delta
\end{matrix} \right)$$
where $Z=z_1\partial_x+z_2\partial_y$ and $\alpha,\beta,\gamma,\delta\in\Omega^1(U)$.
On the projectivized bundle $\mathbb P(T_U)$, with trivializing coordinate $z=z_2/z_1$,
equation $\nabla=0$ induces a \textit{Riccati distribution}
\begin{equation}\label{eq:RiccatiDistrib}
\ker(dz+\gamma-(\alpha-\delta)z-\beta z^2).
\end{equation}
Intersection with the contact structure $\ker(dy-zdx)$ gives the geodesic foliation $\mathcal G$
of the projective structure. Precisely, if we set
\begin{equation}\label{eq:Expandxdy}
\left( \begin{matrix}
\alpha & \beta\\
\gamma & \delta
\end{matrix} \right)
=
\left( \begin{matrix}
\alpha_1 & \beta_1\\
\gamma_1 & \delta_1
\end{matrix} \right)dx
+
\left( \begin{matrix}
\alpha_2 & \beta_2\\
\gamma_2 & \delta_2
\end{matrix} \right)dy
\end{equation}
(with $\alpha_i,\beta_i,\gamma_i,\delta_i\in\mathcal O(U)$) then the projective structure is given by substituting
(\ref{eq:Expandxdy}) and $z=\frac{dy}{dx}$ into (\ref{eq:RiccatiDistrib}), namely
\begin{equation}\label{eq:FromRiccToProj}
\frac{d^2y}{dx^2}=\underbrace{-\gamma_1}_A+\underbrace{(\alpha_1-\delta_1-\gamma_2)}_B\left(\frac{dy}{dx}\right)+\underbrace{(\beta_1+\alpha_2-\delta_2)}_C\left(\frac{dy}{dx}\right)^2+ \underbrace{\beta_2}_D \left(\frac{dy}{dx}\right)^3.
\end{equation}
We say that two affine connections are (projectively) equivalent if they have the same family of geodesics,
i.e. if they define the same projective structure. The following is straightforward

\begin{lemma}\label{affine-connections-defining-the-same-projective}
Two affine connections $\nabla$ and $\nabla'$ on $U$, with matrices $\Omega$ and $\Omega'$ respectively, define the same projective structure if, and only if, there are $a,b,c,d \in \OO(U)$ such that 
$$
\Omega' = \Omega + a \left(\begin{matrix} 
dx /2 & 0 \\
dy & -dx /2
\end{matrix} \right)
 + b \left(\begin{matrix} -dy /2 & dx \\
0 & dy /2\end{matrix} \right) + c \left(\begin{matrix} dx  & 0 \\
0 & dx \end{matrix} \right) + d \left(\begin{matrix} dy  & 0 \\
0 & dy\end{matrix} \right).
$$
\end{lemma}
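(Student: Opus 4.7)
The plan is to recast the lemma as the description of the kernel of an $\OO(U)$-linear map. The display just before the statement shows that the projective structure associated to $\nabla$ depends only on the four combinations
$$A=-\gamma_1,\quad B=\alpha_1-\delta_1-\gamma_2,\quad C=\beta_1+\alpha_2-\delta_2,\quad D=\beta_2$$
of the eight entries $\alpha_i,\beta_i,\gamma_i,\delta_i$ of $\Omega$. This defines a canonical $\OO(U)$-linear map $\Psi:\Omega\mapsto(A,B,C,D)$ from the rank-$8$ module of matrix-valued $1$-forms to $\OO(U)^4$, and two connections produce the same projective structure iff $\Omega'-\Omega\in\ker\Psi$. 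So the whole lemma reduces to showing that the four matrices displayed in the statement form an $\OO(U)$-basis of $\ker\Psi$.

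First I would settle the easy implication $(\Leftarrow)$ by substituting each of the four correction matrices, one at a time with a generic coefficient $a,b,c,d\in\OO(U)$, into the formulas above and verifying that $(A,B,C,D)=(0,0,0,0)$. For instance the first matrix contributes $\alpha_1=a/2$, $\gamma_2=a$, $\delta_1=-a/2$ and the remaining entries vanish, giving $B=a/2+a/2-a=0$ while $A,C,D$ are obviously zero; the three remaining matrices are handled analogously. For the converse $(\Rightarrow)$, I would note that $\Psi$ is manifestly surjective (given $(A,B,C,D)$, take $\gamma_1=-A$, $\alpha_1=B$, $\alpha_2=C$, $\beta_2=D$ and set the remaining four entries to $0$), so $\ker\Psi$ is a free $\OO(U)$-module of rank exactly $4$. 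It then suffices to check that the four matrices of the statement are $\OO(U)$-linearly independent: reading off the $\gamma_2$, $\beta_1$, $\delta_1$, $\delta_2$ entries of a hypothetical $\OO(U)$-linear relation forces successively $a=0$, $b=0$, $c=0$, $d=0$. Four independent elements of a rank-$4$ free module form a basis, which is precisely the conclusion.

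No real obstacle arises: once the problem has been translated into a statement about the kernel of an explicit $8\to 4$ linear map of $\OO(U)$-modules, the argument is forced. The only genuinely informative step is the surjectivity of $\Psi$, which ensures that the kernel has the expected rank $4$ and not something larger; without this the exhibition of four elements would only prove the easy direction.
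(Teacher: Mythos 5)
Your reduction of the lemma to computing the kernel of the $\OO(U)$-linear map $\Psi:\Omega\mapsto(A,B,C,D)$, with $A=-\gamma_1$, $B=\alpha_1-\delta_1-\gamma_2$, $C=\beta_1+\alpha_2-\delta_2$, $D=\beta_2$, is exactly the right framing (the paper gives no proof, declaring the lemma straightforward), and your verification that the four displayed matrices lie in $\ker\Psi$ and are $\OO(U)$-linearly independent is correct. The gap is in your final step: over a commutative ring that is not a field, $n$ linearly independent elements of a free module of rank $n$ need \emph{not} generate it. In $\mathbb{Z}^4$ the vectors $2e_1,\dots,2e_4$ are independent but not a basis, and the same phenomenon occurs over $\OO(U)$ (take independent elements whose coefficient determinant is a nonconstant holomorphic function: it is a non-zero-divisor but not a unit). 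Independence only tells you the relevant determinant is a non-zero-divisor; to conclude spanning you need it to be a unit. As written, your argument therefore establishes only the easy implication of the lemma.

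The repair is short and should be made explicit. Since $\Psi$ reads off $\gamma_1$ and $\beta_2$ directly and determines $\alpha_1$ and $\beta_1$ from the remaining entries, $\ker\Psi$ is free of rank $4$, parametrized by $(\alpha_2,\gamma_2,\delta_1,\delta_2)\in\OO(U)^4$ via $\gamma_1=\beta_2=0$, $\alpha_1=\delta_1+\gamma_2$, $\beta_1=\delta_2-\alpha_2$. Given such an element, solve for the coefficients of your four matrices entry by entry: the $\gamma_2$-entry forces $a=\gamma_2$, the $\beta_1$-entry forces $b=\delta_2-\alpha_2$, the $\delta_1$- and $\delta_2$-entries then give $c=\delta_1+\tfrac{1}{2}\gamma_2$ and $d=\tfrac{1}{2}(\delta_2+\alpha_2)$, and the two remaining entries $\alpha_1=\tfrac{a}{2}+c$ and $\alpha_2=-\tfrac{b}{2}+d$ are matched automatically. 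Equivalently, the change-of-basis matrix between your four matrices and this standard basis of $\ker\Psi$ has constant rational entries and nonzero determinant, hence is invertible over $\OO(U)$. With this one additional computation the proof is complete.
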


\begin{remark}Any projective connection $\Pi : y'' = A + B(y') + C(y')^2 +D(y')^3$
can be defined by an affine connection: for instance, $\Phi=\Pi_\nabla$ with
$$\nabla=d+\left( \begin{matrix}
0 & Cdx+Ddy\\
-Adx-Bdy & 0
\end{matrix} \right)$$
or is equivalently defined by the Riccati distribution
$$ dz+Adx+Bdy+z^2(Cdx+Ddy)=0.$$

\end{remark}

One can also define a projective structure by a holomorphic Riemannian metric, 
by considering its geodesics defined by Levi-Civita (affine) connection.
But it is not true that all projective structures come from a metric:
in \cite{BDE}, it is proved that there are infinitely many obstructions, the first one
arising at order $5$.

\begin{question}Can we characterize in a geometric way those projective structures
arising from a holomorphic metric ? And what about the corresponding $(+1)$-neighborhood ?
\end{question}

\subsection{Some criteria of linearization}\label{ssec:criteriaLin}

A projective structure $(U,\Pi)$ is said {\it linearizable} if it is Cartan-equivalent
to the standard linear structure whose geodesics are lines: there is a diffeomorphism
$$\Phi:U\to V\subset\mathbb P^2$$
such that geodesics on $U$ are pull-back of lines in $\mathbb P^2$.
When $U$ is geodesically convex, this is equivalent to say that $(U^\vee,C_0)$ 
is the neighborhood of a line in $\mathbb P^2$.
As we shall see later, there are many projective structures that are non linearizable (even locally).
Here follow some criteria of local linearizability.

\begin{prop}\label{prop:2pencils}
Let $\Pi$ be a germ of projective structure at $(\mathbb C^2,0)$ and let 
$(U^\vee,C_0)$ be the corresponding $(+1)$-neighborhood. If for $2$ distinct points
$p_1,p_2\in C_0$ the family of rational curves through $p_i$ is contained in a pencil
of curves based in $p_i$, then $(U^\vee,C_0)\simeq(\mathbb P^2,\text{line})$ (and $\Pi$ is 
linearizable).
\end{prop}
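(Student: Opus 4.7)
\medskip
\noindent\emph{Proof plan.} The plan is to construct from the two pencils a birational map $F:(U^*,C_0)\dashrightarrow\mathbb{P}^1\times\mathbb{P}^1$ and then to identify $(U^*,C_0)$ with $(\mathbb{P}^2,\text{line})$ via the classical birational modification relating $\mathbb{P}^1\times\mathbb{P}^1$ and $\mathbb{P}^2$ (blow up a point, blow down the two rulings through it).

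First I would construct the two components of $F$. For $i=1,2$, since $p_i\in C_0$ the dual geodesic $\gamma_{p_i}\subset U$ passes through $0$, and the family of rational curves of $U^*$ through $p_i$ is precisely the $1$-parameter family $\{C_q\}_{q\in\gamma_{p_i}}$. By hypothesis this $1$-parameter family already exhausts a germ of holomorphic pencil at $p_i$, yielding a meromorphic function $f_i:(U^*,C_0)\dashrightarrow\gamma_{p_i}\simeq\mathbb{P}^1$ whose generic fibers are the $C_q$ and whose sole indeterminacy is $p_i$. Away from $p_i$, this $f_i$ agrees with the global formula $f_i(x)=\gamma_x\cap\gamma_{p_i}$, a single well-defined point by the convexity of $U$ (Proposition~\ref{prop:convexity}).

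Next I would analyze $F=(f_1,f_2):(U^*,C_0)\dashrightarrow\mathbb{P}^1\times\mathbb{P}^1$. For generic $(q_1,q_2)\in\gamma_{p_1}\times\gamma_{p_2}$ with $q_1\neq q_2$, the fiber $F^{-1}(q_1,q_2)$ is the unique point of $U^*$ dual to the unique geodesic of $U$ through $q_1$ and $q_2$, so $F$ is birational. Since $0\in\gamma_{p_1}\cap\gamma_{p_2}$, the whole curve $C_0$ is contracted by $F$ to the point $(0,0)$, and the indeterminacy locus of $F$ is exactly $\{p_1,p_2\}$. Blowing up $p_1$ and $p_2$ produces a surface $\widetilde{U^*}$ on which $F$ lifts to a morphism $\tilde F$, and the strict transform $\tilde C_0$ has self-intersection $C_0^2-2=-1$. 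Then $\tilde F$ contracts exactly $\tilde C_0$ to $(0,0)$, so by Castelnuovo's criterion $\tilde F$ identifies $\widetilde{U^*}$ with the blow-up of $\mathbb{P}^1\times\mathbb{P}^1$ at $(0,0)$ with $\tilde C_0$ as exceptional divisor; under this identification, the exceptional divisors $E_1,E_2\subset\widetilde{U^*}$ become the strict transforms of the two rulings through $(0,0)$, each of self-intersection $-1$.

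Finally, contracting the $(-1)$-curves $E_1,E_2$ inside $\widetilde{U^*}$ recovers $U^*$; applied instead to the blow-up of $\mathbb{P}^1\times\mathbb{P}^1$ at $(0,0)$, the same two contractions produce $\mathbb{P}^2$ by the classical birational equivalence, with $\tilde C_0$ becoming the line $\ell$ through the two blow-down centers $p_1',p_2'\in\mathbb{P}^2$. This yields the desired germ isomorphism $(U^*,C_0)\simeq(\mathbb{P}^2,\ell)$, and so $\Pi$ is linearizable. The delicate step, I expect, is the first: verifying that the abstract local pencil at $p_i$ really coincides with the globally defined function $x\mapsto\gamma_x\cap\gamma_{p_i}$ on a punctured neighborhood of $p_i$, and that the resulting $f_i$ is meromorphic at $p_i$ with no further indeterminacies. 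The remainder is essentially formal surface birational geometry.
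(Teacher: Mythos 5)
Your proof is correct, and it reaches the paper's conclusion by a route that is different in presentation though ultimately the same map in disguise. The paper normalizes the two pencil maps so that $C_0=\{F_1=0\}=\{F_2=0\}$ and then writes down the single explicit formula $\Phi=(1:\tfrac{1}{F_1}:\tfrac{1}{F_2})=(F_1F_2:F_2:F_1)$, checking directly that it extends holomorphically across $C_0$ and embeds a neighborhood of $C_0$ onto a neighborhood of the line $\{z_0=0\}$. Your $F=(f_1,f_2)$ followed by the elementary transformation $\mathbb P^1\times\mathbb P^1\dashrightarrow\mathbb P^2$ (blow up $(0,0)$, blow down the two rulings through it) composes to exactly that $\Phi$: the bidegree $(1,1)$ system $(uv:v:u)$ has $(0,0)$ as its unique base point and contracts precisely the two rulings, which correspond under $F$ to your exceptional curves $E_1,E_2$. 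So what the paper verifies in one step by inspecting the formula, you verify by factoring it and doing the blow-up/blow-down bookkeeping. Your version makes the geometry more transparent (why $C_0$ becomes a $+1$-curve, where the two pencils of lines in $\mathbb P^2$ come from), at the cost of having to justify that $\tilde F$ really is the blow-up followed by an open embedding -- Castelnuovo gives you the abstract contraction of $\tilde C_0$, but you still need the induced map $W\to\mathbb P^1\times\mathbb P^1$ to be injective near the contracted point (which follows from geodesic convexity exactly as you indicate) and hence biholomorphic onto its image. That residual verification is of the same nature as the paper's ``one can check that $\Phi$ extends and does not contract $C_0$'', so neither route is strictly more economical; yours generalizes more readily to situations where one only has the two fibrations and no preferred coordinates, which is in fact how the paper later argues in Theorem \ref{hoja-comun}.
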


\begin{proof}For $i=1,2$, let $F_i:U^\vee\dashrightarrow\mathbb P^1$ be the meromorphic map 
defining the pencil based at $p_i$: deformations of $C_0$ passing through $p_i$ 
are (reduced) fibers of $F_i$. We can assume $C_0=\{F_i=0\}$ for $i=1,2$.
Then, maybe shrinking $U^\vee$,  the map
$$\Phi:U^\vee\to \mathbb P^2_{(z_0:z_1:z_2)}\ ;\ p\mapsto(1:\frac{1}{F_1}:\frac{1}{F_2})$$
is an embedding of $U^\vee$ onto a neighborhood of the line $z_0=0$.
Indeed, $\Phi$ is well-defined and injective on $U^\vee\setminus C_0$;
one can check that it extends holomorphically on $C_0$ and the extension does not contract this curve.\end{proof}

\begin{cor}{\cite[Chapter 1]{Arnold}} Let $\Pi$ be a germ of projective structure at $(\mathbb C^2,0)$ and let 
$(U^\vee,C_0)$ be the corresponding $(+1)$-neighborhood. If deformations of $C_0$ are geodesics
of a projective structure $\Pi^\vee$  in a neighborhood of a point $p\in C_0$, 
then $(U^\vee,C_0)\simeq(\mathbb P^2,\text{line})$ (and $\Pi$ is 
linearizable).
\end{cor}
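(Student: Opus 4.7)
The plan is to reduce the corollary to Proposition \ref{prop:2pencils} by producing a second pencil point for free. The hypothesis provides $\Pi^*$ only near $p$, so apparently only one pencil base, but the neighborhood $V$ on which $\Pi^*$ is defined meets $C_0$ in an open arc, giving infinitely many candidates for a second point $q\in C_0$.

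The key geometric input is that a projective structure furnishes pencils of its geodesics: for any $r$ in the domain of $\Pi^*$, the geodesics through $r$ are in bijection with directions in $\mathbb P(T_rU^*)$ (infinitesimal convexity, Proposition \ref{prop:convexity}) and pairwise disjoint off $r$. Blowing up $r$, the geodesic foliation becomes a holomorphic fibration over the exceptional $\mathbb P^1$, and pulling back a rational coordinate on that $\mathbb P^1$ produces a meromorphic function $F_r:(\text{nbhd of }r)\dashrightarrow \mathbb P^1$ whose fibers through $r$ are precisely the $\Pi^*$-geodesics through $r$. This is a pencil based at $r$ in the sense required by Proposition \ref{prop:2pencils}.

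To finish, I would pick any $q\in V\cap C_0$ with $q\neq p$. By hypothesis, deformations of $C_0$ passing through $p$ (resp. $q$) are geodesics of $\Pi^*$ through $p$ (resp. $q$), hence contained in the pencil $F_p$ (resp. $F_q$). Applying Proposition \ref{prop:2pencils} with $p_1=p$, $p_2=q$ then gives $(U^*,C_0)\simeq(\mathbb P^2,\text{line})$, and $\Pi$ is linearizable. The only non-routine verification is the pencil property of a projective structure at a point, i.e. the construction of the meromorphic function $F_r$ via the blow-up; everything else is immediate from the hypothesis and the previously stated proposition.
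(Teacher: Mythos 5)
Your proposal is correct and follows essentially the same route as the paper: the paper's proof likewise observes that at every point $q$ of $C_0\cap V$ the deformations of $C_0$ through $q$ lie in a pencil (since geodesics of $\Pi^*$ through a point form a pencil), picks two such points, and invokes Proposition \ref{prop:2pencils}. Your extra justification of the pencil property via blowing up $r$ and using infinitesimal convexity is a reasonable expansion of what the paper leaves implicit.
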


\begin{prop}[{\cite[Proposition 4.7]{Hurtubise}}]\label{prop:ProjectiveSurface}
Let $S$ be a smooth compact surface with an embedded curve $C_0 \simeq \mathbb{P}^1$ with self-intersection $+1$. Then $S$ is rational and $(S,C_0) \simeq (\mathbb{P}^2, line)$.
\end{prop}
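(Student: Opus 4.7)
My plan is to verify the hypotheses of Proposition \ref{prop:2pencils} at two distinct points of $C_0$: at each such point $p$ I will exhibit a pencil of curves based at $p$ that contains all Kodaira deformations of $C_0$ through $p$. The core step --- and the one I expect to be the main obstacle --- is the vanishing $q(S)=0$, which converts the $2$-parameter Kodaira family into a $2$-dimensional \emph{linear} system on $S$.

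I first collect numerical data. The genus formula gives $K_S\cdot C_0=-3$, and since $C_0$ is an irreducible curve with $C_0^2=1>0$ that meets every other irreducible curve non-negatively, $C_0$ is nef and big. An effective divisor in the class $K_S$ (respectively $K_S-C_0$) would intersect $C_0$ non-negatively, contradicting $K_S\cdot C_0=-3$ (respectively $(K_S-C_0)\cdot C_0=-4$); hence $h^0(K_S)=h^0(K_S-C_0)=0$, and Serre duality gives $h^2(\OO_S)=h^2(\OO_S(C_0))=0$. To prove $q(S)=0$, suppose $\mathrm{Alb}(S)\ne 0$; the Albanese map $\alpha:S\to\mathrm{Alb}(S)$ has positive-dimensional image, and every deformation $C_t\simeq\mathbb{P}^1$ of $C_0$ is contracted by $\alpha$ (no non-constant map from $\mathbb{P}^1$ to an abelian variety). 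If $\dim\alpha(S)=2$, then $\alpha$ is generically finite and cannot contract a covering $2$-parameter family of curves; if $\dim\alpha(S)=1$, then $\alpha$ factors through a morphism $S\to C'$ onto a curve of genus $\ge 1$, forcing each $C_t$ into a fibre of self-intersection $0$, incompatible with $C_t^2=1$. Hence $q(S)=0$.

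Riemann--Roch then yields $\chi(\OO_S(C_0))=1+\tfrac12(1+3)=3$, so $\dim|C_0|\ge 2$; since any deformation of $C_0$ has normal bundle $\OO_{\mathbb{P}^1}(1)$ with $h^0=2$, the equality $\dim|C_0|=2$ holds and $|C_0|$ coincides with the Kodaira family (using $q=0$ to pass from algebraic to linear equivalence). For any $p\in C_0$, the sequence $0\to\OO_S(C_0-p)\to\OO_S(C_0)\to\OO_p\to 0$ then yields $\dim|C_0-p|=1$, a pencil with base point $p$ that contains every deformation of $C_0$ through $p$. Choosing two distinct points $p_1,p_2\in C_0$ and invoking Proposition \ref{prop:2pencils} produces the germ isomorphism $(S,C_0)\simeq(\PP,L_0)$. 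Rationality of $S$ then follows automatically: the map $\Phi=(1:1/F_1:1/F_2)$ from the proof of that proposition is rational on the projective surface $S$ (each $F_i$ being meromorphic, hence rational) and is a local isomorphism near $C_0$, therefore a birational map $S\dashrightarrow\PP$.
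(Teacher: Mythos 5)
Your proof is correct, but it reaches the conclusion by a genuinely different route from the paper's. The paper begins by \emph{citing} \cite[Proposition V.4.3]{BPV} to get rationality of $S$, deduces $H^1(S,\OO_S)=0$ from that, uses injectivity of the Chern-class map to find three sections $F_0,F_1,F_2$ of $\OO_S(C_0)$ vanishing on three deformations with empty triple intersection, and then analyses the morphism $\sigma=(F_0:F_1:F_2)$ directly: it has generic degree $1$, hence is a composition of blow-downs with no exceptional divisor meeting $C_0$. You instead make rationality an \emph{output}: you establish $p_g=h^2(\OO_S(C_0))=0$ from adjunction and nefness of $C_0$, prove $q=0$ by an Albanese argument (the covering two-parameter family of rational curves cannot be contracted by a generically finite map, nor fit into fibres of a fibration over a curve of positive genus, since $C_t^2=1$), get $\dim|C_0|=2$ from Riemann--Roch, cut down to pencils $|C_0-p_i|$ at two points, and invoke Proposition \ref{prop:2pencils}; rationality then follows because the map of that proposition is birational. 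The two arguments are secretly constructing the same map --- unwinding $(1:1/F_1:1/F_2)$ with $F_i=s_{C_0}/s_{D_i}$ recovers the paper's $\sigma=(s_{C_0}:s_{D_1}:s_{D_2})$ --- but yours trades the external rationality citation for a self-contained cohomological computation plus the paper's own Proposition \ref{prop:2pencils}, while the paper's version is shorter once the BPV result is granted and gives slightly more explicit global information (that $S\to\PP$ is a sequence of blow-ups away from $C_0$). One cosmetic remark: bigness of $C_0$ is never used; nefness alone gives the vanishing of $h^0(K_S)$ and $h^0(K_S-C_0)$.
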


\begin{proof}
As $S$ contains a smooth rational curve with positive self-intersection, we deduce from \cite[Proposition V.4.3]{BPV} that $S$ is rational. This implies $H^1(S, \OO_S) \simeq H^{0,1}(S) \simeq H^{1,0}(S) \simeq H^0(S, \Omega^1_S)=0$ thus the Chern-class morphism $H^1(S, \OO^*_S) \rightarrow H^2(S, \mathbb{Z})$ is injective. We can take deformations $C_1$, $C_2$ of $C_0$ such that $C_0 \cap C_1 \cap C_2 = \emptyset$ 
and by the previous discussion the three curves determine the same element $\OO_S(C)$ of $H^1(S, \OO^*_S)$, then we have sections $F_i$ of $\OO_S(C)$ vanishing on $C_i$, $i=0,1,2$. We define
$$
\sigma:=(F_0:F_1:F_2): S \dashrightarrow \PP
$$
which is in fact a morphism. Moreover, by the condition on the intersection of the curves we deduce that the generic topological degree of $\sigma$ is $1$. In particular $\sigma$ is a sequence of blow-ups with no exceptional divisor intersecting $C_0$.
\end{proof}

\begin{prop}
There is a unique global projective structure on $\PP$, namely the linear one.
\end{prop}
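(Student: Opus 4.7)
The plan is to reduce the statement to Proposition \ref{prop:ProjectiveSurface}. Let $\Pi$ be a projective structure on $\PP$, fix a point $p_0 \in \PP$, and let $(S_0, C_{p_0})$ be the germ of $(+1)$-neighborhood dual to $(\PP,\Pi)$ at $p_0$ via Le Brun's correspondence. If one can embed $(S_0, C_{p_0})$ into a smooth compact complex surface $S$ in which $C_{p_0}$ still has self-intersection $+1$, then Proposition \ref{prop:ProjectiveSurface} identifies $(S, C_{p_0}) \simeq (\PP, L_0)$, so the germ of $\Pi$ at $p_0$ is the linear one, and Proposition \ref{prop:ConnectedLinearization} propagates linearizability to every point of $\PP$.

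To produce such an $S$ I would exploit the globality of $\Pi$: the geodesic foliation $\GG$ is defined on the compact projective threefold $\Pp(T\PP)$, transverse to the $\Pp^1$-fibration $\pi:\Pp(T\PP)\to\PP$. The natural candidate is the leaf space $S:=\Pp(T\PP)/\GG$; under the quotient map $\pi^*$, each fiber $\Pp(T_p\PP)$ projects to a smooth rational curve $C_p\subset S$ of self-intersection $+1$, and a tubular neighborhood of $C_{p_0}$ recovers $(S_0, C_{p_0})$. The main obstacle is to verify that $S$ really is a smooth compact complex surface: a priori, leaves of a holomorphic foliation on a projective threefold can be non-algebraic, in which case the quotient would be pathological. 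This is the technically delicate step; one would use the special features of $\GG$ (Legendrian for the contact structure on $\Pp(T\PP)$, transverse to $\pi$, Riccati along fibers of $\pi$) together with compactness of $\PP$ to show that every leaf of $\GG$ closes up as a smooth $\Pp^1$, yielding a Hausdorff compact quotient.

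Once local linearizability of $\Pi$ is established at every point of $\PP$, one finishes via a developing-map argument: since $\PP$ is simply connected, the local linearizing charts assemble into a global holomorphic map $\PP\to\PP$, which is \'etale and hence a biholomorphism between compact surfaces. Composing with a suitable element of $\mathrm{PGL}_3(\CC)$ identifies $\Pi$ with the linear structure $\Pi_0$, establishing uniqueness.
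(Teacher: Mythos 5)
Your outline is coherent, but it hinges entirely on the step you yourself flag as ``technically delicate,'' and that step is a genuine gap rather than a routine verification: you give no argument that the leaf space $S:=\Pp(T\PP)/\GG$ is a Hausdorff, compact, smooth complex surface, and there is no evident way to supply one without already knowing the conclusion. For a hypothetical non-linear projective structure on $\PP$, the geodesic foliation $\GG$ is just some holomorphic foliation by curves on the compact threefold $\Pp(T\PP)$; being Legendrian and transverse to the $\Pp^1$-fibration does not force its leaves to be compact (compare Riccati foliations on ruled surfaces, transverse to the generic fiber yet with dense leaves, or Jouanolou-type foliations with no algebraic leaf at all). The assertion that ``every leaf of $\GG$ closes up as a smooth $\Pp^1$'' is essentially equivalent to the global rigidity you are trying to prove, so the argument is circular at its core. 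Note also that, even granting a good compact quotient, Proposition \ref{prop:ProjectiveSurface} as stated requires $S$ to be a smooth \emph{projective} surface, so you would additionally have to establish algebraicity of the quotient before invoking it.

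The paper sidesteps all of this with a one-line intersection-theoretic computation on $M=\Pp(T\PP)$: writing the cotangent bundle of the geodesic foliation $\FF_\Pi$ as $\OO_M(ah+b\check{h})$ and letting $\VV$ denote the foliation by fibers, one has $\tang(\FF_\Pi,\VV)=(a+2)h+(b-1)\check{h}$, and total transversality forces this divisor class to vanish, which pins down $(a,b)$ and leaves $y''=0$ as the only possibility. If you wish to keep your reduction to Proposition \ref{prop:ProjectiveSurface} and Proposition \ref{prop:ConnectedLinearization}, you must first prove compactness of the leaves of $\GG$ by some independent means; as written the proof is incomplete. (Your final developing-map step is fine once local linearizability at every point is known, but the argument never legitimately reaches that point.)
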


\begin{proof}
If $\FF_{\Pi}$ is the associated regular foliation by curves defined in $M = \Pp(T\PP)$ with cotangent bundle $\OO_M(ah + b \check{h})$ and $\VV$ stands for the foliation defined by the fibers, then $Tang(\FF_{\Pi}, \VV) = (a+2)h + (b-1)\check{h}$ (see \cite[Proposition 2.3]{MF}). So, the only second order differential equation totally transverse to $\VV$  is the one given by $y''= 0$.
\end{proof}

Finally, we end-up with an analytic criterium proved by Liouville in \cite{Liouville} (and later by Tresse and Cartan):
\begin{prop}\label{Equations-L1-L2}
Given a projective structure $\Pi$ defined by equation (\ref{eq:ProjStructDiffEq}) in the introduction, then 
consider the two functions $L_i(x,y)$ defined by
\begin{equation}\label{eq:Liouville}
\left\{\begin{matrix}
L_1=2B_{xy}-C_{xx} -3A_{yy}-6AD_x -3A_xD +3(AC)_y  +BC_x - 2BB_y,\\
L_2=2C_{xy}-B_{yy} -3D_{xx}+6A_yD +3AD_y  -3(BD)_x -B_yB + 2CC_x.
\end{matrix}\right.
\end{equation}
Then, the tensor $\theta:=(L_1 dx + L_2dy)\otimes ( dx \wedge dy)$ is intrinsically
defined by the projective structure $\Pi$, i.e. its definition does not depend on the choice of coordinates $(x,y)$.
Moreover, $\Pi$ is linearizable if, and only if, $\theta \equiv 0$.
\end{prop}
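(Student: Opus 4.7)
The plan is to handle the two assertions separately: intrinsicness of the tensor $\theta$, and the iff characterization of linearizability. Both ultimately reduce to the transformation law of the coefficients $(A,B,C,D)$ under a change of coordinates.

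First I would work out how the coefficients $(A,B,C,D)$ transform under a local biholomorphism $(x,y)\mapsto(\tilde x(x,y),\tilde y(x,y))$. Starting from $y''=A+By'+C(y')^2+D(y')^3$, a direct (but tedious) computation using the chain rule shows that the transformed equation has cubic right-hand side $\tilde A+\tilde B\tilde y'+\tilde C(\tilde y')^2+\tilde D(\tilde y')^3$ with $(\tilde A,\tilde B,\tilde C,\tilde D)$ expressed as explicit polynomial combinations of $(A,B,C,D)$, the first and second derivatives of $\tilde x,\tilde y$, and the inverse Jacobian. Plugging these transformation rules into the definitions (\ref{eq:Liouville}), one checks by direct expansion that the pair $(L_1,L_2)$ transforms exactly as the coefficients of a $1$-form multiplied by the Jacobian $J=\partial(\tilde x,\tilde y)/\partial(x,y)$. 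Since $dx\wedge dy=J^{-1}d\tilde x\wedge d\tilde y$, the combination $(L_1dx+L_2dy)\otimes(dx\wedge dy)$ is coordinate-independent, establishing the first assertion. A more conceptual alternative, which I would indicate as a remark, is to identify $(L_1,L_2)$ with (an appropriate trace of) the curvature of the normal projective Cartan connection associated to $\Pi$; intrinsicness is then automatic and the vanishing of $\theta$ acquires geometric meaning as flatness of that Cartan connection.

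For the easy direction of the equivalence, the linear structure $y''=0$ has $A=B=C=D=0$, so $L_1=L_2=0$ trivially. Combined with intrinsicness, any projective structure that is linearizable has $\theta\equiv 0$ in every chart.

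For the converse, the strategy is to read the vanishing of $\theta$ as the integrability condition of an overdetermined linear system whose solutions provide a linearizing change of coordinates. Concretely, a function $F$ on $U$ has the property that its level sets are geodesics of $\Pi$ if and only if $F$ satisfies a certain second order linear PDE, whose coefficients are explicit polynomials in $A,B,C,D$ and their first derivatives. Prolonging this PDE to a closed differential system on an auxiliary jet bundle, the integrability (Frobenius) conditions simplify, after bookkeeping, to precisely $L_1=0$ and $L_2=0$. Once integrability holds, one obtains a three-parameter family of solutions $F$; choosing two functionally independent solutions $F_1,F_2$ with common level $C_0$ produces a map $(1:F_1:F_2):U\dashrightarrow\Pp$ whose fibers are geodesics, and inverting this map along the lines of the proof of Proposition \ref{prop:2pencils} yields the desired linearizing diffeomorphism onto a neighborhood of a line in $\Pp$.

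The main obstacle is the converse direction, specifically the verification that $(L_1,L_2)$ are exactly the integrability obstructions for the first-integral system, rather than merely necessary conditions. This is where the computations are heaviest, and it is also the point where a Cartan-geometric reformulation would pay off: after showing that $\theta$ represents the essential curvature of the projective Cartan connection, the converse follows from the standard fact that a flat Cartan geometry modelled on $\mathbb P^2=\mathrm{PGL}_3/P$ is locally isomorphic to its flat model. I would present the direct Liouville-Tresse PDE calculation as the main argument and mention the Cartan-connection viewpoint as an illuminating alternative.
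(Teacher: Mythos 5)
The paper does not actually prove this proposition: it is quoted as a classical criterion of Liouville \cite{Liouville} (recovered later by Tresse and Cartan), so there is no in-paper argument to measure yours against. Your outline is the standard classical route, and its architecture is sound: invariance of $\theta$ does follow from the transformation law of $(A,B,C,D)$; necessity of $\theta\equiv 0$ follows from invariance together with the vanishing of (\ref{eq:Liouville}) for $y''=0$; and sufficiency is classically obtained either by exhibiting $L_1=L_2=0$ as the complete integrability conditions of an overdetermined linear system, or as flatness of the normal projective Cartan connection.

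There is, however, a concrete gap in your converse. The claim that ``the level sets of $F$ are geodesics iff $F$ satisfies a second order \emph{linear} PDE'' is false as stated: geodesy of the foliation $\{F=c\}$ means that the section $z=-F_x/F_y$ of $\mathbb P(TU)$ is invariant by the geodesic vector field, which after clearing denominators reads $-F_{xx}F_y^2+2F_xF_yF_{xy}-F_x^2F_{yy}=AF_y^3-BF_xF_y^2+CF_x^2F_y-DF_x^3$ --- quadratic in the $2$-jet of $F$, not linear. The classical linear system is imposed instead on the components of the linearizing map (equivalently on sections of a weighted jet bundle), and one must then show that when $L_1=L_2=0$ its solution space is three-dimensional and that two independent solutions $F_1,F_2$ have nonvanishing Jacobian and send \emph{all} geodesics (not merely two pencils) to lines; your appeal to the mechanism of Proposition \ref{prop:2pencils} presupposes the pencil property, which is precisely what must be produced. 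Moreover the two computations carrying the entire content of the statement --- the transformation law of $(L_1,L_2)$ and the identification of $L_1=L_2=0$ with the full set of integrability conditions rather than merely necessary ones --- are asserted, not performed. The Cartan-connection reformulation you mention in passing (see also \cite{BDE}) is the cleanest way to close both gaps at once, and would be the natural proof to write out in full.
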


We promtly deduce:

\begin{cor}\label{prop:ConnectedLinearization}
Let $\Pi$ be a projective structure on a connected open set $U$.
If $\Pi$ is linearizable at the neighborhood of a point $p\in U$, then it is linearizable
at the neighborhood of any otherpoint $q\in U$.
\end{cor}

\section{Foliated structures vs transverse fibrations}\label{Sec:FlatFibration}

A (non singular) foliation $\mathcal F$ on $U$, defined by say $y'=f(x,y)$,
can be equivalently defined by its graph $S:=\{z=f(x,y)\}\subset \mathbb P(T_U)$, 
a section of $\pi:\mathbb P(T_U)\to U$. The foliation is geodesic if, and only if, the section $S$
is tangent to $\mathcal G$; in this case, the section projects onto a curve $D:=\pi^\vee(S)$  
intersecting transversally the rational curve $C_0$ at a single point on $U^\vee$.
We thus get a one-to-one correspondance between geodesic foliations on $U$ 
and transversal curves on $(U^\vee,C_0)$.

\subsection{Pencil of foliations and Riccati foliation}
A (regular) \textit{pencil of foliations} on $U$ is a one-parameter family of foliations $\{ \FF_{t} \}_{t \in \Pp^{1}}$
defined by $\FF_t = \ker(\omega_t)$ for a pencil of $1$-forms $\{ \omega_t=\omega_0+t\omega_\infty \}_{t \in \Pp^{1}}$ with 
$\omega_0,\omega_\infty\in\Omega^1(U)$ and $\omega_0\wedge\omega_\infty\not=0$ on $U$.
The pencil of $1$-forms defining $\{ \FF_{t} \}_{t \in \Pp^{1}}$ is unique up to multiplication
by a non vanishing function:
$\tilde\omega_t=f\omega_t$ for all $t\in \Pp^{1}$ and $f\in\mathcal O^\vee(U)$.
In fact, the parametrization by $t\in \Pp^{1}$ is not intrinsec; we will say
that $\{ \FF_{t} \}_{t \in \Pp^{1}}$ and $\{ \FF'_{t} \}_{t \in \Pp^{1}}$ define
the same pencil on $U$ if there is a Moebius transformation $\varphi\in\mathrm{Aut}(\mathbb P^1)$
such that $\FF'_{t}=\FF_{\varphi(t)}$ for all $t\in \Pp^{1}$.

There exists a unique projective structure $\Pi$ whose geodesics are the leaves of the pencil. 
Indeed, the graphs $S_t$ of foliations $\FF_t$ are disjoint sections (since foliations are pairwise transversal)
and form a codimension one foliation $\mathcal H$ on $\mathbb P(T_U)$ transversal
to the projection $\pi:\mathbb P(T_U)\to U$. The foliation $\mathcal H$ is a Riccati foliation,
i.e. a Frobenius integrable Riccati distribution:
$$\mathcal H=\ker(\omega),\ \ \ \omega=dz+\alpha z^2+\beta z+\gamma,\ \ \ \omega\wedge d\omega=0.$$
Intersecting $\mathcal H$ with the contact distribution
yields a Legendrian foliation $\mathcal G$ (also transversal to the $\mathbb P^1$-fibers)
and thus a projective structure.

In local coordinates $(x,y)$ such that $\mathcal F_0$ and $\mathcal F_\infty$
are respectively defined by $\ker(dx)$ and $\ker(dy)$, we can assume the pencil generated by
$\omega_0=dx$ and $\omega_\infty=u(x,y)dy$ (we have normalized $\omega_0$)
with $u(0,0)\not=0$. Then, the graph of the foliation $\FF_t$ is given by the section
$S_t=\{z=-\frac{1}{tu(x,y)}\}\subset\mathbb P(T_U)$. These sections are the leaves 
of the Riccati foliation $\mathcal H=\ker(dz+\frac{du}{u}z)$, and we can deduce the equation of the projective structure:
$$y''+\frac{u_x}{u} y'+\frac{u_y}{u} (y')^2=0.$$
Note that the projective structure is also defined by the affine connection
$$\nabla=d+\Omega,\ \ \ \Omega=\begin{pmatrix}\frac{1}{2}\frac{du}{u}&0\\0&-\frac{1}{2}\frac{du}{u}\end{pmatrix}$$
which is flat (or integrable, curvature-free) $\Omega\wedge d\Omega=0$, and trace-free
$\mathrm{trace}(\Omega)=0$. 

\begin{remark}A Riccati distribution $\mathcal H=\ker(\omega)$  on $\mathbb P(T_U)$,
$$\omega=dz+\alpha z^2+\beta z+\gamma,\ \ \ \alpha,\beta,\gamma\in\Omega^1(U),$$
is the projectivization of a unique trace-free affine connection, namely
$$\nabla=d+\Omega,\ \ \ 
\Omega=\begin{pmatrix}-\frac{\beta}{2}&-\alpha\\ \gamma&\frac{\beta}{2}\end{pmatrix}.$$
Are equivalent
\begin{itemize}
\item $\nabla$ is flat: $\Omega\wedge\Omega+d\Omega=0$;
\item $\omega$ is Frobenius integrable: $\omega\wedge d\omega=0$.
\end{itemize}
There are many other affine connections whose projectivization is $\omega$
which are not flat: in general, we only have the implication [$\nabla$ flat] $\Rightarrow$ [$\mathcal H$ integrable].
\end{remark}

\subsection{Transverse fibrations on $U^\vee$}\label{sec:TransvFibration}

If we have a Riccati foliation $\mathcal H$ on $\mathbb P(T_U)$ which is $\mathcal G$-invariant,
then it descends as a foliation $\underline{\mathcal H}$ on $U^\vee$ transversal to $C_0$.
Maybe shrinking $U^\vee$, we get a fibration by holomorphic discs transversal to $C_0$
that can be defined by a holomorphic submersion 
$$H:U^\vee\to C_0$$
satisfying $H\vert_{C_0}=id\vert_{C_0}$ (a holomorphic retraction). 
Indeed, we can define this map on $\mathbb P(T_U)$ first (construct a first integral for $\mathcal H$)
and, then descend it to $U^\vee$.

Conversely, if we have a holomorphic map $H:U^\vee\to \mathbb P^1$ which is a submersion
in restriction to $C_0$, then fibers of $\tilde H:=H\circ \pi^\vee:\mathbb P(T_U)\to\mathbb P^1$
define the leaves of a Riccati foliation $\mathcal H$. Indeed, the restriction of $\tilde H$
to $\mathbb P^1$-fibers must be global diffeomorphisms, and in coordinates, 
$\tilde H$ take the form
$$\tilde H(x,y,z)=\frac{a_1(x,y)z+b_1(x,y)}{a_2(x,y)z+b_2(x,y)}$$
which, after derivation, give a Riccati distribution:

$$
\ker(d\tilde H)=$$
$$\ker\left(dz+\frac{a_2 da_1-a_1 da_2}{a_1 b_2-b_1 a_2}z^2+\frac{a_2 db_1-b_1 da_2+b_2 da_1-a_1 db_2}{a_1 b_2-b_1 a_2}z
+\frac{b_2 db_1-b_1 db_2}{a_1 b_2-b_1 a_2}\right).$$
By construction, the Riccati foliation $\mathcal H$ is $\mathcal G$-invariant.
One therefore deduce:

\begin{prop}[Krynski  \cite{Krynski}]\label{pencil-vs-flat-vs-fibration}Let $\Pi$ be a projective structure on $(U,0)$
and $(U^\vee,C_0)$ be the dual. The following data are equivalent:
\begin{itemize}
\item a pencil of geodesic foliations $\{ \FF_{t} \}_{t \in \Pp^{1}}$,
\item a $\mathcal G$-invariant Riccati foliation $\mathcal H$ on $\mathbb P(T_U)$,
\item a fibration by discs transversal to $C_0$ on $U^\vee$.
\end{itemize}
\end{prop}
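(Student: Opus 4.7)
The strategy is a cyclic proof $(1)\Rightarrow(2)\Rightarrow(3)\Rightarrow(1)$; most of the geometric content has been set up in the preceding two subsections, so the task is mainly to organize the correspondences and verify they are mutually inverse.

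For $(1)\Rightarrow(2)$, suppose $\{\FF_t\}_{t\in\Pp^1}$ is a pencil of geodesic foliations. Each $\FF_t$ lifts as a graph section $S_t\subset\Pp(TU)$ of $\pi$. The assumption that $\FF_t$ is geodesic means precisely that $S_t$ is $\GG$-invariant, and the transversality of the foliations in the pencil means that the sections $S_t$ are pairwise disjoint. Since $\Pp(TU)\to U$ is a $\Pp^1$-bundle, the $\Pp^1$-family of disjoint sections $\{S_t\}$ foliates $\Pp(TU)$ by sections of $\pi$; in the local form $\omega_0=dx$, $\omega_\infty=u(x,y)dy$ one checks immediately that $\{S_t\}$ is cut out by the Riccati $1$-form $dz+\frac{du}{u}z=0$. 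Each leaf being $\GG$-invariant, $\HH$ is $\GG$-invariant.

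For $(2)\Rightarrow(3)$, since $\HH$ is $\GG$-invariant, it descends to a foliation $\underline{\HH}$ on the quotient $U^*=\Pp(TU)/\GG$. Leaves of $\HH$ are local sections of $\pi$, so they meet any fiber $\pi^{-1}(p)\simeq\Pp^1$ transversely and exactly once; since $\pi^*$ restricted to such a fiber is exactly the embedding onto $C_p\subset U^*$, the foliation $\underline{\HH}$ is transversal to $C_p$, in particular to $C_0$. After possibly shrinking $U^*$, a first integral of $\underline{\HH}$ may be chosen to be a submersion $H:U^*\to C_0$ with $H|_{C_0}=\mathrm{id}$ (the retraction/fibration).

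For $(3)\Rightarrow(1)$, given $H:U^*\to C_0$ a fibration transverse to $C_0$, the composition $\tilde H:=H\circ\pi^*:\Pp(TU)\to C_0\simeq\Pp^1$ is, in restriction to each $\Pp^1$-fiber of $\pi$, a global biholomorphism (since $\pi^*$ embeds the fiber onto $C_p$ and $H|_{C_p}$ is a biholomorphism by transversality). In coordinates $(x,y,z)$, such a map is necessarily Möbius in $z$, so $d\tilde H=0$ is a Riccati distribution as computed in the preceding subsection; integrability follows from the existence of the first integral, giving a Riccati foliation $\HH$. Its leaves are the preimages of points of $C_0$, so they are $\GG$-invariant (the fibers of $\tilde H$ are unions of $\GG$-leaves). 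Projecting leaves of $\HH$ to $U$ via the graph correspondence yields a $\Pp^1$-family of pairwise transverse geodesic foliations $\{\FF_t\}$, which is a pencil. Finally, tracing through the constructions shows that each triangle of maps composes to the identity, up to the Möbius reparametrization allowed in the definition of a pencil, so the three data are indeed equivalent.

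The only subtle point is that $\GG$-invariance of $\HH$ is exactly the property needed to descend to $U^*$, and that transversality to $C_0$ on the quotient side matches the fact that leaves of $\HH$ are sections of $\pi$ on the upstairs side; once this is observed, the rest is a matter of reading off from the explicit local formulas already written down.
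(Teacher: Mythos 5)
Your proof is correct and follows essentially the same route as the paper, which establishes the three correspondences in the two subsections preceding the proposition (graph sections $S_t$ of a pencil assembling into a $\GG$-invariant Riccati foliation, descent to a transverse fibration on $U^*$, and recovery of the Riccati distribution from $H\circ\pi^*$ being M\"obius on fibers). The cyclic organization and the observation that $\GG$-invariance is exactly what permits descent to the quotient match the paper's intent.
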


In this case, we say that the projective structure is \textit{foliated}.

\begin{example}\label{conexion-trivial}
Let $\Pi_0$ be the trivial structure $y ''=0$ with Riccati distribution $\omega_0 = dz$. 
We fix coordinates $(a,b) \in \check{\Pp}^2$ for the line $\{ax+by = 1\} \subseteq \PP$ and observe that $\check{0}$ is the line of the infinity
$L_\infty$ in this coordinates. It is straightforward to see 
that the Riccati foliation associated to the pencil of lines through $(a,b)$ is
$$
\omega = dz + \left( \left( \frac{a}{1-ax-by}\right) + z \left( \frac{b}{1-ax-by}\right) \right)(dy - zdx). 
$$
Remark that the fibrations induced by $\omega_0$ and $\omega$ have a common fiber, which is the fiber associated to the radial foliation with center at $\{ax+by=1\} \cap L_{\infty}$.
\end{example}

\subsection{Webs and curvature} 
We recall that a (regular) $k$-web $\WW=\FF_1 \boxtimes \ldots \boxtimes \FF_{k}$ on $(\CC^2,0)$ is a collection of $k$ germs of (smooth) pair-wise transversal foliations. We say that the projective structure $\Pi$ is compatible 
with a web $\WW$ if every leaf of $\WW$ is a geodesic of $\Pi$. 
For $4$-webs we have the following proposition.

\begin{prop}[{\cite[Proposition 6.1.6]{PereiraPirio}}]\label{unique-structure-compatible-4-web}
If $\WW$ is a (regular) $4$-web on $(\CC^2,0)$ then there is a unique projective structure $\Pi_{\WW}$ compatible with $\WW$. 
\end{prop}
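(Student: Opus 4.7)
The plan is to reduce the problem to an $n\times n$ linear system at each point and observe that regularity of the web makes the system invertible. Concretely, up to a linear change of coordinates, I may assume that none of the four leaves of $\WW$ passing through $0$ is vertical, so in a sufficiently small neighborhood each foliation $\FF_i$ of $\WW$ is defined by a slope function $y'=f_i(x,y)$ with $f_i\in \OO(U)$. The regularity assumption (general position) translates to $f_i(x,y)\neq f_j(x,y)$ for $i\neq j$ on $U$.

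Next I would express the geodesic condition. A graph $y(x)$ tangent to $\FF_i$ satisfies $y'=f_i(x,y)$, hence $y''=(\partial_x f_i + f_i\,\partial_y f_i)(x,y)$. Thus $\FF_i$ is a geodesic foliation of the projective structure $y''=A+B y'+C(y')^2+D(y')^3$ if and only if
$$
\partial_x f_i + f_i\,\partial_y f_i \;=\; A + B f_i + C f_i^2 + D f_i^3
$$
holds identically on $U$. Imposing this for $i=1,2,3,4$ gives a linear system in the four unknowns $A,B,C,D\in\OO(U)$ with matrix
$$
M \;=\;\begin{pmatrix}
1 & f_1 & f_1^2 & f_1^3\\
1 & f_2 & f_2^2 & f_2^3\\
1 & f_3 & f_3^2 & f_3^3\\
1 & f_4 & f_4^2 & f_4^3
\end{pmatrix}.
$$
This is a Vandermonde matrix, so $\det M=\prod_{i<j}(f_j-f_i)$, which is nowhere zero on $U$ by the regularity of $\WW$. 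By Cramer's rule we obtain unique holomorphic coefficients $A,B,C,D\in\OO(U)$, and these define the required projective structure $\Pi_\WW$.

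Finally, I would check intrinsicality: the construction is patently invariant under shrinking $U$, and if the initial linear change of coordinates had been needed (to avoid vertical directions), the transformation rule of the equation (\ref{eq:ProjStructDiffEq}) under diffeomorphisms yields the same projective structure, since being a geodesic foliation is coordinate-independent. Existence and unicity therefore hold globally on $(\CC^2,0)$. The only subtle point, which is really a bookkeeping issue, is the reduction to the non-vertical case; everything else is immediate from the Vandermonde identity.
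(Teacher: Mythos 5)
Your proof is correct, and it is the standard argument for this statement: the paper itself only cites Pereira--Pirio without reproducing a proof, and the proof there is exactly this reduction of the geodesic condition $\partial_x f_i+f_i\partial_y f_i=A+Bf_i+Cf_i^2+Df_i^3$ to a Vandermonde system, invertible because regularity of the web forces the slopes to be pairwise distinct. Nothing further is needed.
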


Let $\WW=\FF_1 \boxtimes \FF_2 \boxtimes \FF_3 \boxtimes \FF_4$ be a $4$-web on $(\CC^2,0)$
$$\FF_i = [X_i = \partial_x + e_i (x,y) \partial_y] = [\eta_i = e_i dx - dy],\ \ \ i=1,2,3,4.$$
The cross-ratio 
\begin{equation}\label{eq:CrossRatioFol}
\left(\FF_1,\FF_2;\FF_3,\FF_4\right):=\frac{(e_1-e_3)(e_2-e_4)}{(e_2-e_3)(e_1-e_4)}
\end{equation}
is a holomorphic function on $(\CC,0)$ intrinsically defined by $\WW$. 
Then, we have:

\begin{prop}\label{unique-pencil-3-web}
If $\WW=\FF_0 \boxtimes \FF_1 \boxtimes \FF_{\infty}$ is a regular $3$-web on $(\CC^2,0)$,
then there is a unique pencil $\{\FF_t\}_{t\in\mathbb P^1}$ that contains $\FF_0$,  $\FF_1$
and $\FF_{\infty}$ as elements. Precisely, $\FF_t$ is defined as the unique foliation such that
$$\left(\FF_0,\FF_1;\FF_t,\FF_\infty\right)\equiv t.$$
We denote by  $\Pi_{\WW}$ the corresponding projective structure on $(\CC ^{2},0)$.
\end{prop}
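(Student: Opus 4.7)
The key idea is that a regular pencil of foliations induces, at each point $p$, a projective linear isomorphism between the parameter space $\mathbb{P}^1$ and the space $\mathbb{P}(T_p\CC^2)$ of tangent directions at $p$. Indeed, if $\omega_t=\omega_0+t\omega_\infty$ defines the pencil, then regularity ($\omega_0\wedge\omega_\infty$ nowhere zero) makes $\omega_0|_p$ and $\omega_\infty|_p$ a basis of $T_p^*\CC^2$, so $t\mapsto\ker\bigl((\omega_0+t\omega_\infty)|_p\bigr)$ is a Möbius parametrization of $\mathbb{P}(T_p\CC^2)$. This reduces the proposition to the fundamental fact that a Möbius transformation is determined by its values at three distinct points.

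For existence, I would fix defining $1$-forms $\eta_0,\eta_\infty$ of $\FF_0,\FF_\infty$. Since $\WW$ is regular, $\eta_0\wedge\eta_\infty$ is nowhere zero, so I can write
$$\eta_1=b\,\eta_0+a\,\eta_\infty,\qquad a,b\in\OO(\CC^2,0).$$
Transversality of $\FF_1$ with $\FF_0$ and $\FF_\infty$ forces $a$ and $b$ to be nowhere zero. Setting $\omega_t:=b\,\eta_0+t\,a\,\eta_\infty$ then produces a regular pencil whose values at $t=0,1,\infty$ are (multiples of) $\eta_0,\eta_1,\eta_\infty$, i.e.\ define exactly $\FF_0,\FF_1,\FF_\infty$.

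For uniqueness, suppose $\{\FF'_s\}_{s\in\mathbb{P}^1}$ is another pencil containing the three foliations. After a Möbius reparametrization (which by convention does not change the pencil), I may assume $\FF'_i=\FF_i$ for $i\in\{0,1,\infty\}$. At each point $p$, the two induced projective isomorphisms $\phi_p,\phi'_p:\mathbb{P}^1\to\mathbb{P}(T_p\CC^2)$ agree at the three distinct points $0,1,\infty$, so $\phi'_p\circ\phi_p^{-1}$ is a Möbius transformation of $\mathbb{P}^1$ fixing three points and therefore the identity. Hence $T_p\FF_t=T_p\FF'_t$ for all $t$ and all $p$, and the two pencils coincide. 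The cross-ratio characterization is then an immediate consequence of projective invariance of the cross-ratio under $\phi_p$: $(\FF_0,\FF_1;\FF_t,\FF_\infty)(p)=(0,1;t,\infty)=t$.

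There is no substantive obstacle here; the only subtlety is to verify that the analytic (rather than merely pointwise) dependence is right, but this is immediate from the expansion $\eta_1=b\eta_0+a\eta_\infty$ with $a,b\in\OO$. Once one adopts the viewpoint of the projective isomorphism $\phi_p$, the whole statement — existence, uniqueness, and the cross-ratio formula — follows from projective geometry on a line.
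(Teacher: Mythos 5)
Your proof is correct and follows essentially the route the paper implicitly intends (the proposition is stated without proof, the cross-ratio condition doing all the work): existence from writing $\eta_1=b\,\eta_0+a\,\eta_\infty$ with $a,b$ nowhere vanishing by pairwise transversality, and uniqueness because the pointwise M\"obius parametrization $t\mapsto\ker(\omega_t|_p)$ of $\mathbb P(T_p\CC^2)$ is determined by its values at three distinct parameters. The only quibble is a normalization one: with the paper's formula (\ref{eq:CrossRatioFol}) one gets $(0,1;t,\infty)=t/(t-1)$ rather than $t$, so your explicit pencil $\omega_t=b\,\eta_0+t\,a\,\eta_\infty$ satisfies the stated cross-ratio identity only after a M\"obius reparametrization in $t$ --- harmless, since the paper identifies pencils up to exactly such reparametrizations.
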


Conversely, any foliated projective structure comes from a $3$-web: it suffices to choose
$3$ elements of a pencil. In particular, any $4$ elements of a pencil $\{\FF_t\}_{t\in\mathbb P^1}$
have constant cross-ratio. 

\begin{remark}\label{rem:LinearClosedHexag}
The projective structure $\Pi_{\WW}$ is linearizable if, and only if, 
the pencil $\{\FF_t\}$ can be defined by a pencil of closed $1$-forms (see \cite{Nakai} or \cite{LinsNeto}).
Equivalently, any extracted $3$-web is hexagonal
(i.e. hexagonal, see \cite[section 6]{BeLiLo} or \cite[Chap. 1, Sect. 2]{PereiraPirio}).
\end{remark}

\begin{example}
We can easily construct non linearizable projective structure by using Remark \ref{rem:LinearClosedHexag}.
For instance, the projective structure generated by the pencil of $1$-forms $\omega_t:=dx+te^{xy}dy$ 
cannot be defined by a pencil of closed $1$-forms. Therefore, it defines a non linearizable pencil $\WW$,
and a non linearizable projective structure $\Pi_{\WW}$.
\end{example}

\subsection{Prolongation of foliated structures}

Let $\Pi$ be a projective structure defined on an open set $\Omega\subset\CC^2$ by the differential equation
\begin{equation}\label{eq:ProjStructDiffEqBis}
y''=A(x,y)+B(x,y)(y')+C(x,y)(y')^2+D(x,y)(y')^3
\end{equation}
We assume that $\Pi$ is foliated: it can be defined from a Riccati foliation on $\PP T\Omega$, i.e. a Pfaffian equation
\begin{equation}\label{eq:Riccati}
dz=\alpha z^2+\beta z+\gamma
\end{equation}
where Frobenius integrability condition writes:
\begin{equation}\label{eq:FrobeniusRicc}
\left\{\begin{matrix} d\alpha=&\alpha\wedge\beta\\
d\beta=&2\alpha\wedge\gamma\\
d\gamma=&\beta\wedge\gamma
\end{matrix}\right.
\end{equation}
Substituting $z=\frac{dy}{dx}$ in (\ref{eq:Riccati}) and comparing with (\ref{eq:ProjStructDiffEqBis}) yields
$$y''=\gamma_1+(\beta_1+\gamma_2)(y')+(\alpha_1+\beta_2)(y')^2+\alpha_2(y')^3$$
with
\begin{equation}\label{eq:FromProjToRicc}
\left\{\begin{matrix} 
\alpha=\alpha_1 dx+\alpha_2 dy=&(C-g)dx+D dy\\
\beta=\beta_1 dx+\beta_2 dy=&fdx+gdy\\
\gamma=\gamma_1dx+\gamma_2 dy=&A dx+(B-f)dy
\end{matrix}\right.
\end{equation}
By substituting (\ref{eq:FromProjToRicc}) in (\ref{eq:FrobeniusRicc}), we promptly deduce:

\begin{prop}\label{Prop:FoliatedCriterium}
The projective structure $\Pi$ defined by (\ref{eq:ProjStructDiffEqBis})
is foliated if, and only if,  there exist functions $f,g\in\OO(\Omega)$ such that 

\begin{equation}\label{eq:IntegEqfg}
\left\{\begin{matrix} 
f_x&=&f^2-Bf+Ag+B_x-A_y\\
g_x-f_y&=&2\left(fg-Bg-Cf+BC-AD\right)\\
g_y&=&-g^2+Cg-Df+C_y-D_x
\end{matrix}\right.
\end{equation}
\end{prop}

We want to understand if the existence of a foliated structure at the neighborhood of a point of $\Omega$
propagates on the whole of $\Omega$, maybe assuming $\Omega$ simply connected. 
We first notice that $\Omega$ might have meromorphic extension even if the projective structure
is holomorphic as shown by the following example:

\begin{example}\label{ex:FlatMeroForHoloProj}
Consider the linear projective structure $\Pi_0$ defined by $y''=0$. Then the meromorphic Riccati equation
$$dz=\frac{dy-z dx}{x}$$
is Frobenius integrable and induces the projective structure $\Pi_0$. In fact, it defines a pencil of foliations
by lines, namely the family of radial foliations with center along $x=0$. The foliated structure has a pole
along $x=0$.
\end{example}

Therefore, we cannot expect to have a holomorphic prolongation in general, but we are going to prove that 
it admits a meromorphic prolongation to $\Omega$ assuming that $\Omega$ is simply connected.
Let us first prove it assuming that the projective structure is normalized with $A=D=0$.

\begin{lemma}\label{Lem:ProlongationFolAD0}
Assume that $\Omega$ is a polydisc and the projective structure $\Pi$ is given 
by (\ref{eq:ProjStructDiffEqBis}) with $A=D=0$ on $\Omega$.
Then any local $f,g$ holomorphic at a point $p\in\Omega$ and satisfying (\ref{eq:IntegEqfg}) 
extend meromorphically on the whole of $\Omega$.
\end{lemma}

Recall that $A=D=0$ is equivalent to say that the foliations $dx=0$ and $dy=0$ are $\Pi$-geodesic.

\begin{proof}Assume that $f$ and $g$ are holomorphic on an open ball $U\subset\Omega=\Disc_1\times \Disc_2$
satisfying equations (\ref{eq:IntegEqfg}). We want to prove that we can extend
$f$ and $g$ meromorphically on $\Omega$; indeed, in that case, the Riccati equation
(\ref{eq:FromProjToRicc}) will be integrable on $U$, and therefore on the whole of $\Omega$
by analytic continuation. 
Setting $A=D=0$ in (\ref{eq:IntegEqfg}) gives
\begin{equation}\label{eq:IntegEqfgCD0}
\left\{\begin{matrix} 
f_x&=&f^2-Bf+B_x\\
g_x-f_y&=&2\left(fg-Bg-Cf+BC\right)\\
g_y&=&-g^2+Cg+C_y
\end{matrix}\right.
\end{equation}
The basic idea of the proof is to use successively the different differential equations 
(all of Riccati type either in $f$ or $g$) to extend the solutions $f,g$ firstly along a vertical cylinder,
and then horizontally on the whole of $\Omega$. However, we have to be careful because
solutions may have poles, and equations also after substitution.

Denote by $\pi_1(x,y)=x$ and $\pi_2(x,y)=y$ are the projections on coordinates,
and consider the vertical cylinder $U_1:=\pi_1^{-1}(\pi_1(U))$.
The last equation of (\ref{eq:IntegEqfgCD0}) is a Riccati equation in $g$ over $U_1$ with respect to variable $y$.
As we already get a solution on $U$, the Riccati equation provides by integration w.r.t. $y$ a meromorphic
extension along the vertical cylinder $U_1$, solving the same equation, that we still denote by $g$. 
Substituting the meromorphic solution $g$ in the second equation yields a meromorphic Bernoulli equation 
on $U_1$ for $f$ with respect to variable $y$. 
Therefore, the holomorphic solution $f$ on $U$ admits an analytic continuation along any path in $U_1\setminus(g)_\infty$,
where $(g)_\infty$ denotes the polar locus of $g$ in $U_1$. However, $f$ needs not be meromorphic at $(g)_\infty$
{\it a priori}, and might have multiform extension to the complement. 

We claim that $f$ has meromorphic extension along non horizontal branches of $(g)_\infty$. Indeed, if $\Gamma$ is such 
a branch, then the restriction of $f$ to a general line $L:\{y=\text{constant}\}$ intersecting $\Gamma$ also intersects
$U_1\setminus (g)_\infty$ where $f$ admits an analytic continuation, and the first equation shows that $f$ extends
meromorphically all along $L$, and in particular across $\Gamma$. Therefore, $f$ admits a meromorphic continuation 
along any path inside $U_1^*=U_1\setminus\Gamma_2$ where $\Gamma_2$ is the union of horizontal branches of $(g)_\infty$,
i.e. fibers of $\pi_2$;
moreover, the polar locus $(f)_\infty$ has no horizontal branch (apart from $\Gamma_2$ where it is not defined). 

We now consider the first equation which allows us to provide a meromorphic continuation of $f$ along all horizontal lines
$L$ intersecting $U_1^*$. In other words, $f$ admits a meromorphic continuation along any path inside 
$V_1^*=\pi_2^{-1}\pi_2(U_1^*)$; if we still denote by $\Gamma_2$ the extension of horizontal branches 
of $(g)_\infty$ in $\Omega$, then we see that $V_1=\Omega\setminus \Gamma_2$.

Finally, we use the second equation again to provide the analytic continuation of $g$ in the horizontal paths.
Since we have to substitute $f$ in that equation, we get, as before, that $g$ admits meromorphic continuation
along $V_2=\Omega\setminus\Gamma_1$ where $\Gamma_1$ is contained into vertical branches of the polar locus $(f)_\infty$.

So far, we have proved that the foliated structure can be continuated meromorphically 
along every path avoiding the union $\Gamma_1\cup\Gamma_2$ of vertical and horizontal geodesics.
We have that $f$ and $g$ have a pole at $\Gamma_1$ and $\Gamma_2$ respectively,
but $g$ and $f$ do not {\it a priori} extend meromorphically on $\Gamma_1$ and $\Gamma_2$ respectively,
and might have monodromy around.
In order to conclude the proof, we have to check that these are fake obstructions and the foliated structure 
extends meromorphically on the whole of $\Omega$. One way to prove this is to notice that 
the vertical and horizontal geodesic foliations have been choosen arbitrarily: we can repeat 
our arguments in other coordinates, by normalizing two geodesic foliations that are transversal to 
both $\Gamma_1$ and $\Gamma_2$, so that these curve become neither horizontal, nor vertical
in the new coordinates. Another way to prove it is to make a closer analysis at differential equations 
near a generic point of $\Gamma_1$, say. The first equation can be rewritten as
$$\frac{f_x-B_x}{f-B}=f$$
so that we see that the meromorphic solution $f$ must have simple poles with constant residue $-1$. Now, let $x=0$ such a pole.
The second equation is a Bernoulli equation for $g$ with simple poles, of the form
$$g_x=\left(-\frac{2}{x}+\text{holomorphic}\right)\cdot g+\frac{\text{holomorphic}}{x};$$
but such equation has only meromorphic solutions: indeed, $x^2g$ satisfies a holomorphic Bernoulli equation 
and is therefore holomorphic. We can make a similar discussion with horizontal branches $\Gamma_2$.
\end{proof}

\begin{cor}\label{lem:ProlongationFol}
Assume that $\Omega\subset\CC^2$ is an open set equipped with a projective structure $\Pi$.
If $\Pi$ is foliated at the neighborhood of some point $p\in\Omega$, then the foliated structure
(or Riccati foliation) can be meromorphically continuated along any path in $\Omega$.
\end{cor}

\begin{proof}It suffices to cover the path by local polydiscs on which we can normalize $\Pi$
as in Proposition \ref{Lem:ProlongationFolAD0}, and then use that proposition successively 
along the path to propagate the Riccati structure.
\end{proof}

A similar prolongation result has been obtained for first integrals of the affine connection
in \cite{ContattoDunajski2} (degree 1) and \cite{BDE} (degree 2). However, these does not 
imply similar results for first integrals of the projective structure. And indeed, in the case 
of affine connections, there are no pole along prolongation.

\section{Classification of neighborhoods of rational curves}\label{sec:ClassifMishNeigh}
Let $\mathbb P^1\hookrightarrow S$ be an embedding of $\mathbb P^1$
into a smooth complex surface and let $C$ be its image. The self-intersection 
of $C$ is also the degree of the normal bundle of the curve  $C\cdot C=\deg(N_C)$.
When $C\cdot C< 0$, it follows from famous work of Grauert \cite{Grauert} 
that the germ of neighborhood $(S,C)$ is linearizable, i.e. biholomorphically
equivalent to $(N_C,0)$ where $0$ denotes the zero section in the total space of the normal bundle. 
Such neighborhood 
is called rigid since there is no non trivial deformation. When $C\cdot C=k\ge 0$,
it follows from Kodaira \cite{Kodaira} that the deformation space of the curve $C$
in its neighborhood is smooth of dimension $k+1$. In particular, for $C\cdot C=0$,
the neighborhood is a fibration by rational curves, which is thus trivial by Fisher-Grauert
\cite{FischerGrauert}: the neighborhood is again linearizable (see also Savel'ev \cite{Savelev}), thus rigid.
However, in the positive case $C\cdot C>0$, it is also well-known that we have huge moduli.
The analytic classification is due to Mishustin \cite{Mishustin} 
but a formal version was already given by Hurtubise and Kamran in \cite{Hurtubise} one year before;
in this section, we recall the case $C\cdot C=1$.

Let us first decompose $C=V_0\cup V_\infty$ where $x_i:V_i\stackrel{\sim}{\to}\mathbb C$
are affine charts, $i=0,\infty$, with $x_0x_\infty=1$ on $V_0\cap V_\infty$.
Then any germ of neighborhood $(S,C)$ can be decomposed as the union 
$U_0 \cup U_\infty$ of two trivial neighborhoods $U_i \simeq V_i \times \mathbb{D}_{\epsilon}$ with coordinates $(x_i,y_i)$ patched together by a holomorphic map
$$
(x_{\infty}, y_{\infty})=\Phi(x_0, y_0)=\left( \frac{1}{x_0} + \sum_{n\geq 1} a_{n}(x_0) y_{0}^{n}\ ,\ \sum_{n\geq 1} b_{n}(x_0) y_{0}^{n} \right)
$$
where $a_n,b_n$ are holomorphic on $V_0\cap V_\infty\simeq\mathbb C^*$. Moreover, $b_1$
does not vanish on $V_0\cap V_\infty$ and, viewed as a cocycle $\{b_1\} \in H^1(\Pp^1, \OO_C^{*})$,
defines the normal bundle $N_C$. Denote $U_\Phi$ the germ of neighborhood defined by such a gluing map. The gluing map $\Phi$ can also be viewed as a non linear cocycle encoding the biholomorphic class 
of the neighborhood, as illustrated by the following straightforward statement.

\begin{prop}\label{prop:EquivalentCocycles}
Given another map $\Phi'$, then the following data are equivalent:
\begin{itemize}
\item a germ of biholomorphism $\Psi:U_\Phi\stackrel{\sim}{\to} U_{\Phi'}$ inducing the identity on $C$,
\item a pair of biholomorphism germs 
$$
\Psi^{i}(x_i, y_i) =\left( x_i + \sum_{n\geq 1} a^{i}_{n}(x_i) y_{i}^{n}\ ,\ \sum_{n\geq 1} b^{i}_{n}(x_i) y_{i}^{n} \right), \hspace{0.3cm} (i=0, \infty)
$$
(with $b_1^0,b_1^\infty$ not vanishing) satisfying $\Phi'\circ\Psi^0=\Psi^{\infty}\circ\Phi$:
\end{itemize}
\xymatrix{
&&&&&U_0 \ar[r]^{\Phi} \ar[d]_{\Psi^0} & U_{\infty}\ar[d]^{\Psi^{\infty}}  \\
&&&&&U_0 \ar[r]^{\Phi'}  & U_{\infty}  
}
We will say that the two ``cocycles'' $\Phi$ and $\Phi'$ are equivalent in this case.
\end{prop}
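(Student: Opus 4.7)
\emph{Plan.} The statement is essentially formal: it identifies a biholomorphism between the two patched neighborhoods with a pair of chart-wise biholomorphism germs intertwining the two cocycles. My plan is to check the two directions separately, using in both cases that the condition $\Psi|_C=\mathrm{id}$ forces the announced ``pure $y$-series'' ansatz on each chart, and that the cocycle equation $\Phi'\circ\Psi^0=\Psi^\infty\circ\Phi$ is a tautological rewriting of ``$\Psi$ is well defined on the overlap''.

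For the implication $(\Psi^0,\Psi^\infty)\Rightarrow\Psi$, I would define a map $\Psi$ on $U_\Phi$ by declaring $\Psi=\Psi^i$ in the chart $U_i$, $i=0,\infty$. The equation $\Phi'\circ\Psi^0=\Psi^\infty\circ\Phi$ is exactly the assertion that the two definitions coincide on the overlap $U_0\cap U_\infty$ after applying the respective gluings, so $\Psi$ is a well-defined holomorphic map. Its Jacobian at a point $(x_i,0)\in C$ is upper triangular with diagonal entries $(1,b_1^i(x_i))$; since $b_1^i$ is nowhere vanishing on $V_i$, $\Psi$ is a local biholomorphism at every point of $C$, and by the implicit function theorem it is a biholomorphism of a sufficiently thin tubular neighborhood. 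The ansatz also makes it clear that $\Psi$ fixes $C$ pointwise.

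For the converse $\Psi\Rightarrow(\Psi^0,\Psi^\infty)$, I would set $\Psi^i:=\Psi|_{U_i}$. Writing $\Psi^i(x_i,y_i)=(P_i,Q_i)$ and imposing $P_i(x_i,0)=x_i$ and $Q_i(x_i,0)=0$ (which translates $\Psi|_C=\mathrm{id}$) forces the Taylor expansions
$$
P_i=x_i+\sum_{n\geq 1}a_n^i(x_i)y_i^n,\qquad Q_i=\sum_{n\geq 1}b_n^i(x_i)y_i^n,
$$
with coefficients holomorphic on $V_i$. The non-vanishing of $b_1^i$ is then equivalent to $\Psi$ being a local biholomorphism at each point of $C\cap V_i$ (again from the upper-triangular Jacobian). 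The relation $\Phi'\circ\Psi^0=\Psi^\infty\circ\Phi$ is automatic since both compositions compute $\Psi|_{U_0\cap U_\infty}$ read in the target $\infty$-chart.

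The only mildly delicate point is making sure that ``biholomorphism of germs'' is correctly encoded by the pointwise non-vanishing of $b_1^i$ on the whole affine line $V_i$: this is exactly the condition that guarantees invertibility of the chart-wise Jacobian along all of $C\cap V_i$, so that $\Psi$ extends as a biholomorphism from $C$ to a full tubular germ. I do not anticipate any real obstacle beyond this bookkeeping.
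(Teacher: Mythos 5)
Your argument is correct and is exactly the standard gluing/restriction argument that the paper has in mind when it calls the proposition ``straightforward'' and omits the proof: the cocycle relation is well-definedness of $\Psi$ on the overlap, the identity-on-$C$ condition forces the stated $y$-expansions, and non-vanishing of $b_1^i$ is invertibility of the Jacobian along $C$. Nothing further is needed.
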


Since $H^1(\Pp^1, \OO_C^*) = \mathbb{Z}$ there exist $b^i \in \OO^*(V_i)$, $i=0,
\infty$, such that $b^{\infty} b_1  = x_0^k b^0$. Thus, the pair $\Psi^i(x_i,y_i)=(x_i,b^iy_i)$,
$i=0,\infty$, provides us with an equivalent cocycle such that $b_1(x_0)=x_0^k$. 
Now, this exactly means that $C\cdot C =-k$. As conclusion, $(+1)$-neighborhoods
can be defined by a cocycle of the form
$$
\Phi(x_0, y_0)=\left( \frac{1}{x_0} + \sum_{n\geq 1} a_{n}(x_0) y_{0}^{n}\ ,\ \frac{y_0}{x_0} + \sum_{n\geq 2} b_{n}(x_0) y_{0}^{n} \right)=:\left(\frac{1}{x_0} +a\ ,\ \frac{y_0}{x_0}+b\right).
$$

\subsection{Normal form}
Using the equivalence defined in Proposition \ref{prop:EquivalentCocycles} above,
we can reduce the cocycle $\Phi$ into an almost unique normal form:

\begin{thm}[Mishustin]\label{Mishustin}
Any germ $(S,C)$ of $(+1)$-neighborhood is biholomorphically equivalent to a germ $U_\Phi$
for a cocycle $\Phi$ of the following ``normal form''
\begin{equation}\label{Eq:NorForMishustin}
\Phi = \left( \frac{1}{x} + \sum_{V(-3,4)} a_{m,n}x^m y^n\   ,\  \frac{y}{x} + \sum_{V(-2,3)} b_{m,n} x^m y^n\right),
\end{equation}
where $V(k,l):=\{(m+k,n+l)\in\mathbb Z^2\ ;\ -n\le m\le 0\},\ \ \ (k,l)\in\mathbb Z^2$.

Moreover, when the neighborhood $(S,C)$ admits a fibration transverse to $C$, then one can choose 
all $a_{m,n}=0$ so that the fibration is given by $x_0=\frac{1}{x_\infty}:S\to C$.
\end{thm}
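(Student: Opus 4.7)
The plan is to normalize the cocycle $\Phi=(1/x_0+a,\,y_0/x_0+b)$ order by order in $y_0$, exploiting the equivalence of Proposition \ref{prop:EquivalentCocycles}. Expanding $a=\sum_{n\ge 1}a_n(x_0)y_0^n$ and $b=\sum_{n\ge 2}b_n(x_0)y_0^n$ with $a_n,b_n$ holomorphic on $V_0\cap V_\infty\simeq\CC^*$, I would write each $\Psi^i$ as
\[
\Psi^i(x_i,y_i)=\Bigl(x_i+\sum_{n\ge 1}\alpha_n^i(x_i)y_i^n,\ y_i+\sum_{n\ge 2}\beta_n^i(x_i)y_i^n\Bigr),
\]
after absorbing the residual constant freedom in $\beta_1^i$ to set $\beta_1^i\equiv 1$ on $V_i$. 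The decisive observation is that $\alpha_n^i$ and $\beta_n^i$ modify the cocycle only at orders $\ge n$ in $y_0$, which enables induction on $n$: assuming $a_k,b_k$ are already in the prescribed form for $k<n$, one solves for $\alpha_n^i,\beta_n^i$ at order $n$ and moves to the next order.

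Comparing degree-$n$ coefficients in the relation $\Phi'\circ\Psi^0=\Psi^\infty\circ\Phi$ produces cohomological equations of the shape
\[
a_n'(x_0)-a_n(x_0)=\frac{\alpha_n^\infty(1/x_0)}{x_0^n}+\frac{\alpha_n^0(x_0)}{x_0^2}+R_n^a,\quad b_n'(x_0)-b_n(x_0)=\frac{\beta_n^\infty(1/x_0)}{x_0^n}-\frac{\beta_n^0(x_0)}{x_0}+R_n^b,
\]
where $R_n^a,R_n^b$ depend only on the already-normalized data from orders $<n$. Since $\alpha_n^0,\beta_n^0$ are holomorphic at $x_0=0$ while $\alpha_n^\infty(1/x_0),\beta_n^\infty(1/x_0)$ are holomorphic at $x_0=\infty$, the first coboundary sweeps out every Laurent monomial $x_0^k$ with $k\ge -2$ or $k\le -n$, leaving the finite-dimensional complement $\CC\langle x_0^{-3},\ldots,x_0^{-(n-1)}\rangle$; the second sweeps out $k\ge -1$ or $k\le -n$, leaving $\CC\langle x_0^{-2},\ldots,x_0^{-(n-1)}\rangle$. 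These quotients match (\ref{Eq:NorForMishustin}) exactly, and the Birkhoff-type splitting of each Laurent coefficient into its $0$- and $\infty$-parts determines $\alpha_n^i,\beta_n^i$ uniquely modulo the finite-dimensional constants acting on these complements.

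For the fibration statement, a holomorphic retraction $\pi\colon(S,C)\to C$ transverse to $C$ yields a global coordinate on $(S,C)$ that restricts to $x_0$ on $V_0$ and to $x_\infty=1/x_0$ on $V_\infty$. Taking this coordinate as the first trivializing coordinate in each chart $U_i\simeq V_i\times\mathbb D_\epsilon$ forces the first component of $\Phi$ to be identically $1/x_0$, i.e.\ $a\equiv 0$. The subsequent normalization of $b$ only requires modifications of the form $\Psi^i(x,y)=(x,\,y+\sum\beta_n^i(x)y^n)$ that fix the first coordinate, so $a\equiv 0$ is preserved and the transverse fibration is indeed given by $x_0=1/x_\infty$.

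The main obstacle I anticipate lies not in the formal computation---essentially the Čech calculation of $H^1(\mathbb P^1,\mathcal O(2-n))\oplus H^1(\mathbb P^1,\mathcal O(1-n))$ at each order---but in the passage from a formal to a convergent normal form. This would require fixing a Laurent annulus $\{r<|x_0|<R\}$ on which all $a_n,b_n$ admit uniform bounds coming from the holomorphy of $\Phi$ on a polydisk neighborhood of $V_0\cap V_\infty$, performing the Birkhoff splittings via the Cauchy integral formula to obtain uniform estimates on $\alpha_n^i,\beta_n^i$, and running a standard majorant argument to show that the inductively constructed $\Psi^i$ converge as holomorphic automorphisms on a fixed polydisk neighborhood of $V_i$.
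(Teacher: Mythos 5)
Your order-by-order normalization is formally correct and takes a genuinely different route from the paper. The cohomological bookkeeping checks out: at order $n$ the coboundaries $\alpha_n^0(x_0)/x_0^2$ and $\alpha_n^\infty(1/x_0)/x_0^n$ (resp.\ $\beta_n^0(x_0)/x_0$ and $\beta_n^\infty(1/x_0)/x_0^n$) do sweep out exactly the complement of $\CC\langle x_0^{-3},\dots,x_0^{-(n-1)}\rangle$ (resp.\ $\CC\langle x_0^{-2},\dots,x_0^{-(n-1)}\rangle$), matching $\dim H^1(\Pp^1,\OO(2-n))=n-3$ and $\dim H^1(\Pp^1,\OO(1-n))=n-2$ and hence the shape of (\ref{Eq:NorForMishustin}); the fibration refinement also goes through since killing the $b$-part only uses modifications fixing the first coordinate. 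The paper proceeds quite differently: Lemma \ref{lem:prenormalf} blows up the two points $p_0,p_\infty$, invokes the rigidity of $0$-neighborhoods (Fischer--Grauert, Savelev) to get convergent trivializing coordinates on the blown-up surfaces, and reads off the support restrictions on $a$ and $b$ from holomorphy at $C^i\cap D_i$; this disposes of all but a boundary layer of coefficients in one stroke, with convergence built in. The residual normalization is then carried out by finitely many explicit substitutions $\Psi^i=(\alpha^i(y)x+\beta^i(y),\varphi^i(y))$ determined in closed form from the functions $f,g,h,k$ (formulae (\ref{eq:RightNormForm}) and (\ref{eq:LeftNormForm})), so convergence is again automatic.

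The genuine gap in your proposal is precisely the step you defer to ``a standard majorant argument.'' The formal normal form was already established by Hurtubise and Kamran, as the paper points out; the analytic content of Mishustin's theorem is that the infinite composition of the coordinate changes $\Psi^i$ converges to a biholomorphism on a \emph{fixed} neighborhood of $V_i$. This is not routine: each step of your induction is a Birkhoff splitting of a Laurent series on an annulus, the right-hand sides $R_n^a,R_n^b$ are nonlinear in all previously constructed data, and naive estimates force the annulus and the polydisks to shrink at each stage, so one must show the total shrinkage is controlled (a Grauert/Newton-type scheme or a carefully set up majorant series is needed, and it is exactly here that the problem was left open after Grauert and Kodaira). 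Until that estimate is actually carried out, your argument only proves the formal statement. This is why the paper's detour through the blow-up and the rigidity theorem is not a stylistic choice but the mechanism by which convergence is obtained for free.
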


As we shall see in the next section, this normal form is unique up to a $4$-dimensional group action.

\begin{remark}
This normal form was firstly established by Hurtubise and Kamran in \cite{Hurtubise},
but without convergence. They obtained it by an iterative procedure, where the $y^n$-part of the coefficients
are simplified at the $n^{\text{th}}$ step. In fact, when we modify the cocycle as in Proposition \ref{prop:EquivalentCocycles}, $\Phi'=\Psi_\infty\circ\Phi\circ\Psi_0^{-1}$,
it can be shown that:
\begin{itemize}
\item[] $\Psi_0$ (resp. $\Psi_\infty$) can be choosen at each step so as to eliminate all coefficients $a_{m,n},b_{m,n}$ with $(m,n)$ lying 
in the corresponding pink (resp. yellow)  area in Figure \ref{Fig:Normal Form}.

\end{itemize}
Mishustin's proof in \cite{Mishustin} is of very different nature though. 
Denote $p_0$ and $p_\infty$ the points $x_0=0$ and $x_\infty=0$ in $C$. After blowing-up
$p_\infty$, the neighborhood of the strict transform of $C$ becomes a product $C\times(\CC,0)$
with global coordinates $(x_0,y_0)$; these define coordinates on some neighborhood $U_0$ of $V_0$ in $U$. By blowing-up $p_0$ instead of $p_\infty$, we get coordinates $(x_\infty,y_\infty)$ on a neighborhood of $U_\infty$ of $V_\infty$. It turns out that this choice of coordinates provide the normal form (in a geometric, and therefore convergent way)
up to a last normalization. Full details are given in \cite{MaycolFrank}.
\end{remark}

\begin{figure}[ht!]
\includegraphics[scale=0.4]{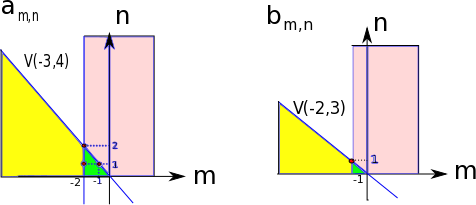}
\caption{Normal Form}
\label{Fig:Normal Form}
\end{figure}

\subsection{Isotropy group for normal forms}
During the proof of Theorem \ref{Mishustin}, we have the possibility to normalize coefficients 
$$a_{-1,1}=a_{-2,1}=a_{-2,2}=0\ \ \ \text{and}\ \ \ b_{-1,1}=1$$
by either using $\Psi^0$ or $\Psi^\infty$ (the green intersecting area of Figure \ref{Fig:Normal Form}). This underline a $4$-parameter degree of freedom in the choice
of normalizing coordinates systems $(x_0,y_0)$ and $(x_\infty,y_\infty)$.

For instance, if $\Phi_0=(\frac{1}{x},\frac{y}{x})$ is the linear neighborhood, then we know that it admits 
the following family of automorphisms:
$$\left(\frac{x_\infty+\alpha y_\infty}{1+\beta y_\infty},\frac{\theta y_\infty}{1+\beta y_\infty}\right)\circ\Phi_0=
\Phi_0\circ\left(\frac{x_0+\beta y_0}{1+\alpha y_0},\frac{\theta y_0}{1+\alpha y_0}\right).$$
We will see that this group acts on the set of normal forms (\ref{Eq:NorForMishustin}), 
having $\Phi_0$ as fixed point.
On the other hand, we can easily check that if $\Phi$ is in normal form, then 
$$(x_\infty+\gamma y_\infty^2,y_\infty)\circ\Phi=\Phi'\circ(x_0-\gamma y_0^2,y_0)$$
gives another a new normal form. The $4$-parameter of freedom is a combination 
of those two actions. 

\begin{prop}\label{PropMishustinFreedom}
Consider a cocycle in normal form
$$
\Phi = \left( \frac{1}{x} + \sum_{V(-3,4)} a_{m,n}x^m y^n\   ,\  \frac{y}{x} + \sum_{V(-2,3)} b_{m,n} x^m y^n\right).
$$
Then an equivalent cocycle 
$\Psi^\infty\circ\Phi=\Phi'\circ\Psi^0$
is also in normal form if, and only if, there are constants $\alpha,\beta,\gamma\in\mathbb C$ 
and $\theta\in\mathbb C^*$
such that
\begin{equation}\label{eq:FreedomMishustin}\begin{matrix}
\Psi^\infty&=&\left(\frac{x_\infty+\alpha y_\infty}{1+\beta y_\infty}+\gamma\left(\frac{y_\infty}{1+\beta y_\infty}\right)^2+\frac{\beta k^\infty(y_\infty)}{(1+\beta y_\infty)^2}\ ,\ \frac{\theta y_\infty}{1+\beta y_\infty}\right)\\
\Psi^0&=&\left(\frac{x_0+\beta y_0}{1+\alpha y_0}-\gamma\left(\frac{y_0}{1+\alpha y_0}\right)^2-\frac{\alpha k^0(y_0)}{(1+ \alpha y_0)^2}
\ ,\ 
\frac{\theta y_0}{1+\alpha y_0}\right)
\end{matrix}\end{equation}
where $k^0(y_0)=\sum_{n\ge3}b_{-2,n}y_0^n$ and $k^\infty(y_\infty)=\sum_{n\ge3}b_{-n+1,n}y_\infty^n$.
\end{prop}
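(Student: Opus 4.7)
The "if" direction is a direct verification: substituting (\ref{eq:FreedomMishustin}) into $\Phi'=\Psi^\infty\circ\Phi\circ(\Psi^0)^{-1}$, one checks that the supports of the two components of $\Phi'$ remain in $V(-3,4)$ and $V(-2,3)$. This can be streamlined by decomposing the four-parameter family into commuting one-parameter subgroups (the scaling $\theta$, the M\"obius deformations indexed by $\alpha$ and $\beta$, and the quadratic deformation $\gamma$) and verifying normal-form preservation on each piece separately.

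For the "only if" direction, suppose $\Phi'$ is also in normal form. Since normal form is a special case of prenormal form, Lemma \ref{lem:prenormalf} forces $\Psi^i(x,y)=(\alpha^i(y)x+\beta^i(y),\varphi^i(y))$ with $\alpha^i(0)=1$, $\beta^i(0)=0$, and ${\varphi^i}'(0)\ne 0$. Expanding the identity $\Psi^\infty\circ\Phi=\Phi'\circ\Psi^0$ as a power series in $(x_0,y_0)$, the leading $1/x_0$-coefficient of the $y$-component yields $\alpha^0(y_0)=\varphi^0(y_0)/(\theta y_0)$ where $\theta:={\varphi^\infty}'(0)$; the symmetric computation at $p_\infty$ expresses $\alpha^\infty$ in terms of $\varphi^\infty$. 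Thus $\alpha^0,\alpha^\infty$ are determined by $\varphi^0,\varphi^\infty$ and the scalar $\theta$.

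The geometric characterization of normal form in Remark \ref{rk:geometricNormForm} now enters. The linearity of $y_i$ on the exceptional divisor $D_i$ forces $\varphi^i$ to be the restriction of a M\"obius automorphism of $D_i\simeq\mathbb{P}^1$ fixing $\{y_i=0\}$; combined with prenormal-form compatibility between the two charts of $C=V_0\cup V_\infty$, this produces $\varphi^0(y)=\theta y/(1+\alpha y)$ and $\varphi^\infty(y)=\theta y/(1+\beta y)$ for scalars $\alpha,\beta,\theta$. The contact-of-order-$3$ condition between the two fibrations along $D_i$ then constrains $\beta^i$: once $\varphi^i$ and $\alpha^i$ are fixed, $\beta^i$ is pinned down up to a single scalar $\gamma$ parametrizing a quadratic deformation that preserves the order-$3$ jet of contact.

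The main obstacle is justifying the precise form of the correction terms involving $k^0$ and $k^\infty$ in $\beta^i$. When $\alpha$ or $\beta$ is nonzero, the nontrivial M\"obius part of $\varphi^i$ mixes the ``boundary'' coefficients of the $b$-component of $\Phi$ (those on the edges $m=-2$ and $m+n=1$ of the support $V(-2,3)$) into positions forbidden by the normal form. The correction terms $\beta k^\infty(y_\infty)/(1+\beta y_\infty)^2$ in $\Psi^\infty$ and $-\alpha k^0(y_0)/(1+\alpha y_0)^2$ in $\Psi^0$ are designed precisely to cancel this mixing, as one verifies by extracting and matching the boundary coefficients on both sides of $\Psi^\infty\circ\Phi=\Phi'\circ\Psi^0$. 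Assembling all the determined data in terms of the four scalars $(\alpha,\beta,\gamma,\theta)$ then recovers the formula (\ref{eq:FreedomMishustin}).
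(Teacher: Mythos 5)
Your overall strategy is sound and, for the ``only if'' half, genuinely different from the paper's. The paper disposes of ``only if'' by appealing to the four degrees of freedom left over in the normalization algorithm of Theorem~\ref{Mishustin}, whereas you reconstruct the normalizing pair directly: Lemma~\ref{lem:prenormalf} forces the affine-in-$x$ shape of $\Psi^i$, the leading $1/x_0$-coefficient ties $\alpha^0$ to $\varphi^0$ and $\theta$, and the geometric characterization of Remark~\ref{rk:geometricNormForm} (linearity of $y_i$ on $D_i$, order-$3$ contact of the two fibrations along $D_i$) forces $\varphi^i$ to be M\"obius and constrains $\beta^i$. This route is more self-contained and arguably more transparent than the paper's, which asks the reader to track uniqueness through the normalization procedure. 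For the ``if'' half you and the paper do the same thing: decompose into one-parameter pieces and check each. One slip: these subgroups do \emph{not} commute --- the group law computed before Corollary~\ref{Cor:ActionLinGroup} is $\vartheta_3=(\alpha_2+\theta_2\alpha_1,\beta_2+\theta_2\beta_1,\gamma_2+\theta_2^2\gamma_1,\theta_1\theta_2)$, so the $\theta$-scaling fails to commute with the other factors. This is harmless, since an ordered product decomposition suffices, but the word ``commuting'' should go.

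The weak point is exactly where you flag it: the correction terms $k^0,k^\infty$. You correctly identify the boundary coefficients $b_{-2,n}$ and $b_{-n+1,n}$ of $V(-2,3)$ as the source of the contamination, and you correctly describe what the corrections must accomplish, but you do not derive them --- and this is the actual content of formula~(\ref{eq:FreedomMishustin}). The paper obtains them with no extra effort by feeding the perturbed cocycle back into the explicit normalization formulas (\ref{eq:RightNormForm}) and (\ref{eq:LeftNormForm}) from the proof of Theorem~\ref{Mishustin}: for instance, $\Psi^\infty=(x_\infty+\alpha y_\infty,y_\infty)$ creates the term $\alpha k^0(y)/x^2$ in the first component of $\Psi^\infty\circ\Phi$, and (\ref{eq:RightNormForm}) applied with $f=1+\alpha y$, $g=\alpha k^0$, $h=y$ returns precisely the $\Psi^0$ of the statement. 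Either carry out that computation or cite those two formulas explicitly; as written, ``as one verifies by extracting and matching the boundary coefficients'' stands in for the only nontrivial calculation in the proof.
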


\begin{proof}
See \cite{MaycolFrank}, page $20$ for a detailed proof.
\end{proof}
For instance, starting with the linear neighborhood $\Phi_0=(\frac{1}{x},\frac{y}{x})$, then we obtain
the following equivalent cocycles in normal form (with $c=\frac{\gamma}{\theta^2}\in\mathbb C$ arbitrary)
$$\Phi=\left( \frac{1}{x}\frac{(1+2c\frac{y^2}{x})}{(1+c\frac{y^2}{x})^2} \ ,\  \frac{y}{x}\frac{1}{(1+c\frac{y^2}{x})}\right)\ \ \ \sim\ \ \ \Phi_0.$$

We promptly deduce from Proposition \ref{PropMishustinFreedom} that any change of normalization
$$\Phi'=\Psi^\infty\circ\Phi\circ(\Psi^0)^{-1}$$
of a given cocycle in normal form $\Phi$ is determined by the quadratic part of $\Psi^0$:
\begin{equation}\label{QuadPartNorm}
\Psi^0=(x+(\beta-\alpha x)y+(\alpha^2 x-(\alpha\beta+\gamma))y^2+\cdots\ ,\ 
\theta y-\theta \alpha y^2+\cdots).
\end{equation}
Conversely, for any $\vartheta=(\alpha,\beta,\gamma,\theta)\in\mathbb C^3\times\mathbb C^*$,
the above quadratic part can be extended as a new normalization $(\Psi^0_{\vartheta,\Phi},\Psi^{\infty}_{\vartheta,\Phi})$ for each cocycle $\Phi$ in normal form. 
We thus get an action of $\mathbb C^3\times\mathbb C^*$ on the set of normal forms
$$(\vartheta,\Phi)\ \mapsto\ \vartheta\cdot\Phi:=\Psi^\infty_{\vartheta,\Phi}\circ\Phi\circ(\Psi^0_{\vartheta,\Phi})^{-1}$$
with the group law given by 
$$\begin{matrix}
\vartheta_1\cdot(\vartheta_2\cdot\Phi)&=&\Psi^\infty_{\vartheta_1,\Phi'}\circ\left(\underbrace{\Psi^\infty_{\vartheta_2,\Phi}\circ\Phi\circ(\Psi^0_{\vartheta_2,\Phi})^{-1}}_{\Phi'}\right)\circ(\Psi^0_{\vartheta_1,\Phi'})^{-1}&&\\
&=&\left(\Psi^\infty_{\vartheta_1,\Phi'}\circ\Psi^\infty_{\vartheta_2,\Phi}\right)\circ\Phi\circ\left(\Psi^0_{\vartheta_1,\Phi'}\circ\Psi^0_{\vartheta_2,\Phi}\right)^{-1}&&\\
&=&\Psi^\infty_{\vartheta_3,\Phi}\circ\Phi\circ(\Psi^0_{\vartheta_3,\Phi})^{-1}&=&\vartheta_3\cdot\Phi
\end{matrix}$$
This group law can be easily computed by composing the quadratic parts (\ref{QuadPartNorm}) of 
$\Psi^0_{\vartheta_1}$ and $\Psi^0_{\vartheta_2}$, and we get
$$\vartheta_3=(\alpha_2+\theta_2\alpha_1,\beta_2+\theta_2\beta_1,\gamma_2+\theta_2^2\gamma_1,\theta_1\theta_2).$$
In other words, the group law on parameters $\vartheta=(\alpha,\beta,\gamma,\theta)$ is equivalent
to the matrix group law
$$\Gamma:=\left\{\begin{pmatrix}1&\alpha&\beta&\gamma\\ 0&\theta&0&0\\
0&0&\theta&0\\ 0&0&0&\theta^2\end{pmatrix}\ ,\ (\alpha,\beta,\gamma,\theta)\in\mathbb C^3\times\mathbb C^*\right\}\ \ \ \subset\ \mathrm{GL}_4(\mathbb C).$$
We deduce:

\begin{cor}\label{Cor:ActionLinGroup}
The $4$-dimensional matrix group $\Gamma$ acts on the set of normal forms (\ref{Eq:NorForMishustin})
as defined in Proposition \ref{PropMishustinFreedom} and the set of equivalence classes is in one-to-one
correspondance with the set of isomorphisms classes of germs of $(+1)$-neighborhoods $(S,C)$ of 
the rational curve $C\simeq\mathbb P^1_x$ (with fixed coordinate $x$).
\end{cor}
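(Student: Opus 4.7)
The corollary is essentially a bookkeeping consequence of Theorem \ref{Mishustin} together with Proposition \ref{PropMishustinFreedom}, and the plan is to verify three items in turn: (a) the prescription $(\vartheta,\Phi)\mapsto\vartheta\cdot\Phi$ really lands back in the set of normal forms; (b) it defines a left action whose group law matches the matrix product in $\Gamma$; (c) the $\Gamma$-orbits correspond bijectively to isomorphism classes of germs of $(+1)$-neighborhoods with the chosen coordinate $x$ on $C$.

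Item (a) is immediate: Proposition \ref{PropMishustinFreedom} exhibits the pair $(\Psi^0_{\vartheta,\Phi},\Psi^\infty_{\vartheta,\Phi})$ explicitly in (\ref{eq:FreedomMishustin}) and asserts that the resulting cocycle $\Psi^\infty_{\vartheta,\Phi}\circ\Phi\circ(\Psi^0_{\vartheta,\Phi})^{-1}$ is again in normal form. For (b), the key observation is (\ref{QuadPartNorm}): the whole normalizing pair is rigidly determined by the quadratic $2$-jet of $\Psi^0_{\vartheta,\Phi}$, which in turn depends only on $\vartheta$ and not on $\Phi$. Therefore composing two successive normalizations reduces to composing their quadratic $2$-jets, and the short calculation reproduced in the excerpt yields the composition law $\vartheta_3=(\alpha_2+\theta_2\alpha_1,\beta_2+\theta_2\beta_1,\gamma_2+\theta_2^2\gamma_1,\theta_1\theta_2)$. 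A direct matrix multiplication in $\Gamma$ produces exactly this formula, so the bijection $\vartheta\leftrightarrow M_\vartheta$ is a group isomorphism and the action axioms follow.

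For (c), surjectivity of the natural map from $\Gamma$-orbits to isomorphism classes is Theorem \ref{Mishustin}: every germ admits a normal-form representative. Injectivity is where Proposition \ref{PropMishustinFreedom} bites: if two normal forms $\Phi,\Phi'$ define isomorphic germs, then by Proposition \ref{prop:EquivalentCocycles} there is an intertwining pair $(\Psi^0,\Psi^\infty)$ with $\Phi'\circ\Psi^0=\Psi^\infty\circ\Phi$, and the preservation of normal form forces this pair to lie in the four-parameter family (\ref{eq:FreedomMishustin}), so $\Phi'=\vartheta\cdot\Phi$ for some $\vartheta\in\Gamma$.

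The only place I expect any friction is the group-law verification in (b): one has to confirm that no higher-order terms of $\Psi^0_{\vartheta,\Phi}$ above degree two contaminate the composition. The content of (\ref{QuadPartNorm}) is precisely that these higher-order terms are uniquely determined by the quadratic truncation together with $\Phi$, so they do not introduce any extra parameters into $\vartheta_3$. Once that rigidity is granted, the remaining work is a routine matrix-group identification, and the orbit-class correspondence is then immediate from the two preceding propositions.
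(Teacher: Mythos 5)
Your proposal is correct and follows essentially the same route as the paper: the authors likewise derive the corollary directly from Proposition~\ref{PropMishustinFreedom}, the rigidity statement~(\ref{QuadPartNorm}) that the normalizing pair is determined by the quadratic $2$-jet of $\Psi^0$ together with $\Phi$, the resulting composition law $\vartheta_3=(\alpha_2+\theta_2\alpha_1,\beta_2+\theta_2\beta_1,\gamma_2+\theta_2^2\gamma_1,\theta_1\theta_2)$ identified with matrix multiplication in $\Gamma$, and the existence of normal forms from Theorem~\ref{Mishustin} combined with Proposition~\ref{prop:EquivalentCocycles} for the orbit--isomorphism-class correspondence. The one point you flag as potential friction (contamination by higher-order terms) is exactly the point the paper disposes of via~(\ref{QuadPartNorm}), so your treatment matches theirs.
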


Let us describe this action on the first coefficients of the cocycle $\Phi'=\vartheta\cdot\Phi$:
\begin{equation}\label{eq:ActionCoeffCocycle}
\left\{\begin{matrix}
a_{-3,4}'=&\frac{a_{-3,4}-\gamma^2+2\gamma b_{-2,3}-\beta b_{-2,4}+\alpha b_{-3,4} }{\theta^4}\hfill\\
a_{-3,5}'=&\frac{a_{-3,5}+\alpha a_{-3,4}+(2\gamma-\alpha\beta) b_{-2,4}+\alpha^2 b_{-3,4} -\beta b_{-2,5} +\alpha b_{-3,5}}{\theta^5}\hfill\\
a_{-4,5}'=&\frac{a_{-4,5}+2\beta a_{-3,4} +3\beta b_{-2,3}^2 -\beta^2 b_{-2,4}+(\alpha\beta+2\gamma) b_{-3,4} -\beta b_{-3,5} +\alpha b_{-4,5}}{\theta^5}\\
&\cdots\hfill
\end{matrix}\right.
\end{equation}
$$\left\{\begin{matrix}
b_{-2,3}'=\frac{b_{-2,3}-\gamma}{\theta^2}\hfill & & \\
b_{-2,4}'=\frac{b_{-2,4}}{\theta^3}\hfill& b_{-3,4}'=\frac{b_{-3,4}}{\theta^3}\hfill &\\
b_{-2,5}'=\frac{b_{-2,5}+\alpha b_{-2,4}}{\theta^4}& b_{-3,5}'=\frac{b_{-3,5}+\alpha b_{-3,4}-2\gamma b_{-2,3}+\gamma^2}{\theta^4}\hfill & b_{-4,5}'=\frac{b_{-4,5}+\beta b_{-3,4}}{\theta^4}\\
\cdots&
\end{matrix}\right.$$

\subsection{Existence of transverse fibration}

We go back to the notion of transversal fibration by discs on $(S,C)$ considered in \ref{sec:TransvFibration}.
If we have such a fibration, it can be defined by a submersion $H:S\to C$ inducing the identity on $C$;
equivalently, after composition with $x:C\to\mathbb P^1$ we get an extension of the coordinate $x$
on the neighborhood $S$. In this case, recall (see Theorem \ref{Mishustin}) that one can choose 
a normal form with zero $a$-part:
$$\Phi(x,y)=\left( \frac{1}{x}   ,  \frac{y}{x} + \sum_{V(-2,3)}b_{m,n}x^my^n\right)$$
compatible with the fibration in the sense that $x\circ H=x_0=\frac{1}{x_\infty}$. 

\begin{prop}\label{prop:NormalFormFibration}
A $(+1)$-neighborhood with normal form $\Phi$ admits a transversal fibration if, and only if,
there is a $\vartheta=(\alpha,\beta,\gamma,1)\in\Gamma$ such that the $a$-part of $\vartheta\cdot\Phi$
is trivial. Moreover, the set of transversal fibrations is in one-to-one correspondance with the set
of those $(\alpha,\beta,\gamma)\in\mathbb C^3$ for which the $a$-part of $(\alpha,\beta,\gamma,1).\Phi$
is zero.
\end{prop}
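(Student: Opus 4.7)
The plan is to reduce both parts to Theorem~\ref{Mishustin} and Corollary~\ref{Cor:ActionLinGroup}, and then to pin down the residual freedom once a fibration has been chosen.

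First I would prove the equivalence. If $(S,C)$ carries a transversal fibration, the second assertion of Theorem~\ref{Mishustin} supplies a normal form $\Phi'$ of $(S,C)$ with trivial $a$-part, and Corollary~\ref{Cor:ActionLinGroup} yields $\Phi'=\vartheta\cdot\Phi$ for some $\vartheta\in\Gamma$. Conversely, if $\vartheta\cdot\Phi$ has trivial $a$-part, its first component is $x_\infty=1/x_0$ on the overlap, so $x_0$ on $U_0$ and $1/x_\infty$ on $U_\infty$ glue into a holomorphic map $H:S\to C$ restricting to the identity on $C$; being submersive along $C$ it defines a transverse fibration in a neighborhood of $C$.

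For the one-to-one correspondence I would introduce
$$X:=\{(\alpha,\beta,\gamma)\in\mathbb{C}^3\ :\ (\alpha,\beta,\gamma,1)\cdot\Phi\ \text{has trivial}\ a\text{-part}\}$$
and build mutually inverse maps between $X$ and the set of transverse fibrations on $(S,C)$. From $(\alpha,\beta,\gamma)\in X$ the preceding paragraph directly supplies a fibration. In the other direction, given a fibration $H$, I would pick any $\vartheta=(\alpha,\beta,\gamma,\theta)\in\Gamma$ with $\vartheta\cdot\Phi$ a zero-$a$-part normal form realising $H$ as $x_0=1/x_\infty$, and then act further by $(0,0,0,\theta^{-1})\in\Gamma$: formula (\ref{eq:FreedomMishustin}) shows this element merely rescales $y$ and fixes $x$ in each chart, so it preserves $H$, while the group law recalled before Corollary~\ref{Cor:ActionLinGroup} gives $(0,0,0,\theta^{-1})\cdot(\vartheta\cdot\Phi)=(\alpha,\beta,\gamma,1)\cdot\Phi$. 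The triple $(\alpha,\beta,\gamma)$ thus belongs to $X$.

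The last and main step, which I expect to be the principal obstacle, is injectivity: two triples in $X$ inducing the same fibration must agree. A fibration determines uniquely an extension $\tilde x_0$ of $x|_C$ on $U_0$, because two submersive extensions with the same level sets differ by a biholomorphism of $\mathbb{P}^1$ restricting to the identity on $x(C)=\mathbb{P}^1$, hence are equal. Taylor-expanding the first component of $\Psi^0$ from (\ref{eq:FreedomMishustin}) (with $\theta=1$) at $y_0=0$ gives
$$\tilde x_0=x_0+(\beta-\alpha x_0)y_0+(\alpha^2 x_0-\alpha\beta-\gamma)y_0^2+\cdots,$$
and $(\alpha,\beta)$ can be read off as the linear and constant coefficients of the $y_0$-term viewed as a polynomial in $x_0$, and then $\gamma$ from the $y_0^2$-coefficient. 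This settles the bijection.
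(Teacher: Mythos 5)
Your proof is correct and follows essentially the same route as the paper: both parts reduce to Theorem~\ref{Mishustin} together with the $\Gamma$-action of Corollary~\ref{Cor:ActionLinGroup}, and your Taylor-coefficient argument for injectivity is just an explicit verification of the paper's one-line observation that the fibration $x_0=1/x_\infty$ is preserved only by the subgroup $(0,0,0,\theta)$, so that fibrations correspond to the $\theta=1$ slice. No gaps.
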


\begin{proof} Just observe that, once we get a normal form with trivial $a$-part, 
the fibration given by $x_0=1/x_\infty$ is only preserved by the action of 
$(0,0,0,\theta)$. We thus have to divide the group action by this normal subgroup
to get a bijection with the set of fibrations.
\end{proof}

It is clear that we cannot kill the $a$-part in general by means of the above $3$-dimensional group action
and therefore that we have infinitely many obstructions to have a transversal fibration.

\begin{prop}\label{prop:4NeighbFibr}
Any $(+1)$-neighborhood admits a normal form with $a$-part vanishing up to the order $4$ in $y$-variable, 
i.e. with $a_{-3,4}=0$.
In other words, the $4^{\text{th}}$ infinitesimal neighborhood always admit a transverse fibration; the first obstruction 
to extend it arrives at order $5$.
\end{prop}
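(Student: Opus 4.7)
The plan is to apply the explicit $\Gamma$-action on normal forms. By Corollary \ref{Cor:ActionLinGroup}, every $(+1)$-neighborhood is represented by some normal form $\Phi$, and equivalence classes of normal forms are precisely $\Gamma$-orbits; hence it suffices to produce $\vartheta=(\alpha,\beta,\gamma,\theta)\in\Gamma$ such that the coefficient $a'_{-3,4}$ of $\vartheta\cdot\Phi$ vanishes.

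The first line of (\ref{eq:ActionCoeffCocycle}) gives
$$\theta^{4}\,a'_{-3,4}=a_{-3,4}-\gamma^{2}+2\gamma\,b_{-2,3}-\beta\,b_{-2,4}+\alpha\,b_{-3,4},$$
so the condition $a'_{-3,4}=0$ reduces to a single polynomial equation in $(\alpha,\beta,\gamma)$, affine in $\alpha$ and $\beta$ and quadratic in $\gamma$. I would conclude by a simple trichotomy: if $b_{-3,4}\neq 0$, solve linearly for $\alpha$ with $(\beta,\gamma)$ arbitrary; otherwise if $b_{-2,4}\neq 0$, solve for $\beta$; and if both vanish, the equation collapses to $\gamma^{2}-2b_{-2,3}\,\gamma-a_{-3,4}=0$, which always has a root in $\CC$. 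In every case one obtains a non-empty algebraic family of admissible $(\alpha,\beta,\gamma)$, with $\theta$ still entirely free.

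The translation to infinitesimal neighborhoods is immediate from Proposition \ref{prop:NormalFormFibration}: at order $\le 4$ in $y$ the $a$-part of the normal form (\ref{Eq:NorForMishustin}) consists of the single monomial $a_{-3,4}\,y^{4}/x^{3}$, so once this coefficient is killed the coordinate $x_{0}=1/x_{\infty}$ extends as a transverse fibration on the $4^{\text{th}}$ infinitesimal neighborhood. For the closing remark that the first obstruction appears at order $5$, the plan is to observe that the constraint $a'_{-3,4}=0$ cuts the $(\alpha,\beta,\gamma)$-space down to a $2$-dimensional residual family (the scalar $\theta$ only rescales and cannot enforce vanishing), whereas the $a$-part at order $5$ carries the two new coefficients $a_{-3,5}$ and $a_{-4,5}$. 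The main obstacle I anticipate is exhibiting a concrete normal form for which the induced map from this residual $2$-parameter family into the $(a'_{-3,5},a'_{-4,5})$-plane fails to be surjective; this would give a neighborhood whose $5^{\text{th}}$ infinitesimal neighborhood carries no transverse fibration, legitimating the final assertion.
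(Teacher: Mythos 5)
Your proof of the first assertion is correct and is essentially the paper's own argument: read off the first line of (\ref{eq:ActionCoeffCocycle}) and solve the resulting polynomial equation in $(\alpha,\beta,\gamma)$, with $\theta$ irrelevant since it only rescales. Your trichotomy is harmless but superfluous: the last case already covers all situations, since one may always take $\alpha=\beta=0$ and solve the quadratic $\gamma^{2}-2b_{-2,3}\gamma-a_{-3,4}=0$ over $\CC$, which is what the paper does.

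The genuine gap is in the final assertion, that the first obstruction \emph{arrives} at order $5$. This is an existence claim -- some neighborhood must admit no normal form with $a$-part vanishing to order $5$ -- and your dimension count cannot deliver it: after imposing $a'_{-3,4}=0$ you retain a $2$-parameter family of $(\alpha,\beta,\gamma)$ and must kill exactly two new coefficients $a'_{-3,5}$ and $a'_{-4,5}$, so counting equations against unknowns would, if anything, suggest the order-$5$ part is generically killable too. (Note also that the relevant condition is that the value $(0,0)$ is not attained by the map to the $(a'_{-3,5},a'_{-4,5})$-plane, not that the map fails to be surjective.) You explicitly defer the needed example as an ``anticipated obstacle,'' so as written your argument only shows the first obstruction arrives at order $\ge 5$. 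The missing step is short and is exactly the paper's Example \ref{ex:WithoutFibration}, proved jointly with the proposition: take $a_{-3,5}=1$ and all other coefficients $a_{m,n},b_{m,n}$ with $n\le 5$ equal to zero; then in the formula of (\ref{eq:ActionCoeffCocycle}) for $a'_{-3,5}$ every term except $a_{-3,5}$ itself vanishes, so $a'_{-3,5}=\theta^{-5}\neq 0$ for every $\vartheta\in\Gamma$, and by Proposition \ref{prop:NormalFormFibration} no transverse fibration exists on the $5^{\text{th}}$ infinitesimal neighborhood.
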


\begin{example}\label{ex:WithoutFibration}
The neighborhood $U_\Phi$ given by the cocycle in normal form
$$
\Phi = \left( \frac{1}{x}+ \frac{y^5}{x^3} +\sum_{V(-3,4)\cap \{n>5\}}a_{m,n}x^my^n, \frac{y}{x}  + \sum_{V(-2,3)\cap \{n>5\}}b_{m,n}x^my^n \right)
$$
does not admit transversal fibration. 
\end{example}

\begin{proof}[Proof of Proposition \ref{prop:4NeighbFibr} and Example \ref{ex:WithoutFibration}]
Looking back at the explicit action (\ref{eq:ActionCoeffCocycle}) of $\Gamma$ on the $a$-coefficients,
we see that whatever is $a_{-3,4}$, we can assume $a_{-3,4}'=0$ by setting $\alpha=\beta=0$, $\theta=1$
and  $\gamma^2=a_{-3,4}$. On the other hand, the coefficient $a_{-3,5}$ cannot be killed in general,
and in particular in the example. Indeed, since all other coefficients $a_{m,n},b_{m,n}$ occuring in $a_{-3,5}'$ - formula (\ref{eq:ActionCoeffCocycle}) - are zero,
we see that 
$a_{-3,5}'=\theta^{-5}\not=0$ whatever are $(\alpha,\beta,\gamma)\in\mathbb C^3$.
\end{proof}

\begin{example}\label{ex:WithOneFibration}
The neighborhood $U_\Phi$ given by the cocycle in normal form
$$
\Phi = \left( \frac{1}{x}, \frac{y}{x}  + \sum_{V(-2,3)\cap \{n\ge5\}}b_{m,n}x^my^n \right)\ \ \ \text{with}\ \ \ b_{-2,5}b_{-4,5}\not=b_{-3,5}
$$
admits no other transversal fibration than $dx=0$.
Indeed, following Proposition \ref{prop:NormalFormFibration}, another fibration would correspond
to a triple $(\alpha,\beta,\gamma)\not=(0,0,0)$ such that the $a$-part of $(\alpha,\beta,\gamma,1).\Phi$
is zero. However, formula (\ref{eq:ActionCoeffCocycle}) gives
$$a_{-3,4}'=-\gamma^2,\ \ \ a_{-3,5}'=\alpha b_{-3,5}-\beta b_{-2,5}\ \ \ \text{and}\ \ \ 
a_{-4,5}'=\alpha b_{-4,5} -\beta b_{-3,5}$$
which shows that we must have $\alpha=\beta=\gamma=0$.
\end{example}

From previous examples, we understand that neighborhoods with exactly one transversal fibration 
have infinite dimension and codimension in the moduli of all $(+1)$-neighborhoods.

\section{Neighborhoods with several transverse fibrations}\label{sec:transvfibr}
In this section, we study $(+1)$-neighborhoods having several fibrations. 
The following was recently announced and partly proved in \cite{CRAS};
Paulo Sad and the first author gave another proof in \cite{MaycolPaulo}.

\begin{thm}[\cite{CRAS,MaycolPaulo}]\label{3-fibrations}
If a germ $(S,C)$ of $(+1)$-neighborhood admits at least $3$ distinct fibrations $\mathcal H$, $\mathcal H'$ and $\mathcal H''$ transversal to $C$, 
then $(S, C)$ is equivalent to $(\PP, L)$, where $L$ is a line in $\PP$. 
 \end{thm}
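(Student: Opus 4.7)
The plan is to apply Mishustin's normal form (Theorem \ref{Mishustin}) and the explicit description of its ambiguity by the $4$-dimensional group $\Gamma$ (Proposition \ref{PropMishustinFreedom}) in order to turn the hypothesis into an algebraic problem on the Mishustin coefficients $b_{m,n}$ of the cocycle $\Phi$. Using one of the three fibrations together with the last statement of Theorem \ref{Mishustin}, we may put $\Phi$ in normal form with $a\equiv 0$; the chosen fibration is then $x_0 = 1/x_\infty$. By Proposition \ref{prop:NormalFormFibration}, the remaining two fibrations correspond to two distinct non-zero triples $(\alpha_i,\beta_i,\gamma_i)\in\CC^3$ ($i=1,2$) such that the $a$-part of $(\alpha_i,\beta_i,\gamma_i,1)\cdot\Phi$ vanishes identically. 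The goal is to show that the simultaneous existence of these two non-trivial triples forces every $b_{m,n}$ to be zero, hence $\Phi = (1/x,y/x)$, and then $(S,C) \simeq (\PP, L)$ by Proposition \ref{prop:ProjectiveSurface}.

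Specializing the formulae (\ref{eq:ActionCoeffCocycle}) at $\theta=1$ and $a\equiv 0$ yields an infinite system of polynomial relations; the first three read
\begin{align*}
-\gamma^{2}+2\gamma b_{-2,3}-\beta b_{-2,4}+\alpha b_{-3,4} &= 0,\\
(2\gamma-\alpha\beta)\,b_{-2,4}+\alpha^{2}b_{-3,4}-\beta b_{-2,5}+\alpha b_{-3,5} &= 0,\\
3\beta b_{-2,3}^{2}-\beta^{2}b_{-2,4}+(\alpha\beta+2\gamma)b_{-3,4}-\beta b_{-3,5}+\alpha b_{-4,5} &= 0.
\end{align*}
Plugging in both triples produces six polynomial relations in the six lowest coefficients $b_{-2,3}, b_{-2,4}, b_{-3,4}, b_{-2,5}, b_{-3,5}, b_{-4,5}$, and a direct case analysis on the triples (distinguishing according to which of $\alpha_i,\beta_i,\gamma_i$ vanish) should show that, because they are distinct and non-zero, the only common solution is the trivial one. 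The argument then proceeds by induction on the bidegree $n$: at each order $n \geq 5$, the $a$-support contributes $n-3$ new equations per triple (hence $2(n-3)$ in total) against only $n-2$ new $b$-coefficients of that order, so the system is increasingly over-determined, and the previously obtained vanishings propagate the conclusion $b_{m,n}=0$ to all higher orders.

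A more geometric alternative is to work on the dual $(U,\Pi)$: via Proposition \ref{pencil-vs-flat-vs-fibration}, each fibration becomes a pencil of geodesic foliations of $\Pi$, and one would try, using the $3$-web/pencil machinery of Propositions \ref{unique-pencil-3-web} and \ref{unique-structure-compatible-4-web}, to produce two distinct points $p_1,p_2 \in C_0$ at which the family of deformations of $C_0$ is contained in a pencil, and then conclude via Proposition \ref{prop:2pencils}. This route is delicate, however, because three geodesic members of a pencil do not automatically force the whole pencil to be geodesic (the geodesic condition on a pencil $\{\mathcal{F}_t\}$ is a cubic polynomial in $t$ rather than an identity), so additional input is needed to extract the exact two-pencil configuration required by Proposition \ref{prop:2pencils}.

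The main obstacle in the normal-form approach is to verify that, at each order, the two non-trivial triples really pin down the relevant new $b_{m,n}$ to zero rather than leaving a residual freedom compatible with a non-linear neighborhood carrying three fibrations. I expect this to come down to a careful linear/polynomial algebra argument at the first few orders, exploiting both the distinctness and the non-vanishing of the triples, followed by a formal induction in which the excess of equations over unknowns at each new order makes the propagation automatic.
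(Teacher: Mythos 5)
Your overall strategy is the one the paper follows: put the cocycle in Mishustin normal form with $a\equiv 0$ using one fibration, encode the other two fibrations as two distinct non-zero triples $(\alpha_i,\beta_i,\gamma_i)$ killing the $a$-part of $(\alpha_i,\beta_i,\gamma_i,1)\cdot\Phi$ via (\ref{eq:ActionCoeffCocycle}), and show that all $b_{m,n}$ must vanish. But the part you defer --- ``a direct case analysis \dots should show'' and ``the propagation is automatic'' --- is precisely where the entire content of the proof lies, and your heuristic for it is not sound. First, the paper does not attack the two triples in full generality: it first uses the geometry of the tangency divisors $\tang(\HH,\HH_i)$ (Proposition \ref{prop:tang+1}) to normalize the coordinate $x$ on $C_0$ and thereby force $\beta_1=\alpha_2=0$ (Case 1) or $\beta_1=\beta_2=0$ (Case 2); without this normalization the polynomial systems at orders $5$--$7$ are not tractable. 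Second, the over-determinedness count does not make the induction automatic: when $\beta_i=0$ the coefficients $b_{-2,n}$ simply do not appear in the order-$n$ equations, and in Lemma \ref{(a1,0,0,1), (a2,0,0,1)} the coefficient $b_{-2,4}$ is only killed at order $n=8$ through the relation $a^i_{-3,8}=\alpha_i(b_{-3,8}-\alpha_i b_{-2,4})=0$ for the two distinct values $\alpha_1\neq\alpha_2$; there is a genuine four-order lag in the propagation. Third, several of the subcases (e.g.\ $b_{-2,3}\neq 0$ in Case 1, where a Groebner-basis computation reduces the order-$\le 7$ relations to $\langle\gamma_2,\alpha_1\beta_2\rangle$) terminate in a \emph{contradiction} showing that the configuration cannot occur at all, rather than in the vanishing of coefficients --- so ``the only common solution is the trivial one'' is not the right statement order by order.

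A smaller point: once you know $\Phi=(1/x,y/x)$ you are already looking at the linear neighborhood of a line in $\PP$; Proposition \ref{prop:ProjectiveSurface} (about projective surfaces) is not the right reference for that last step. Your alternative dual route via Proposition \ref{prop:2pencils} is indeed blocked for the reason you give. In summary: right framework, but the computational core (the tangency-based normalization of the triples, the explicit eliminations at orders $\le 7$--$8$, and the two induction lemmas with their delayed propagation) is missing, and the claim that it follows from a counting argument would not survive an attempt to write it out.
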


In this case, recall (see example \ref{conexion-trivial}) that there is a $2$-parameter family
of transverse fibrations (each of them is a pencil of lines through a point) and any two 
have a common fiber. These results are very similar to \cite[Theorem 1]{ContattoDunajski2};
however, a foliated structure (or Riccati foliation) needs not lift as a linear first integral 
for the affine connection, and conversally we need two independent linear first integrals
to produce a Riccati foliation, i.e. a first integral for the projective structure having degree 1 in $z$.
Although the results obtained look very similar, the problems seem to be independant so far.

Before proving Theorem \ref{3-fibrations} is full details in section \ref{sec:proof3fibrat}, we need first to classify
pairs of fibrations on $(+1)$-neighborhoods.

\subsection{Tangency between two fibrations}
Given two (possibly singular) distinct foliations $\mathcal H$ and $\mathcal H'$ on a complex surface $X$, define the tangency divisor $\tang(\mathcal H,\mathcal H')$ as follows. Locally, we can define the two foliations
respectively by $\omega=0$ and $\omega'=0$ for holomorphic $1$-forms $\omega,\omega'$ without zero 
(or with isolated zero in the singular case); then the divisor  $\tang(\mathcal H,\mathcal H')$ is locally 
defined as the (zero) divisor of $\omega\wedge\omega'$.

\begin{prop}\label{prop:tang+1}
If a germ $(S,C)$ of $(+1)$-neighborhood admits $2$ distinct fibrations $\mathcal H$ and $\mathcal H'$,
then 
\begin{itemize}
\item either  $\tang(\mathcal H,\mathcal H')=[C]$ (without multiplicity),
\item or $\tang(\mathcal H,\mathcal H')\cdot [C]$ is a single point (without multiplicity).
\end{itemize}
\end{prop}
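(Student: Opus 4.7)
The approach is intersection-theoretic: realize $\tang(\HH,\HH')$ as the zero divisor of a canonical global section of an explicit line bundle on $S$, compute its intersection number with $C$, and read off the dichotomy from this numerical constraint together with effectivity. Each transverse fibration corresponds (as in \S\ref{sec:TransvFibration}) to a holomorphic retraction $H\colon S\to C$ with $H|_C=\mathrm{id}_C$, and similarly one has $H'\colon S\to C$. Their differentials are global sections
$$dH\in\Gamma\bigl(S,\Omega^1_S\otimes H^*T_C\bigr),\qquad dH'\in\Gamma\bigl(S,\Omega^1_S\otimes H'^*T_C\bigr),$$
and their wedge
$$dH\wedge dH'\;\in\;\Gamma\bigl(S,\,K_S\otimes H^*\OO_C(2)\otimes H'^*\OO_C(2)\bigr)$$
is nonzero (since $\HH\neq\HH'$), with zero divisor equal to $\tang(\HH,\HH')$: at a point where $dH$ and $dH'$ are surjective, the wedge vanishes iff the two $1$-dimensional kernels coincide, i.e.\ exactly when the two foliations are tangent.

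Next I would compute $\tang(\HH,\HH')\cdot C$. By adjunction on $(S,C)$, with $g(C)=0$ and $C\cdot C=+1$, one gets $K_S\cdot C=-3$. Since $H|_C=H'|_C=\mathrm{id}_C$, the restrictions $H^*\OO_C(1)|_C$ and $H'^*\OO_C(1)|_C$ are both isomorphic to $\OO_C(1)$, of degree $1$; altogether
$$\tang(\HH,\HH')\cdot C\;=\;-3+2+2\;=\;1.$$
Decomposing $\tang=kC+D$ with $k\in\mathbb Z_{\geq 0}$ and $D$ an effective divisor not containing $C$ as a component (so that each local multiplicity $(D\cdot C)_p\geq 0$, with $D\cdot C=0$ iff $D$ is disjoint from $C$ in a neighborhood), the equation $k+D\cdot C=1$ forces $(k,D\cdot C)\in\{(0,1),(1,0)\}$. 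In the first case $\tang$ does not contain $C$ and meets it at a single reduced point, which is the second alternative of the statement. In the second case $\tang=C+D$ with $D\cap C=\varnothing$, so upon shrinking the germ $(S,C)$ we discard $D$ and obtain $\tang=[C]$ as a reduced divisor, which is the first alternative.

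The main obstacle is conceptual rather than technical: one has to verify carefully that $dH\wedge dH'$ genuinely globalizes as a section of the announced line bundle, which amounts to tracking how each differential transforms under Moebius changes on the target $\Pp^1\simeq C$ (the twist by $H^*T_C=H^*\OO_C(2)$ absorbing the Jacobian of the transition, and analogously for $H'$). The remaining ingredients—the adjunction formula, bilinearity of intersection numbers on the germ, and the germ-shrinking step that discards components of an effective divisor disjoint from $C$—are standard.
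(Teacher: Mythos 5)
Your proof is correct, but it follows a genuinely different route from the paper's. You compute the degree $\tang(\HH,\HH')\cdot C$ once and for all by identifying $\tang(\HH,\HH')$ with the zero divisor of the global section $dH\wedge dH'$ of $K_S\otimes H^*T_C\otimes H'^*T_C$ (which is indeed intrinsic, since the differential of a holomorphic map $H:S\to C$ is tautologically a section of $\Omega^1_S\otimes H^*T_C$, and $dH$ is nowhere vanishing because $H$ is a submersion near $C$); adjunction gives $\deg(K_S|_C)=-2-C\cdot C=-3$, hence $\tang(\HH,\HH')\cdot C=-3+2+2=1$, and the dichotomy falls out of effectivity exactly as in your last step. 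The paper obtains the same key numerical fact --- that the tangency divisor restricts with degree one to a curve of the family not contained in its support --- by a local computation instead: it invokes deformation-invariance of the restricted degree to assume the tangency locus meets $C$ transversely away from a point $p_0$, blows up $p_0$ so that the strict transform of $C$ has self-intersection $0$ and its neighborhood is trivial (Fischer--Grauert, Savelev), writes the second fibration there as a Riccati equation $dx/dy=a(y)x^2+b(y)x$, and reads off the tangency divisor explicitly from the coefficients. Your argument is shorter and avoids both the blow-up and the trivialization theorem for $(0)$-neighborhoods; the paper's computation has the side benefit of producing explicit local models near the tangency point (which underlie the description, right after the statement, of the residual tangency curve as either a common fiber or a generically transverse curve) and rehearses the blow-up technique reused in the prenormal-form lemma. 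One small point worth making explicit in your write-up: $dH\wedge dH'$ is not identically zero because two foliations tangent on a nonempty open subset of the connected germ $(S,C)$ would coincide there and hence everywhere, contradicting $\HH\neq\HH'$.
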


The former case is rigid and will be described in section \ref{tancency-along-C_0}.
In the latter case, the tangency divisor is reduced and transversal to $C$ (equivalently 
the restriction $\tang(\mathcal H,\mathcal H')\vert_{C}$ has degree one);
moreover, the support $\tang(\mathcal H,\mathcal H')$ of the divisor is
\begin{itemize}
\item either a common fiber of  $\mathcal H$ and $\mathcal H'$,
\item or is generically transversal to $\mathcal H$ and $\mathcal H'$ (but might be tangent at some point).
\end{itemize}

\begin{proof}
See \cite[Section 2]{MaycolPaulo} or \cite{MaycolFrank}.

\end{proof}

\subsection{Two fibrations having a common leaf}
Let us start with the simplest case.
\begin{thm}\label{hoja-comun}
Let $(S,C)$ be a $(+1)$-neighborhood that admits two transversal fibrations $\HH$ and $\HH'$ with a common leaf $T$.
Then $(S,C)$ is linearizable. 
\end{thm}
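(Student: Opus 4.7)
The plan is to construct an explicit biholomorphism from $(S,C)$ onto a neighborhood of a line in $\PP$, using the two fibrations as the two coordinate pencils of $\PP$.

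First I would reduce to a convenient local model. Choose the coordinate $x:C\to\Pp^{1}$ so that $q:=T\cap C=\{x=0\}$, and set
$$f:=x\circ H\ \ \text{and}\ \ f':=x\circ H':S\to\Pp^{1}.$$
Both are holomorphic on $S$, both restrict to $x$ on $C$, and both vanish precisely along $T$. Working in an $\HH$-adapted chart $(x_{0},y_{0})$ around $q$ (with $C=\{y_{0}=0\}$, $T=\{x_{0}=0\}$ and $f=x_{0}$, as provided by Theorem \ref{Mishustin}/Proposition \ref{prop:NormalFormFibration}), the condition that $T$ is simultaneously a fiber of $\HH'$ forces the expansion
$$f'=x_{0}\bigl(1+y_{0}\,h(x_{0},y_{0})\bigr),\qquad h(0,0)\neq 0,$$
the non-vanishing of $h$ at $q$ encoding the transversality of $\HH$ and $\HH'$ at that point.

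The heart of the proof is to analyze the holomorphic map
$$\Phi:=(f,f'):S\longrightarrow\Pp^{1}\times\Pp^{1}.$$
By construction $\Phi$ sends $C$ isomorphically onto the diagonal $\Delta$, contracts $T$ to the point $(0,0)\in\Delta$, and is a local isomorphism along the rest of $C$ by Proposition \ref{prop:tang+1}. Let $\rho:Y\to\Pp^{1}\times\Pp^{1}$ be the blow-up of $(0,0)$. Since $\Phi^{-1}(0,0)=T$ is a Cartier divisor, $\Phi$ lifts to a holomorphic map $\tilde\Phi:S\to Y$. In the blow-up chart $(u,s)=(f,f'/f)$ the lift reads
$$\tilde\Phi(x_{0},y_{0})=\bigl(x_{0},\,1+y_{0}\,h(x_{0},y_{0})\bigr),$$
whose Jacobian at $q$ is $h(0,0)\neq 0$. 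Hence $\tilde\Phi$ is a local isomorphism at $q$ as well, and therefore at every point of $C$; combined with $\tilde\Phi|_{C}:C\stackrel{\sim}{\to}\tilde\Delta$, a standard compactness argument upgrades this to a biholomorphism from a neighborhood of $C$ in $S$ onto a neighborhood of the strict transform $\tilde\Delta$ in $Y$.

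To conclude, I would invoke the classical elementary transformation identifying $Y$ with a blow-up $\widetilde{\PP}_{p_{1},p_{2}}$ at two distinct points, which contracts the strict transforms of $\{0\}\times\Pp^{1}$ and $\Pp^{1}\times\{0\}$. Under this identification $\tilde\Delta$ corresponds to the strict transform of a line $L\subset\PP$ avoiding $p_{1}$ and $p_{2}$, so that a tubular neighborhood of $\tilde\Delta$ in $Y$ is canonically a tubular neighborhood of $L$ in $\PP$. Composing with $\tilde\Phi$ gives $(S,C)\simeq(\PP,L)$, which is linear. The delicate step is the local verification at $q$ where $\Phi$ drops rank: once the normal form $f'=f(1+y_{0}h)$ with $h(q)\neq 0$ is established, this is a one-line Jacobian computation in the blow-up chart, and the remainder of the argument is classical Cremona-type bookkeeping.
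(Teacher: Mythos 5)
Your proof is correct, but it takes a genuinely different route from the paper's. The paper normalizes $T=\{H=\infty\}=\{H'=\infty\}$ and maps $S$ directly into $\PP$ by $(H:H':1)$; with this choice the common leaf is sent into the line at infinity rather than contracted, so no blow-up is needed, and the only nontrivial point is that the map is a local diffeomorphism at $T\cap C$ --- which is exactly your Jacobian computation in disguise (in the paper's target chart $(1/u,\,u/v-1)$ the map becomes $(x_\infty,\,y_\infty f(x_\infty,y_\infty))$, the same normal form as your $(x_0,\,1+y_0h)$ up to translation). You instead contract $T$ to a point of $\Pp^1\times\Pp^1$ via $(f,f')$, resolve by blowing up, and then pass to $\PP$ by the elementary transformation contracting the strict transforms of the two rulings through $(0,0)$. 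Both arguments hinge on the same input from Proposition \ref{prop:tang+1}: since $T\neq C$ is contained in the tangency locus, the tangency divisor is exactly $[T]$, reduced, which is what forces $h(0,0)\neq 0$. Your additional steps are all sound --- the lift $\tilde\Phi$ exists because $(f,f')=(f)$ is principal along $T$, and the two contracted $(-1)$-curves are disjoint from $\tilde\Delta$ because the three curves through $(0,0)$ have pairwise distinct tangents --- they simply replace the paper's one-step coordinate trick by standard birational bookkeeping; what you gain is a picture that matches the $(h_1,h_2):S\to\Pp^1\times\Pp^1$ maps used later in Theorems \ref{thm:RigiditySpecialExample} and \ref{thm:TangGenrericModuli}. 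One small wording correction: $h(0,0)\neq0$ does not encode transversality of $\HH$ and $\HH'$ at $q$ (they are tangent there, since both contain the leaf $T$); it encodes that the contact along $T$ is simple, i.e. that $\tang(\HH,\HH')=[T]$ without multiplicity.
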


\begin{proof}
By Proposition \ref{prop:tang+1}, $\HH$ and $\HH'$ have contact of order $1$ along $T$ and 
are transversal outside. Let $H,H':S\to\mathbb P^1$ be the two submersions defining these foliations 
and coinciding with $x:C\to \Pp^1$ in restriction to $C$. For simplicity, assume $T=\{H=\infty\}=\{H'=\infty\}$.
We can use $H$ and $H'$ as a system of coordinates to embed $(S,C)$ into $\mathbb P^2$
as illustrated in Figure \ref{fig:ejemplo}. Precisely, 
consider the map given in homogeneous coordinate $(u:v:w)\in\mathbb P^2$ by
$$\Phi:=(H:H':1):S\to\mathbb P^2.$$
\begin{figure}[ht!]
  \centering
    \includegraphics[scale=0.5]{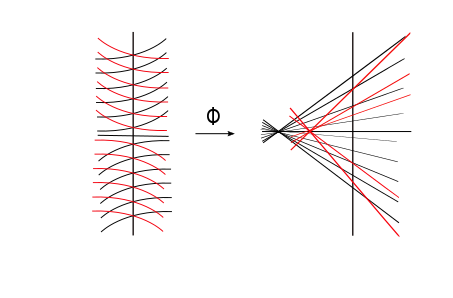}
  \caption{Linearization}
  \label{fig:ejemplo}
\end{figure}
The complement $S\setminus T$ is clearly embedded by $\Phi$ as a neighborhood of the diagonal 
$\Delta=\{u=v\}$ in the chart $w=1$. Moreover, the two fibrations are send to fibrations $du=0$ and $dv=0$.
We just have to check that this map is (well-defined and) still a local diffeomorphism at $T\cap C$.
In local convenient coordinates $(x_\infty,y_\infty)\sim(0,0)$ at the source, we have 
$$\frac{1}{H}=x_\infty\ \ \ \text{and}\ \ \ \frac{1}{H'}=x_\infty(1+y_\infty\cdot f(x_\infty,y_\infty)),\ \ \ f(0,0)\not=0$$
(we used that $1/H$ and $1/H'$ coincide along $y_\infty=0$, vanish at $x_\infty=0$ and have simple tangency there).
Coordinates at the target are given by 
$$(X,Y)=\left(\frac{1}{u},\frac{u}{v}-1\right).$$
Therefore, our map is given by
$$\Phi\ :\ (X,Y)=(x_\infty,y_\infty\cdot f(x_\infty,y_\infty))$$
which is clearly a local diffeomorphism.
\end{proof}

\subsection{Two fibrations that are tangent along a rational curve}\label{tancency-along-C_0}
The goal of this section is to describe a very special neighborhood. As we shall see later,
it is the only one having a large group of symmetries (i.e. of dimension $>2$) but not linearizable. 

Let us consider the diagonal $\Delta\subset\mathbb P^1\times\mathbb P^1$. The self-intersection 
is $\Delta\cdot\Delta=2$ and its neighborhood is a $(+2)$-neighborhood. Therefore, we can take 
(see section \ref{sec:GeralSelfInt}) the $2$-fold ramified cover, ramifying over $\Delta$:
$$\pi:(S,C)\stackrel{2:1}{\rightarrow} (\mathbb P^1\times\mathbb P^1,\Delta)$$
and we get a $(+1)$-neighborhood. Moreover, the two fibrations 
on $\mathbb P^1\times\mathbb P^1$ defined by projections on the two factors lift 
as fibrations $\HH_1$ and $\HH_2$ on $S$ transversal to $C$ whose tangent locus is
$\tang(\HH_1, \HH_2)=C$.  

\begin{remark}The two fibers passing through a given point $p\in S$ close enough to $C$
intersect twice: the Galois involution $\iota:(S,C)\to (S,C)$ permutes these two points.
\end{remark}

\begin{prop} The germ $(S,C)$ is not linearizable.
\end{prop}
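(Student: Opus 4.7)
My approach is to derive a contradiction from Proposition \ref{prop:tang+1} together with the description of transverse fibrations on the linear neighborhood given in Example \ref{conexion-trivial}.

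First, I would verify that the two fibrations $\HH_1,\HH_2$ put $(S,C)$ in the first case of Proposition \ref{prop:tang+1}, namely $\tang(\HH_1,\HH_2)=[C]$. For this, pick local coordinates $(u,v)$ on $\Pp^1\times\Pp^1$ near a point of $\Delta$ with $\Delta=\{u=0\}$ and in which the two factor projections are defined respectively by $du+dv=0$ and $du-dv=0$. The double cover ramifying over $\Delta$ is locally given by $u=w^2$, so that $du=2w\,dw$, and the lifted fibrations are defined by $\omega_1=2w\,dw+dv$ and $\omega_2=2w\,dw-dv$. A direct computation yields $\omega_1\wedge\omega_2=-4w\,dw\wedge dv$, so the tangency divisor is the reduced curve $\{w=0\}=C$, as expected.

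Next, suppose for contradiction that $(S,C)$ is linearizable, via an isomorphism $\varphi\colon(S,C)\stackrel{\sim}{\to}(\Pd,L_0)$. Then the pushforwards $\tilde\HH_i:=\varphi_*\HH_i$ are two distinct transverse fibrations on the linear neighborhood. By Example \ref{conexion-trivial}, each $\tilde\HH_i$ extends globally to a pencil of lines in $\PP$ through some point $p_i$, and any two such pencils share the line $\overline{p_1p_2}$ as a common leaf. Being a common leaf, this line lies in the support of $\tang(\tilde\HH_1,\tilde\HH_2)$; moreover it is transversal to $L_0$, meeting it at a single point. In particular $\tang(\tilde\HH_1,\tilde\HH_2)$ is not of the form $[L_0]$, so $(\tilde\HH_1,\tilde\HH_2)$ falls in the second case of Proposition \ref{prop:tang+1}.

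Finally, the tangency divisor is natural under isomorphisms, so one has
$$\tang(\tilde\HH_1,\tilde\HH_2)=\varphi_*\tang(\HH_1,\HH_2)=\varphi_*[C]=[L_0],$$
contradicting the previous paragraph. The main step to get right is the explicit local computation of $\omega_1\wedge\omega_2$ on the double cover, which is what locks $(\HH_1,\HH_2)$ into the first case of Proposition \ref{prop:tang+1}; the remainder of the argument is a formal consequence of that dichotomy together with the global description of transverse fibrations in the linear case.
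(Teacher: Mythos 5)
Your proof is correct and follows essentially the same route as the paper's: assume linearizability, observe that the two fibrations become pencils of lines through two distinct base points off the curve (the paper gets this by a degree argument for the globally extended foliations, you by citing Example \ref{conexion-trivial}), and derive the contradiction from the fact that two such pencils are tangent exactly along the line joining the base points, which is transversal to $L_0$, whereas the tangency locus must be $L_0$ itself. Your explicit local computation confirming $\tang(\HH_1,\HH_2)=[C]$ is a welcome verification, though the paper already records this fact in the setup preceding the proposition.
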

\begin{proof}Assume by contradiction that $(S,C)$ is equivalent to the neighborhood of a line $L\subset\PP$.
Then each foliation $\HH_i$ extends as a global singular foliation on $\PP$. 
Since $\HH_i$ is totally transversal to $L$, it must be a foliation of degree $0$, i.e. a pencil of lines. But if $\HH_1$
and $\HH_2$ are pencil of lines, their tangency must be invariant (the line through the to base points), contradiction.
\end{proof}

\begin{cor}
The germ $(S,C)$ is not algebrizable, but the field $\mathcal M(S,C)$ of meromorphic functions
has transcendance degree $2$ over $\CC$.
\end{cor}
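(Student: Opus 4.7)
The plan is to derive both assertions directly from tools already developed above. For non-algebraicity, I argue by contradiction: if $(S,C)$ were algebrizable, the germ would extend to a neighborhood of $C$ inside some smooth projective surface $\tilde S$, with $C\simeq\Pp^{1}$ of self-intersection $+1$. Proposition~\ref{prop:ProjectiveSurface} then forces $(\tilde S,C)\simeq(\PP,L_{0})$, so the germ $(S,C)$ is the neighborhood of a line in $\PP$, i.e.\ linearizable. This directly contradicts the previous proposition, so no computation is needed here.

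For the transcendence degree, the upper bound is automatic since $\dim_{\CC}S=2$ and a complex surface cannot carry three algebraically independent meromorphic functions. For the lower bound, I exhibit two such functions using the covering $\pi$. Let $p_{1},p_{2}\colon\Pp^{1}\times\Pp^{1}\to\Pp^{1}$ be the two projections and set $f_{i}:=p_{i}\circ\pi\in\mathcal M(S,C)$ for $i=1,2$. Their level curves are, by construction, the leaves of $\HH_{1}$ and $\HH_{2}$. Since $\tang(\HH_{1},\HH_{2})=C$ is a proper subvariety of $S$, the two foliations are transversal on the dense open set $S\setminus C$, so $df_{1}\wedge df_{2}\not\equiv 0$. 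A non-trivial polynomial relation $P(f_{1},f_{2})\equiv 0$ with $P\in\CC[u,v]$ would yield, by differentiation, $P_{u}(f_{1},f_{2})\,df_{1}+P_{v}(f_{1},f_{2})\,df_{2}\equiv 0$; combined with $df_{1}\wedge df_{2}\not\equiv 0$ on a dense open set, this forces $P_{u}(f_{1},f_{2})\equiv P_{v}(f_{1},f_{2})\equiv 0$ there, and induction on $\deg P$ produces $P\equiv 0$. Hence $\mathcal M(S,C)$ has transcendence degree exactly $2$ over $\CC$.

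I do not anticipate any real obstacle: both parts are immediate consequences of results proved earlier in the paper. The only point to keep track of in the first part is the meaning of ``algebrizable'', which I interpret as the germ arising from a smooth projective surface containing $C$ with the prescribed self-intersection, so that the hypotheses of Proposition~\ref{prop:ProjectiveSurface} are met.
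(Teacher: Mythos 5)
Your argument follows the paper's own proof on the two substantive points: non-algebrizability is deduced from Proposition~\ref{prop:ProjectiveSurface} together with the preceding proposition that $(S,C)$ is not linearizable, and the lower bound on the transcendence degree comes from pulling back the rational function field of $\Pp^1\times\Pp^1$ through the covering $\pi$. Your $f_1,f_2$ generate exactly that subfield, and your $df_1\wedge df_2$ computation is a correct, slightly more explicit, verification of their algebraic independence (the paper simply observes that $\pi^*$ embeds the rational function field of $\Pp^1\times\Pp^1$ into $\mathcal M(S,C)$, which already gives transcendence degree $\ge 2$).

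The one place where you go beyond the paper is the upper bound, and there your justification is incorrect as stated: it is \emph{not} true that a two-dimensional complex manifold cannot carry three algebraically independent meromorphic functions. Siegel's bound $\mathrm{tr.deg}_{\CC}\,\mathcal M(X)\le\dim X$ is a theorem about \emph{compact} manifolds, and it fails badly for open ones (already on $\CC$ the field of meromorphic functions has infinite transcendence degree over $\CC$). Since $(S,C)$ is only a germ of neighborhood, the correct tool is Andreotti's theorem on pseudoconcave manifolds: a neighborhood of a compact curve with positive normal bundle is pseudoconcave, and for such manifolds the transcendence degree of the meromorphic function field is again bounded by the dimension. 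The paper's own proof in fact only establishes the lower bound, so either invoke Andreotti's theorem to get equality, or restrict the claim to $\ge 2$, which is the interesting content here: a non-algebrizable germ carrying as many independent meromorphic functions as an algebraic surface.
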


\begin{proof}
It immediately follows from Proposition \ref{prop:ProjectiveSurface} that $(S,C)$ is not algebrizable;
the field $\mathcal M(S,C)$ contains the field or rational functions on $\Pp^1\times\Pp^1$
which has indeed, as an algebraic surface, transcendance degree $2$ over $\CC$.
\end{proof}
\begin{remark}The fundamental group $\pi_1(\Pp^1\times\Pp^1\setminus\Delta)$ is trivial, 
and this is a reason why we cannot extend the ramified cover to the whole of the algebraic surface.
It is proved in \cite{MaycolFrank2} that there exist neighborhoods without non constant meromorphic functions.
\end{remark}

The cocycle $\Phi$ defining the germ $(S,C)$ can be constructed as follows. 
We can give local coordinates $(x_0, y_0)$, $(x_{\infty}, y_{\infty})$ on $S$ and $(u_0,v_0)$, $(u_\infty,v_\infty)$ on $V\supset\Delta$ such that
\begin{equation}\label{eq:CoordSpecialCovering}
\left\{\begin{matrix}
u_0&=&x_0\\
v_0&=&x_0 - y_0^2
\end{matrix} \right. ,
\hspace{0.5cm} \left\{\begin{matrix}
u_\infty&=&x_{\infty}\\
v_\infty&=&x_{\infty} + y_{\infty}^2
\end{matrix} \right. ,
\hspace{0.5cm} (u_\infty,v_\infty) = \left( \frac{1}{u_0}, \frac{1}{v_0}  \right)
\end{equation}
where $\Delta=\{v_0=u_0\}=\{v_\infty=u_\infty\}$. So the cocycle is explicitely given by 
\begin{equation}\label{eq:SpecialCocycle}
\Phi(x_0, y_0)=\left( \frac{1}{x_0}, \frac{y_0}{x_0} \left(  1- \frac{y_0^2}{x_0} \right)^{-1/2}   \right)= \left( \frac{1}{x_0}, \frac{y_0}{x_0} + \frac{y^3}{2x^2} + \frac{3y^5}{8x^3}+\ldots   \right),
\end{equation}
which is already in normal form. The fibrations are given by 
$$h_1=x_0=\frac{1}{x_\infty}\ \ \ \text{and}\ \ \ h_2=x_0 - y_0^2=\frac{1}{x_{\infty}+ y_{\infty}^2}.$$

\begin{prop}\label{prop:only2specialSL2}
The fibrations $\HH_1$ and $\HH_2$ are the only one fibrations on $S$ that are transverse to $C$.
\end{prop}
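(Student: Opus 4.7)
The plan is to exploit Proposition \ref{prop:NormalFormFibration} together with the explicit normal form of the cocycle given in (\ref{eq:SpecialCocycle}) and the action formulas (\ref{eq:ActionCoeffCocycle}). Since (\ref{eq:SpecialCocycle}) is already in Mishustin normal form with $a$-part identically zero, the fibration $\HH_1$ given by $x_0 = 1/x_\infty$ corresponds to the identity element $(\alpha,\beta,\gamma) = (0,0,0) \in \mathbb C^3$. Any other transverse fibration corresponds, by Proposition \ref{prop:NormalFormFibration}, to a triple $(\alpha,\beta,\gamma) \neq (0,0,0)$ for which the $a$-part of $(\alpha,\beta,\gamma,1)\cdot\Phi$ vanishes identically.

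First, I would read off the relevant coefficients of (\ref{eq:SpecialCocycle}) by expanding the binomial series $(1-y_0^2/x_0)^{-1/2}$. All $a_{m,n}$ vanish, and for the $b$-part only those $b_{m,n}$ with $n = 2k+1$ odd and $m = -(k+1)$ are nonzero, namely $b_{-2,3} = 1/2$, $b_{-3,5} = 3/8$, $b_{-4,7} = 5/16$, and so on. In particular $b_{-2,4} = b_{-3,4} = b_{-2,5} = b_{-4,5} = 0$.

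Next, I would use the first three lines of (\ref{eq:ActionCoeffCocycle}) with $\theta = 1$. Substituting the vanishing and nonvanishing coefficients above yields
\begin{align*}
a'_{-3,4} &= -\gamma^2 + 2\gamma \cdot \tfrac{1}{2} = \gamma(1-\gamma), \\
a'_{-3,5} &= \alpha \cdot \tfrac{3}{8} = \tfrac{3\alpha}{8}, \\
a'_{-4,5} &= 3\beta\, b_{-2,3}^2 - \beta\, b_{-3,5} = \tfrac{3\beta}{4} - \tfrac{3\beta}{8} = \tfrac{3\beta}{8}.
\end{align*}
Simultaneous vanishing of these three expressions forces $\alpha = \beta = 0$ and $\gamma \in \{0,1\}$. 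Hence there are at most two transverse fibrations.

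Since $\HH_1$ and $\HH_2$ already provide two distinct transverse fibrations, they must correspond to the two remaining triples $(0,0,0)$ and $(0,0,1)$, and these are the only transverse fibrations on $(S,C)$. The only minor verification step is the binomial expansion; the main conceptual point (and the only step that required some insight) is that already the coefficients $a'_{-3,4}$, $a'_{-3,5}$, $a'_{-4,5}$ suffice to cut the $3$-parameter group $\{(\alpha,\beta,\gamma,1)\}$ down to two points, so no higher-order obstructions need to be invoked. There is no genuine obstacle beyond this computation, because the existence of $\HH_2$ a priori guarantees that the triple $(0,0,1)$ automatically kills \emph{all} higher-order $a'_{m,n}$ as well.
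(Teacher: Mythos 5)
Your proposal is correct and follows essentially the same route as the paper: both invoke Proposition \ref{prop:NormalFormFibration}, substitute the explicit coefficients of the cocycle (\ref{eq:SpecialCocycle}) into formula (\ref{eq:ActionCoeffCocycle}), and obtain $a'_{-3,4}=\gamma(1-\gamma)$, $a'_{-3,5}=\tfrac{3}{8}\alpha$, $a'_{-4,5}=\tfrac{3}{8}\beta$, forcing $(\alpha,\beta,\gamma)\in\{(0,0,0),(0,0,1)\}$. Your closing remark that the known existence of $\HH_2$ spares you from checking the vanishing of the higher-order $a'_{m,n}$ for $(0,0,1)$ is exactly the (implicit) logic of the paper's proof as well.
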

\begin{proof}
Recall (see Proposition \ref{prop:NormalFormFibration}) that transverse fibrations are in one-to-one correspondance with $\vartheta=(\alpha, \beta, \gamma, \theta) \in \mathbb{C}^3 \times \{1\}$ such that the $a$-part of the equivalent cocycle $\vartheta\cdot\Phi$ is zero. Substituting the explicit cocycle above in formula (\ref{eq:ActionCoeffCocycle}), we get
$$a_{-3,4}'=\gamma(1-\gamma),\ \ \ a_{-3,5}'=\frac{3}{8}\alpha\ \ \ \text{and}\ \ \ a_{-4,5}'=\frac{3}{8}\beta$$
so that the only two possibilities are $(\alpha, \beta, \gamma, \theta)=(0,0,0,1)$ or $(0,0,1,1)$ which respectively correspond
to the two fibrations $\HH_1$ and $\HH_2$.
\end{proof}

\begin{thm}\label{thm:RigiditySpecialExample}
Let $(S,C_0)$ be a $(+1)$-neighborhood and suppose that there are two fibrations $\HH_1$ and $\HH_2$ transverse to $C$ such that $\tang(\HH_1, \HH_2)=C_0$. Then $(S,C_0)$ is the previous example. 
\end{thm}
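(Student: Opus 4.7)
The plan is to build a $2:1$ ramified cover from $(S,C_0)$ onto a neighborhood of $\Delta\subset\Pp^1\times\Pp^1$ using the two fibrations themselves, and then conclude by uniqueness of such a cover.

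First, I would analyze the local form of $\Phi:=(H_1,H_2):S\to\Pp^1\times\Pp^1$ near $C_0$, where $H_1,H_2$ are the submersions defining $\HH_1,\HH_2$. Near a point of $C_0$, pick local coordinates $(x,y)$ with $C_0=\{y=0\}$ and $\HH_1=\{dx=0\}$; since $H_1\vert_{C_0}=\mathrm{id}$, one may take $H_1=x$. Writing $H_2=x+y\,h(x,y)$, the hypothesis that $\HH_1$ and $\HH_2$ are tangent along $C_0$ forces $h(x,0)\equiv 0$, so $h=y\,k(x,y)$ and $H_2=x+y^2 k(x,y)$. A direct computation yields
$$dH_1\wedge dH_2 = y(2k + y k_y)\,dx\wedge dy,$$
so the assumption $\tang(\HH_1,\HH_2)=C_0$ with multiplicity one is equivalent to $k(x,0)\not=0$. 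After replacing $y$ by $\tilde y = y\sqrt{-k(x,y)}$ (a valid local holomorphic change of coordinates along $C_0$, since $-k$ is non-vanishing there), $\Phi$ takes the model form $(x,\tilde y)\mapsto (x,\,x-\tilde y^2)$, matching (\ref{eq:CoordSpecialCovering}).

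Next I would globalize. By construction $\Phi$ is defined on the entire (shrunk) neighborhood $S$, $\Phi(C_0)=\Delta$, and the local analysis shows that $\Phi$ is \'etale outside $C_0$ and has the quadratic ramified model along $C_0$. By compactness of $C_0$, shrinking $S$ yields a $2:1$ ramified covering
$$\Phi:(S,C_0)\longrightarrow (V,\Delta)$$
onto a tubular neighborhood $V$ of $\Delta$ in $\Pp^1\times\Pp^1$, totally ramified along $C_0\mapsto\Delta$. Equivalently, $(S,C_0)$ is equipped with an involution $\iota$ fixing $C_0$ pointwise whose quotient is $(V,\Delta)$.

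Finally I invoke uniqueness. The germ $(V,\Delta)$ is a $(+2)$-neighborhood, so by Proposition \ref{prop:PositiveSelfIntersection} applied to $k=2$, the $(+1)$-neighborhood equipped with an order $2$ cyclic automorphism fixing $C$ pointwise and covering $(V,\Delta)$ is unique up to isomorphism. The example of section \ref{tancency-along-C_0} provides one such cover, hence $(S,C_0)$ is equivalent to it. The main obstacle is the local normal-form step: once one extracts from the assumption $\tang(\HH_1,\HH_2)=C_0$ the clean quadratic expansion $H_2-H_1=y^2k$ with $k(\cdot,0)\not\equiv 0$, the rest of the proof follows formally from compactness of $C_0$ and the uniqueness of the $2$-fold cyclic ramified cover, which is already established in the paper.
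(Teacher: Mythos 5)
Your proposal is correct and follows essentially the same route as the paper's proof: both construct the map $\Phi=(H_1,H_2)$ to $\Pp^1\times\Pp^1$, extract from the tangency hypothesis the local quadratic model $H_2-H_1=y^2k$ with $k$ non-vanishing along $C_0$, rescale $y$ by a square root to reach the normal form $(x,\tilde y)\mapsto(x,x-\tilde y^2)$, and conclude that $\Phi$ is the $2$-fold cover ramified over $\Delta$. The only (harmless) difference is that you make the final uniqueness step explicit by invoking Proposition \ref{prop:PositiveSelfIntersection}, whereas the paper leaves the uniqueness of the cyclic double cover implicit.
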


\begin{proof}
Let $x: C_0 \rightarrow \Pp^1$ be a global coordinate on $C_0$ and consider $h_1,h_2 : S \rightarrow \Pp^1$ 
first integrals of $\HH_1$ and $\HH_2$ such that $h_1|_{C_0} = h_2|_{C_0} =x$. Consider the map
$$
\pi :S \rightarrow  \Pp^1 \times \Pp^1\ ;\  \Phi(p) = (h_1(p), h_2(p)).
$$
Clearly, $\pi\vert_{C_0}: C_0  \stackrel{\simeq}{\rightarrow} \Delta$ is an isomorphism, and we claim that
$\pi$ is a $2$-fold covering of a neighborhood $V$ of $\Delta$, ramifying over $\Delta$.
In order to prove this claim, it suffices to check it near $p_0:\{x=0\}$ since $x$ is well-defined up to 
a Moebius transform. Fix local coordinates $(x,y)$ on $(S,p_0)$ such that $C_0=\{y=0\}$ 
and $h_1(x,y)=x$. Therefore, $h_2(x,y)=x-y^2f(x,y)$ with $f(0,0)\not=0$; here we have used 
that $h_1$ and $h_2$ coincide on $C_0$ and are tangent there, without multiplicity.
We can change coordinate $(X,Y)=(x,y\sqrt{f(x,y)})$ so that 
$$h_1(X,Y)=X\ \ \ \text{and}\ \ \ h_2(X,Y)=X-Y^2.$$
From this, it is clear that $\pi$ is a $2$-fold cover ramifying over $\Delta\subset\Pp^1_u\times\Pp^1_v$
since in coordinates $(U,V)=(u,u-v)$ we have $\Delta=\{V=0\}$ and $\pi:(X,Y)\mapsto(U,V)=(X,Y^2)$.
By construction, $\HH_1$ and $\HH_2$ are sent to $\ker(dX)$ and $\ker(dY)$.
\end{proof}

Now we want to write explicitly the differential equation associated to this example. 
In order to do that, we consider the automorphism group 
$$\mathrm{Aut}(\Pp^1_u\times\Pp^1_v,\Delta)=\mathrm{PSL}_2(\CC)\times \mathbb Z/2\mathbb Z$$
where the $\mathrm{PSL}_2(\CC)$-action is diagonal
$$\begin{pmatrix}a&b\\ c& d\end{pmatrix} \ :\ (u,v)\mapsto \left(\frac{au+b}{cu+d},\frac{av+b}{cv+d}\right)$$
and the $\mathbb Z/2\mathbb Z$-action is generated by the involution $(u,v)\mapsto(v,u)$. 
After $2$-fold cover $(S,C)\to(\Pp^1_u\times\Pp^1_v,\Delta)$, we get an action 
of 
$$\Gamma\simeq\{M\in\mathrm{GL}_2(\mathbb C)\ ;\ \det(M)=\pm1\}$$
where $-I$ is the Galois involution of the covering, and we have 
$$\mathrm{Aut}(\Pp^1_u\times\Pp^1_v,\Delta)=\Gamma/\{\pm I\}.$$
Indeed, the $\mathrm{PGL}_2(\mathbb C)$-action is given by $\mathrm{SL}_2(\mathbb C)/\{\pm I\}$ 
and $\langle\sqrt{-1}I\rangle/\{\pm I\}\simeq\mathbb Z/2\mathbb Z$ is the permutation of coordinates $u\leftrightarrow v$.
In coordinates $(u,v)=(x,x-y^2)$ (see (\ref{eq:CoordSpecialCovering})), the $\mathrm{SL}_2(\CC)$-action on $(S,C)$ writes
$$\begin{pmatrix}a&b\\ c& d\end{pmatrix} \ :\ 
(x,y)\mapsto\left(\frac{ax+b}{cx+d},\frac{y}{cx+d}\sqrt{\frac{cx+d}{cx+d-y^2}}\right)$$
where $ad-bc=1$ and the square-root choosen so as $\sqrt{1}=1$ (note that its argument is $1$ along $y=0$).
The involution writes
$$\begin{pmatrix}\sqrt{-1}&0\\ 0& \sqrt{-1}\end{pmatrix} \ :\   (x,y)\mapsto (x-y^2,\sqrt{-1}y).$$
Going to the dual picture, we get a projective structure $\Pi$ on $(\mathbb C^2,0)$ invariant by an action 
of the same group $\Gamma$, and fixing the origin $0$. By Bochner Linearization Theorem, 
the action of the maximal compact subgroup $\Gamma_0\subset\Gamma$ is holomorphically linearizable, 
and since $\Gamma$ is just the complexification of $\Gamma_0$ (and therefore Zariski dense in $\Gamma$)
the action of $\Gamma$ itself is linearizable.

\begin{prop}The unique non-trivial projective structure $\Pi$ on $(\mathbb C^2,0)$ which is invariant
by the linear action of $\mathrm{SL}_2(\CC)$ is given (up to homothety) by
\begin{equation}\label{eq:VerySpecialProjStr}
y'' = (x y' - y)^3.
\end{equation}
\end{prop}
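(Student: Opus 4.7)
The plan is to produce a single absolute $\mathrm{SL}_2(\CC)$-invariant function on the first jet space and then deduce uniqueness by a transitivity argument, rather than grinding through the cascade of ODEs that a direct infinitesimal approach would require. First I would write down the transformation laws under the linear change $(X,Y)=(ax+by,cx+dy)$ with $ad-bc=1$. A short chain-rule computation gives
\[
Y' = \frac{c+dy'}{a+by'}, \qquad Y'' = \frac{y''}{(a+by')^3}, \qquad XY'-Y = \frac{xy'-y}{a+by'},
\]
each identity exploiting $ad-bc=1$. Dividing shows that $y''/(xy'-y)^3$ is an absolute $\mathrm{SL}_2(\CC)$-invariant on the complement of the radial locus $\{xy'-y=0\}$ inside the first jet space $\mathbb{P}(T\CC^2)$.

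Next I would check that $\mathrm{SL}_2(\CC)$ acts transitively on the open set $\Omega = \{(x,y,z)\in\mathbb{P}(T\CC^2)\,:\,(x,y)\neq 0,\ xz-y\neq 0\}$. The action on $\CC^2\setminus\{0\}$ is already transitive, and the isotropy at $(1,0)$ is the unipotent $1$-parameter subgroup $(x,y)\mapsto(x+by,y)$, whose induced action on the projective tangent direction $z$ is the M\"obius transformation $z\mapsto z/(1+bz)$. This fixes $z=0$ (the radial direction, where $xz-y=0$) and acts transitively on the remaining values. Combining these two facts, $\mathrm{SL}_2(\CC)$ acts with a single open orbit on $\Omega$, so every $\mathrm{SL}_2$-invariant meromorphic function on $\Omega$ must be constant.

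Suppose now that $y''=P(x,y,y')$, with $P=A+By'+C(y')^2+D(y')^3$, defines an $\mathrm{SL}_2(\CC)$-invariant projective structure. Invariance of the equation is equivalent to $P(X,Y,Y')=P(x,y,y')/(a+by')^3$, and so $P/(xy'-y)^3$ is an $\mathrm{SL}_2$-invariant meromorphic function on $\Omega$. By the previous paragraph it equals a constant $c\in\CC$, whence $P=c(xy'-y)^3$ and $y''=c(xy'-y)^3$. Because the projective structure dual to $(S,C_0)$ is non-linear by the preceding proposition, $c\neq 0$; the homothety $y\mapsto\lambda y$ rescales $c$ to $c\lambda^2$, so we may normalize $c=1$ and recover $y''=(xy'-y)^3$.

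The main obstacle is the clean cancellation in the chain-rule step, which relies on $ad-bc=1$; once those transformation laws are in hand, the transitivity argument and the reduction to a single constant are essentially immediate. An alternative infinitesimal approach would write invariance as $[H^\sim,v]\in\langle v\rangle$ and $[E^\sim,v]\in\langle v\rangle$ for the lifts $H^\sim=x\partial_x-y\partial_y-2z\partial_z$ and $E^\sim=y\partial_x-z^2\partial_z$ of the $\mathfrak{sl}_2$-generators to $\mathbb{P}(TU)$, derive weighted homogeneity of the form $A=y^3 a(xy)$, $B=y\,b(xy)$, $C=x\,c(xy)$, $D=x^3 d(xy)$, and then solve the resulting cascade of ODEs in $t=xy$ under holomorphy at $0$; the answer is the same but the calculation is substantially longer.
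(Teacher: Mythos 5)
Your proof is correct and takes essentially the same route as the paper's: your functional equation $P(X,Y,Y')=P(x,y,y')/(a+by')^3$ is the paper's identity (\ref{eq:SL2eq1}), and your observation that the parabolic isotropy at a point acts on directions by $z\mapsto z/(1+bz)$, fixing only the radial one and acting transitively on the rest, is precisely the paper's argument that the invariant cubic (``inflection web'') must be $\lambda\,(x\,dy-y\,dx)^3$ with $\lambda$ invariant, hence constant. The only differences are cosmetic (you phrase the conclusion as constancy of the explicit jet-space invariant $y''/(xy'-y)^3$ on a single open orbit, and the rescaling under $y\mapsto\lambda y$ sends $c$ to $c/\lambda^{2}$ rather than $c\lambda^{2}$, which is immaterial for normalizing $c=1$).
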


\begin{proof} 
See \cite[Theorem 3]{Rom}.
\end{proof}

\begin{thm}
The two pencils of geodesic foliations for the $\mathrm{SL}_2(\CC)$-invariant projective structure $y'' = (x y' - y)^3$
are given by 
$$\omega_t^\pm=\left(y^2dx-(xy+\sqrt{-1})dy\right)+t\left((xy+\sqrt{-1})dx-x^2dy\right)=0,\ \ \ t\in\mathbb P^1$$
where $\pm$ stand for the two determinations of $\sqrt{-1}$. 
For each $t$, we see that the line $y=tx$ is a common leaf for the two foliations
$\mathcal F_{\omega_t^+}$ and $\mathcal F_{\omega_t^-}$.
\end{thm}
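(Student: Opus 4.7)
The strategy is to exploit the $\mathrm{SL}_2(\mathbb{C})$-symmetry systematically: since Proposition~\ref{prop:only2specialSL2} gives exactly two transverse fibrations $\mathcal{H}_1,\mathcal{H}_2$ on $(S,C_0)$, Proposition~\ref{pencil-vs-flat-vs-fibration} produces exactly two pencils of geodesic foliations on $U$. Each fibration $\mathcal{H}_i$ is preserved (as a fibration) by $\mathrm{SL}_2(\mathbb{C})$ acting on $(S,C_0)$ via the diagonal lift of the action on $\mathbb{P}^1\times\mathbb{P}^1$, so each pencil is $\mathrm{SL}_2$-equivariant: $g_*\mathcal{F}_t=\mathcal{F}_{g\cdot t}$, where $g$ acts on the parameter $t\in\mathbb{P}^1$ by the same Möbius transformation as on $C_0$. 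Via the duality identification of $C_0$ with the set of directions at $0\in U$, this action on $t$ is precisely the action on slopes of lines through the origin, namely $t\mapsto(r+st)/(p+qt)$ for $g=\bigl(\begin{smallmatrix}p&q\\r&s\end{smallmatrix}\bigr)$. In particular, the stabilizer of $t=0$ is the upper-triangular Borel $B_0=\{q\text{ arbitrary},\ r=0\}$, and because $y=0$ is the geodesic through the origin of slope $0$, the distinguished foliation $\mathcal{F}_0^\pm$ (one from each pencil) must contain $y=0$ as a leaf and be $B_0$-invariant.

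The crucial step is the explicit classification of such foliations. Writing the slope as $\phi(x,y)=dy/dx$, invariance under the unipotent $(x,y)\mapsto(x+qy,y)$ forces $\phi(x+qy,y)=\phi(x,y)/(1+q\phi(x,y))$; differentiating at $q=0$ gives $y\phi_x=-\phi^2$, whose general solution is $\phi=y/(x+y\,g(y))$. Imposing additionally invariance under the diagonal $(x,y)\mapsto(px,y/p)$ pins down $g(y)=c/y^2$ for some constant $c\in\mathbb{C}$, so $\phi=y^2/(xy+c)$. One then substitutes into the geodesic ODE $\phi_x+\phi\phi_y=(x\phi-y)^3$; both sides collapse to a single rational function with a common denominator $(xy+c)^3$, and the numerators match iff $c(c^2+1)=0$. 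The root $c=0$ yields the radial foliation $\phi=y/x$, which is fully $\mathrm{SL}_2(\mathbb{C})$-invariant and therefore cannot belong to any pencil carrying a non-trivial Möbius action (the Möbius action on $\mathbb{P}^1$ has no global fixed point). The remaining two roots $c=\pm\sqrt{-1}$ give
$$\mathcal{F}_0^\pm\ :\ y^2\,dx-(xy\pm\sqrt{-1})\,dy=0,$$
which are the $t=0$ members of the two pencils.

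To obtain the general pencil element, transport $\mathcal{F}_0^\pm$ by a lower-unipotent element $g_r=\bigl(\begin{smallmatrix}1&0\\r&1\end{smallmatrix}\bigr)\in\mathrm{SL}_2(\mathbb{C})$, which acts on $U$ by $(x,y)\mapsto(x,rx+y)$ and sends $t=0$ to $t=r$. The slope rule $dy'/dx'=r+dy/dx$ combined with the substitution $(x,y)=(x',y'-rx')$ yields, after one line of algebra, the explicit slope of $g_{r*}\mathcal{F}_0^\pm$; rewriting this as a $1$-form and collecting the coefficient of $r$ gives the stated pencil
$$\omega_t^\pm=\bigl(y^2\,dx-(xy\pm\sqrt{-1})\,dy\bigr)+t\bigl((xy\pm\sqrt{-1})\,dx-x^2\,dy\bigr),$$
with the parametrization chosen so that $\mathcal{F}_{\omega_t^\pm}=g_{t*}\mathcal{F}_0^\pm$. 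Finally, the common leaf statement is checked by direct substitution: on $y=tx$, one has $dy=t\,dx$, and plugging into $\omega_t^\pm$ produces an expression in which the terms quadratic in $x$ cancel and the remaining $\pm ti-\pm ti$ vanishes, so $\omega_t^\pm$ restricts to zero along $\{y=tx\}$ for both choices of sign.

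The main obstacle is the classification of $B_0$-invariant foliations in the second paragraph---in particular, checking that the ansatz $\phi=y^2/(xy+c)$ exhausts all possibilities (not just a generic orbit type) and correctly extracting the three-root constraint $c(c^2+1)=0$ from the geodesic equation while keeping the three sign conventions aligned (Möbius action on $t$, identification of $C_0$ with directions at $0$, and the branch of $\sqrt{-1}$ distinguishing $\mathcal{F}_0^+$ from $\mathcal{F}_0^-$). Once this step is settled, the $\mathrm{SL}_2(\mathbb{C})$-extension and the leaf verification are mechanical.
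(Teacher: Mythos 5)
Your strategy coincides with the paper's: invoke Proposition~\ref{prop:only2specialSL2} to know there are exactly two pencils, characterize their members through the geodesic $y=0$ as the Borel-invariant geodesic foliations, and then propagate by the unipotent subgroup $(x,y)\mapsto(x,tx+y)$. Your middle step is correct and is literally the paper's computation: unipotent invariance gives $y\phi_x=-\phi^2$, the diagonal gives the homogeneity constraint, the ansatz $\phi=y^2/(xy+c)$ is forced, and the geodesic equation $\phi_x+\phi\phi_y=(x\phi-y)^3$ collapses to $c=-c^3$, yielding the radial foliation and $\omega_0^{\pm}$.

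The gap is in the last step, which you assert (``after one line of algebra'') rather than compute, and the asserted output is not what the computation gives. Pushing $\omega_0^{+}=y^2dx-(xy+\sqrt{-1})dy$ forward by $(x,y)\mapsto(x,rx+y)$ produces
$$\omega_0^{+}-r\left((xy-\sqrt{-1})dx-x^2dy\right),$$
so the $t$-part carries the \emph{opposite} determination of $\sqrt{-1}$ from the $t=0$ member (and the parameter is $-r$). This is not a cosmetic point: for the displayed $\omega_t^{+}$, with the \emph{same} determination in both brackets, the slope $\phi=(y^2+t(xy+\sqrt{-1}))/(xy+tx^2+\sqrt{-1})$ satisfies
$$\phi_x+\phi\,\phi_y-(x\phi-y)^3=\frac{2t\,(tx-y)\,(1-2\sqrt{-1}\,xy)}{(xy+tx^2+\sqrt{-1})^3},$$
which does not vanish identically for $t\neq 0,\infty$ (evaluate at $(x,y)=(\varepsilon,0)$). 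Both $1$-forms $(xy+\sqrt{-1})dx-x^2dy$ and $(xy-\sqrt{-1})dx-x^2dy$ define geodesic foliations, but only the latter lies in the pencil of $y^2dx-(xy+\sqrt{-1})dy$; the displayed statement pairs each $\omega_0^{\pm}$ with the $\omega_\infty$ of the \emph{other} pencil. The corrected pencils,
$$\omega_t^{\pm}=\left(y^2dx-(xy\pm\sqrt{-1})dy\right)+t\left(x^2dy-(xy\mp\sqrt{-1})dx\right),$$
pass both the geodesic test for all $t$ and the common-leaf check along $y=tx$. (The same sign slip occurs in the paper's own statement, whose proof likewise stops short of the explicit transport; your ``one line of algebra'' is precisely the line that must be written down, and your final common-leaf verification, carried out on the uncorrected formula, is checking a pencil that is not geodesic.)
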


\begin{proof}Recall (see Proposition \ref{prop:only2specialSL2}) that there are exactly $2$ 
pencils of geodesic foliations for this special projective structure. 
We can just verify by computation that the pencils in the statement are geodesic, however
we think that it might be interesting to explain how we found them.

In order to find the elements of the pencils, we will use again the  $\mathrm{SL}_2(\CC)$-invariance. 
We first construct the two foliations (i.e. for the two pencils) having the geodesic $y=0$ as a special leaf,
i.e. $\omega_0^{\pm}$; then it will be easy to deduce the full pencil by making $\mathrm{SL}_2(\CC)$ acting on $\omega_0^{\pm}$.
In order to characterize $\omega_0^{\pm}$, let us go back to the dual picture $(S,C)$. The two foliations we are looking for 
come from the two fibers passing through $p_0=\{x_0=y_0=\infty\}$ in $C$, or equivalently the two fibers $u_0=\infty$ and $v_0=\infty$ 
on $\mathbb P^1\times\mathbb P^1$ (here we use coordinates given by formula (\ref{eq:CoordSpecialCovering})).
These curves are invariant by the action of the Borel subgroup 
$$\begin{pmatrix}a&b\\ 0& a^{-1}\end{pmatrix} \ :\ 
\left\{\begin{matrix}
(u_0,v_0)\mapsto \left(a^2u_0+ab,a^2v_0+ab\right)\hfill\\
(x_0,y_0)\mapsto\left(a^2x_0+ab,\frac{ay_0}{\sqrt{1-ay_0^2}}\right)
\end{matrix}\right\}$$
and they are the only one except the diagonal $\Delta$. Therefore, the two foliations we are looking for
are the only one that are invariant under the action of the Borel subgroup above, and distinct to the radial foliation.
Let $\omega=dy-f(x,y)dx$ be one of these foliations. Setting $(a,b)=(1,t)$, we get $\varphi^t(x,y)=(x+ty,y)$ and
$$(\varphi^t)^*\omega\wedge\omega=0,\ \forall t\ \ \ \Leftrightarrow\ \ \ yf_x+f^2=0\ \ \ 
\text{(where }f_x:=\frac{df}{dx}\text{)}.$$
Now, setting $(a,b)=(e^t,0)$, we get $\varphi^t(x,y)=(e^tx,e^{-t}y)$ and
$$(\varphi^t)^*\omega\wedge\omega=0,\ \forall t\ \ \ \Leftrightarrow\ \ \ 2f+xf_x-yf_y=0\ \ \ 
\text{(and }f_y:=\frac{df}{dy}\text{)}.$$
Finally, the foliation $\mathcal F_\omega$ is geodesic if, and only if, the corresponding surface $\{z=f\}$
in $\mathbb P(T\mathbb C^2)$ is invariant by the geodesic foliation defined by 
$v=\partial_x+z\partial_y+(xz-y)^3\partial_z$. In other words
$$ i_vd(z-f)\vert_{z=f}=0\ \ \ \Leftrightarrow\ \ \ (xf-y)^3-f_x-f f_y=0.$$
The combination of the three constraints gives (after eliminating $f_x$ and $f_y$)
$$\left(f-\frac{y}{x}\right)\left(f-\frac{y^2}{xy+\sqrt{-1}}\right)\left(f-\frac{y^2}{xy-\sqrt{-1}}\right)=0.$$
The first factor gives the radial foliation and the two other ones yield $\omega_0^{\pm}$.
We conclude by applying the one-parameter subgroup $\varphi^t(x,y)=(x,tx+y)$ to each of these foliations
and get the two pencils.\end{proof}

\begin{remark} It is interesting to complete the picture by the well-know two-fold cover 
$$\pi'\ :\ \left\{\begin{matrix}
\mathbb P^1\times\mathbb P^1&\to&\Pp^2\\
\Delta&\to&C
\end{matrix}\right.\ ;\ \ \ 
([u_0: u_{\infty}], [v_0: v_{\infty}])\mapsto [u_0 v_0 : u_0 v_{\infty}+ u_{\infty}v_0 : u_{\infty}v_{\infty}]$$
ramifying over $C:=\pi'(\Delta)$,  a conic. This map $\pi'$ is the quotient by the involution $\varphi:(u,v)\mapsto(v,u)$
on $\mathbb P^1\times\mathbb P^1$ that fixes the diagonal $\Delta$. We therefore have successive 
ramified covers
$$(S,C_0)\stackrel{\pi}{\to}(\mathbb P^1\times\mathbb P^1,\Delta)\stackrel{\pi'}{\to}(\Pp^2,C)$$
described in Figure \ref{fig:covering}. 
Since $(S,C_0)$ is not linearizable, the same holds for the two other ones (linearization can be lifted).
So the $(+4)$-neighborhood of a conic $C\subset \mathbb P^2$ is not linearizable. In fact, we can say more.
\begin{figure}[ht!]
  \centering
    \includegraphics[scale=0.50]{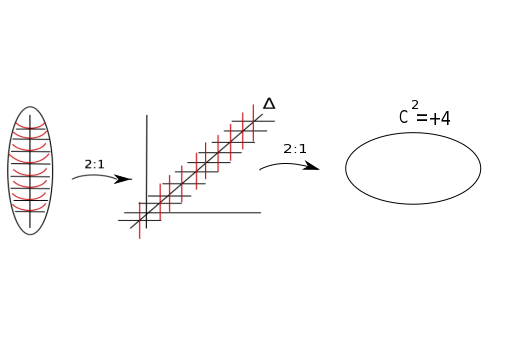}
    \vspace{-1.0cm}
  \caption{Coverings}
  \label{fig:covering}
\end{figure}

We can give local coordinates $(x_0,y_0)$, $(x_{\infty}, y_{\infty})$ of $(\PP, C)$ such that
\begin{equation}\label{eq:CoordCoveringConic}
\left\{\begin{matrix}
2 x_0&=&u_0+v_0\\
y_0&=&(u_0 - v_0)^2
\end{matrix} \right. ,
\hspace{0.5cm} \left\{\begin{matrix}
2x_\infty&=&u_{\infty}+v_{\infty}\\
y_\infty&=&(u_{\infty} - v_{\infty})^2
\end{matrix} \right. ,
\hspace{0.5cm} (u_\infty,v_\infty) = \left( \frac{1}{u_0}, \frac{1}{v_0}  \right)
\end{equation}
where $\Delta=\{v_0=u_0\}=\{v_\infty=u_\infty\}$, $C=\{y_0=0\}=\{y_{\infty}=0\}$. So the cocycle is explicitely given by 
\begin{equation}\label{eq:CocycleConic}
\Phi(x_0, y_0)=\left( \frac{1}{x_0}+\frac{y_0}{4x_0^3}+ \frac{y_0^2}{16x_0^5} +\ldots, \frac{y_0}{x_0^4} +\frac{y_0^2}{2x_0^6} +\frac{3y_0^3}{16x_0^8}+\ldots \right).
\end{equation}
As before, we consider diffeomorphisms $$\Phi^i(x,y) =\left( x+\sum_{n\geq 1}a_n^i (x) y^n,  \sum_{n\geq 1}b_n^i (x) y^n\right),$$ with $b_1^i(0)\neq 0$, $i=0, \infty$, obtaining
$$
\Phi^{\infty}\circ \Phi \circ \Phi^0 (x,y) = \left( \frac{1}{x} + \left( \frac{a_1^0(x) }{x^2} + \frac{b_1^0(x)}{4x^3} + \frac{a_1^{\infty}(\frac{1}{x})b_1^0(x)}{x^4}\right)y +\ldots ,  b_1^{\infty}\left(\frac{1}{x}\right)b_1^0(x)\frac{y}{x^4}  +\ldots \right).
$$
As a consequence we see that there is no transverse fibration in the formal neighborhood at first order.
\end{remark}

\subsection{Two fibrations in general position} 
The goal of this section is to show that there are many $(+1)$-neighborhoods with exactly two fibrations;
 surprisingly, they are easy to classify.

Suppose that the germ of $(+1)$-neighborhood $(S,C_0)$ admits two transverse fibrations $\HH_1$ and $\HH_2$,
such that their tangent locus $T = \tang(\HH_1, \HH_2)$ is neither a leaf, nor $C_0$. 
Remember (see Proposition \ref{prop:tang+1})
that $T$ is smooth and intersects $C_0$ transversely in one point $p_0$, say $x=0$.
We say that $\HH_1$ and $\HH_2$ are in {\it general position} near $C_0$ if, moreover,
the curve $T$ is transversal to $\HH_1$ (and therefore $\HH_2$). Note that, maybe translating 
to the neighborhood of a nearby fiber $C_\epsilon$, we can always assume $\HH_1$ and $\HH_2$ in general position.
To state our result, denote 
$$\mathrm{Diff}^{\ge k}(\mathbb C,0):=\{\varphi(z)\in\mathbb C\{z\}\ ;\ \varphi(z)=z+o(z^k)\}$$
the group of germs of diffeomorphisms tangent to the identity at the order $\ge k$ and denote 
$\mathrm{Diff}^{k}(\mathbb C,0):=\mathrm{Diff}^{\ge k}(\mathbb C,0)\setminus\mathrm{Diff}^{\ge k+1}(\mathbb C,0)$
the set of those tangent precisely at order $k$. The group
$$A:=\{\varphi(z)=az/(1+bz)\ :\ a\in\mathbb C^*,\ b\in\mathbb C\}=\mathrm{PGL}_2(\mathbb C)\cap\mathrm{Diff}(\mathbb C,0)$$
acts by conjugacy on each $\mathrm{Diff}^{\ge k}(\mathbb C,0)$ and therefore on $\mathrm{Diff}^{k}(\mathbb C,0)$.

\begin{thm}\label{thm:TangGenrericModuli}
Germs of $(+1)$-neighborhoods $(S,C_0)$ that admit two transversal fibrations $\HH_1$ and $\HH_2$
in general position are in one to one correspondance with the quotient set 
$$\mathrm{Diff}^1(\mathbb C,0)/A.$$
\end{thm}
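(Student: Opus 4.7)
The plan is to associate to each quadruple $(S,C_0,\HH_1,\HH_2)$ a germ $\phi\in\mathrm{Diff}^1(\mathbb C,0)$ well-defined up to the $A$-action, and then to realize every $A$-orbit by an explicit inverse construction, yielding the desired bijection. Throughout, the labelled pair $(\HH_1,\HH_2)$ is treated as ordered; swapping them sends $\phi$ to $\phi^{-1}$, which is in general not $A$-conjugate to $\phi$, so this ordering is essential.

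For the invariant, I would first set $p_0:=T\cap C_0$ (a single point, by Proposition~\ref{prop:tang+1}) and fix a coordinate $x:C_0\stackrel{\sim}{\to}\mathbb P^1$ with $x(p_0)=0$. Since $T$ is transverse to each $\HH_i$ by the general position hypothesis, each $\HH_i$-fiber through a point of $C_0$ meets $T$ exactly once near $p_0$, giving two biholomorphisms $\tau_i:(C_0,p_0)\stackrel{\sim}{\to}(T,p_0)$, and I would set
\[
\phi:=\tau_1^{-1}\circ\tau_2\ \in\ \mathrm{Diff}(C_0,p_0)\ \simeq\ \mathrm{Diff}(\mathbb C,0).
\]
A change of coordinate on $C_0$ is given by some $\varphi\in\mathrm{PGL}_2(\mathbb C)$ fixing $p_0$, i.e.\ by $\varphi\in A$; it replaces $\tau_i$ by $\varphi\circ\tau_i$ and hence conjugates $\phi$ by $\varphi$. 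An isomorphism of quadruples likewise induces an element of $A$, so the $A$-orbit of $\phi$ is an honest invariant. To see that $\phi\in\mathrm{Diff}^1$, I would work in local coordinates $(x,y)$ on $(S,p_0)$ with $C_0=\{y=0\}$ and $h_1=x$; then $h_2|_{C_0}=x$ forces $h_2=x+y\tilde f(x,y)$, $T=\{\tilde f+y\tilde f_y=0\}$, and the general position assumption translates to $\tilde f(0,0)=0$, $\tilde f_x(0,0)\neq 0$, $\tilde f_y(0,0)\neq 0$. Parametrizing $T=\{(x,\tau(x))\}$ one obtains $\phi(x)-x=\tau(x)\tilde f(x,\tau(x))$, which vanishes to order exactly $2$ because $\tau'(0)\neq 0$ and $\tilde f_x(0,0)+\tilde f_y(0,0)\tau'(0)\neq 0$.

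For surjectivity, given $\phi\in\mathrm{Diff}^1(\mathbb C,0)$ I would reconstruct $(S,C_0,\HH_1,\HH_2)$ as follows. Let $V$ be a germ of neighborhood of $\Delta\subset\mathbb P^1\times\mathbb P^1$ containing the graph $G_\phi:=\{(s,\phi(s))\}$, and let $\pi:W\to V$ be the double cover ramified along $G_\phi$, given locally, in coordinates where $\Delta=\{s=t\}$, by $w^2=t-\phi(s)$. Since $\phi(s)-s=\alpha s^2+O(s^3)$ with $\alpha\neq 0$, the preimage $\pi^{-1}(\Delta)=\{w^2=s-\phi(s)\}\cap\{s=t\}$ decomposes into two smooth rational curves $\Delta_1,\Delta_2$ meeting transversely at the unique ramification point $(0,0,0)$. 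From $(\Delta_1+\Delta_2)^2=\pi^*\Delta\cdot\pi^*\Delta=2\,\Delta^2=4$, $\Delta_1\cdot\Delta_2=1$, and the symmetry $w\mapsto -w$, one reads off $\Delta_i\cdot\Delta_i=1$. I would then set $(S,C_0):=(\text{germ of }W\text{ along }\Delta_1,\,\Delta_1)$ and $\HH_i:=\pi^*pr_i$, obtaining a $(+1)$-neighborhood with two transverse fibrations of tangency locus $T=\{w=0\}$ meeting $C_0$ transversely at one point in general position. A direct parametrization of $\Delta_1$ and $T$ by $s$ shows $\tau_1=\mathrm{id}$ and $\tau_2=\phi^{-1}$, recovering the $A$-orbit of $\phi$. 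Any other neighborhood producing this orbit is then isomorphic to the one just built, yielding injectivity as well.

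The main obstacle is the bookkeeping in the inverse construction: the self-intersection computation $\Delta_1\cdot\Delta_1=+1$, checking that the lifted fibrations are genuinely in general position (rather than sharing a leaf or tangent along $C_0$), and most importantly verifying that the reconstruction inverts the invariant modulo $A$. The latter is essentially the statement that the full four-parameter Mishustin gauge (Corollary~\ref{Cor:ActionLinGroup}), combined with the freedom in choosing the coordinate on $C_0$, collapses—after imposing the existence of the ordered pair of fibrations in general position—to precisely the two-parameter action of $A$ on $\mathrm{Diff}^1(\mathbb C,0)$. Equivalently, one must rule out any hidden gauge transformation mixing distinct $A$-orbits.
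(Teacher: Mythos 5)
Your overall strategy coincides with the paper's: the invariant you read off from $(\tau_1,\tau_2)$ is, up to inversion, exactly the curve $\Sigma=\Phi(T)\subset(\Pp^1\times\Pp^1,\Delta)$ used in the paper (in the coordinate $x$, $\Sigma$ is the graph of $\tau_2^{-1}\circ\tau_1=\phi^{-1}$), your local computation that $\phi$ is tangent to the identity to order exactly one is correct (indeed $\tau'(0)=-\tilde f_x/(2\tilde f_y)$ gives $\tilde f_x+\tilde f_y\tau'(0)=\tilde f_x/2\neq0$), and identifying the residual gauge with the conjugation action of $A$ is the right mechanism. A small inconsistency: your reconstruction from $\phi$ gives $\tau_1=\mathrm{id}$ and $\tau_2=\phi^{-1}$, hence the invariant $[\phi^{-1}]$ rather than $[\phi]$; this is harmless because inversion commutes with $A$-conjugation and so induces a bijection of $\mathrm{Diff}^1(\mathbb C,0)/A$, but it sits awkwardly with your (correct) insistence that $\phi$ and $\phi^{-1}$ are in general not $A$-conjugate.

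Two steps are genuinely missing. First, injectivity: you assert that any neighborhood with the same orbit is isomorphic to the reconstructed one, and then list precisely this as the main obstacle, so it is not proved. The paper closes it by observing that $\Phi=(h_1,h_2):(S,C_0\cup C_1)\to(\Pp^1\times\Pp^1,\Delta)$ is itself a degree-two cover branched over $\Sigma$ and a local biholomorphism away from $p_0$; such a cover is determined by its branch curve (the relevant $2$-torsion in $\mathrm{Pic}$ of a germ of neighborhood of $\Delta$ vanishes, since $\mathrm{Pic}$ is an extension of $\mathbb Z$ by the vector space $H^1(\mathcal O)$), and the deck involution exchanges $C_0$ and $C_1$ while preserving both fibrations, so the whole quadruple is recovered from $\Sigma$, i.e.\ from $[\phi]$ once the $A$-ambiguity in the coordinate $x$ is accounted for. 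Second, the existence of your global double cover of a germ of neighborhood $V$ of $\Delta$ ramified along the germ of curve $G_\phi$ is not automatic: one needs a square root of $\OO_V(G_\phi)$ in $\mathrm{Pic}(V)$, and $\mathrm{Pic}^0(V)$ is an infinite-dimensional group for such germs. The square root does exist (the degree $G_\phi\cdot\Delta=2$ is even and $\mathrm{Pic}^0(V)$ is divisible), but this requires an argument; the paper avoids the issue altogether by writing $h_1=x$ and $h_2=x+x\varphi(y)+(x-y)\,y\,\varphi'(y)$ explicitly near $p_0$ and gluing a two-chart $(+1)$-neighborhood with the cocycle $\left(\frac{1}{h_1},\frac{1}{h_2}-\frac{1}{h_1}\right)$, on which both fibrations visibly extend and are in general position.
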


\begin{proof}
Like in the proof of Theorem \ref{thm:RigiditySpecialExample}, consider $h_1,h_2 : S \rightarrow \Pp^1$ 
the first integrals of $\HH_1$ and $\HH_2$ whose restrictions on $C_0$ coincide with the global parametrization $x:C_0\stackrel{\sim}{\to}\mathbb P^1$. Now, consider the map
$$
\pi :S \rightarrow  \Pp^1 \times \Pp^1\ ;\  \pi(p) = (h_1(p), h_2(p))
$$
and denote by $\Sigma:=\pi(T)$ the critical locus.  One can check that, in the neighborhood of $p_0$, $\pi$ is a $(k+1)$-fold cover ramifying over $\Sigma$;
indeed, the two fibers $\{h_1=0\}$ and $\{h_2=0\}$ have a contact of order $k+1$ at $p_0$ (see \cite[Section 5]{MaycolFrank} for details). In fact, we can extend
this ramified cover over the neighborhood of $\Delta\subset\mathbb P^1\times\mathbb P^1$ and get 
$$\pi:(S,C_0\cup C_1\cup\cdots\cup C_k)\stackrel{(k+1):1}{\longrightarrow}(\mathbb P^1\times\mathbb P^1,\Delta)$$
where $C_0,\ldots,C_k$ are the preimages of $\Delta$, that intersect transversely at $p_0$. After selecting 
one of them, say $C_0$, we get our initial map $\pi:(S,C_0)\to(\mathbb P^1\times\mathbb P^1,\Delta)$.
From this point of view, it becomes clear that the lack of unicity comes from the choice $C_0$ among 
$C_0,\ldots,C_k$, and the corresponding neighborhoods might be not isomorphic: $(S,C_i)\not\simeq(S,C_0)$.

However, in the case $\HH_1$ and $\HH_2$ are in general position, then $k=1$ and the covering,
being of degree $2$, is automatically galoisian: $(S,C_0)\simeq(S,C_1)$. Then we have a one-to-one 
correspondance between smooth curves $\Sigma$ at $q_0$ having a simple tangency with $\Delta$
and $(+1)$-neighborhoods $(S,C_0)$ with $\HH_1$, $\HH_2$ having a simple tangency at $p_0=\{x=0\}$, 
up to isomorphism preserving the fixed parametrization $x:C_0\to\mathbb P^1$. 

Finally, the change of parametrization induces a diagonal action of the group $A$ on $\mathbb P^1\times\mathbb P^1$.
Viewing $\Sigma$ as the graph of a germ of diffeomorphism $v=\varphi(u)$, we see that $\varphi(u)=u+cu^2+\cdots$, 
$c\not=0$, and the diagonal action induced by $A$ is an action by conjugacy on $\varphi$, whence the result.
\end{proof}

\begin{remark}If we consider the very special $(+1)$-neighborhood $(S,C_0)$ studied along section \ref{tancency-along-C_0},
after specializing to the neighborhood of any deformation $C_\varepsilon\not=C_0$, 
we get a new $(+1)$-neighborhood $(S,C_\varepsilon)$ with two fibrations in general position.
This neighborhood does not depend on the choice of $\varepsilon$ since $\mathrm{SL}_2(\mathbb C)$
acts transitively on those rational curves; it is easy to check that it corresponds to the class of the diffeomorphism
$\varphi(u)=u/(1-u)$, i.e. to a curve $\Sigma$ given by a bidegree $(2,2)$ curve (a deformation of $\Delta$).
\end{remark}

\subsection{Proof of Theorem \ref{3-fibrations}}\label{sec:proof3fibrat}
Before proving the theorem we make some considerations. First of all, recall (see Proposition \ref{prop:NormalFormFibration})
that there are normal forms 
$$
\Phi = \left(\frac{1}{x}, \frac{y}{x} + \sum_{V(-2,3)}b_{m,n}x^my^n\right),  
\Phi_i = \left(\frac{1}{x}, \frac{y}{x} + \sum_{V(-2,3)}b^i_{m,n}x^my^n \right),\hspace{0.2cm} i=1,2,
$$
compatible with $\HH$, $\HH_1$ and $\HH_2$ respectively, and denote by $\vartheta_i=(\alpha_i, \beta_i, \gamma_i,1)$ 
the parameter corresponding to the change of cocycle  from $\Phi$ to $\Phi^i$, $i=1,2$.

\xymatrix{
&&&&& \Phi  \ar[ld]_{(\alpha_1, \beta_1, \gamma_1, 1 )} \ar[rd]^{(\alpha_2, \beta_2, \gamma_2, 1 )} & \\
&&&&\Phi_1  & & \Phi_2 
}
All along the section, we assume moreover $(\alpha_1, \beta_1, \gamma_1)\not=(\alpha_2, \beta_2, \gamma_2)$ 
and are both $\not=(0,0,0)$ (otherwise two of the three fribations coincide).

\begin{lemma}\label{(a1,0,0,1), (0,b2,0,1)}
If $b_{-2,3}=0$, $\vartheta_1=(\alpha,0,0)$ and $\vartheta_2=(0,\beta,0)$,  $\alpha\beta\not=0$,
then $b_{m,n}=0$ for every $(m,n)$ and the neighborhood is linearizable.
\end{lemma}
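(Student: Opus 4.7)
The plan is to proceed by induction on $n\ge 3$ and show that all coefficients $b_{m,n}$ with $(m,n)\in V(-2,3)$ vanish. The base case $n=3$ is immediate since the only such coefficient is $b_{-2,3}$, which vanishes by hypothesis. Assume now inductively that $b_{m,k}=0$ for all $3\le k\le n-1$ and all admissible $m$. A crucial consequence is that both $k^0(y_0)=\sum_{k\ge n}b_{-2,k}y_0^k$ and $k^\infty(y_\infty)=\sum_{k\ge n}b_{-k+1,k}y_\infty^k$ (appearing in Proposition \ref{PropMishustinFreedom}) vanish to order $n$ in their argument; this substantially simplifies the expansion of the normalizing pairs.

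I would next compute the leading $Y_0^n$ coefficient of the $a$-part of $\Phi_1=\vartheta_1\cdot\Phi=\Psi^\infty\circ\Phi\circ(\Psi^0)^{-1}$, where for $\vartheta_1=(\alpha,0,0,1)$ we have $\Psi^\infty=(x_\infty+\alpha y_\infty,y_\infty)$ and $\Psi^0$ as in (\ref{eq:FreedomMishustin}). Two kinds of contributions appear at order $Y_0^n$: the $\Psi^0$-normalization produces $-\alpha b_{-2,n}Y_0^n/X_0^2$, while the term $\alpha B(x_0,y_0)$ pulled back yields $\alpha Y_0^n\sum_{m=-2}^{-(n-1)}b_{m,n}X_0^m$; their $X_0^{-2}$-pieces cancel and leave an $a$-part
\[
\alpha\,Y_0^n\sum_{m=-3}^{-(n-1)}b_{m,n}X_0^m.
\]
Since $\Phi_1$ must have zero $a$-part (Proposition \ref{prop:NormalFormFibration}) and $\alpha\ne 0$, this forces $b_{-3,n}=b_{-4,n}=\cdots=b_{-(n-1),n}=0$.

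A parallel analysis for $\vartheta_2=(0,\beta,0,1)$, now with $\Psi^0=(x_0+\beta y_0,y_0)$ and $\Psi^\infty$ involving $k^\infty$, should produce for the $a$-part of $\Phi_2=\vartheta_2\cdot\Phi$ the expression
\[
-\beta\,Y_0^n\sum_{m=-3}^{-(n-1)}b_{m+1,n}X_0^m,
\]
where the cancellation at $X_0^{-n}$ between the $B$-contribution and the $k^\infty$-term uses that $y_\infty=Y_0/X_0+O(Y_0^2)$ at low order. Setting this to zero gives $b_{-2,n}=b_{-3,n}=\cdots=b_{-(n-2),n}=0$, and together with the $\vartheta_1$-constraints one covers every $(m,n)\in V(-2,3)$ with second coordinate $n$. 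This completes the induction and reduces $\Phi$ to the linear cocycle $(\tfrac{1}{x},\tfrac{y}{x})$, so that $(S,C_0)\simeq(\PP,L)$.

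The main obstacle is the leading-order bookkeeping in the expansion of $\Psi^\infty\circ\Phi\circ(\Psi^0)^{-1}$: one must carefully isolate the $Y_0^n$-coefficient of the first component and verify the two key cancellations (the $X_0^{-2}$-cancellation for $\vartheta_1$ and the $X_0^{-n}$-cancellation for $\vartheta_2$). The inductive hypothesis is precisely what makes this tractable, as it guarantees that $k^0$, $k^\infty$, and the ``lower-$n$'' part of $B$ do not contaminate the leading $Y_0^n$-term, so that only the coefficients $b_{m,n}$ at the current step $n$ appear in the resulting linear equations.
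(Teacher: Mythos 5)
Your proposal is correct and follows essentially the same route as the paper's proof: an induction on the order $n$ in $y$, computing the leading $y^n$-coefficient of the $a$-part of $\vartheta_1\cdot\Phi$ and $\vartheta_2\cdot\Phi$ via the normalizing pairs of Proposition \ref{PropMishustinFreedom}, and using that these $a$-parts must vanish to kill all $b_{m,n}$ at each order (your computed shifts $a^1_{m,n}=\alpha b_{m,n}$ and $a^2_{m,n}=-\beta b_{m+1,n}$ agree with formula (\ref{eq:ActionCoeffCocycle}) and with the paper's displayed expansions). The only cosmetic difference is the split of labor between the two fibrations: the paper uses $\vartheta_1$ to kill all $m\neq-2$ and $\vartheta_2$ only for $b_{-2,n}$, whereas you let the two constraint sets overlap, which is equally valid.
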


\begin{proof}
From formula (\ref{eq:ActionCoeffCocycle}), we already get  $0=a^{1}_{-3,4}= \alpha b_{-3,4}$ and $0=a^{2}_{-3,4}= \beta b_{-2,4}$. Here, the notation $a^i_{n,m}$ stands for the $a$-coefficients of $\Phi^i$ obtained by applying 
the change $\vartheta_i$ to a general cocycle $\Phi$.
Assume by induction that $b_{m,k}=0$ for every $k<n$. Then 
$$
\Phi = \left( \frac{1}{x}, \frac{y}{x} + (\frac{b_{-2,n}}{x^2} + \ldots + \frac{b_{1-n,n}}{x^{n-1}})y^n + \ldots \right).
$$
The change of coordinates sending $\Phi$ to $\Phi_{1}$ takes the form (see Proposition \ref{PropMishustinFreedom})
$$
\Psi_1^{0} = \left(\frac{x}{1+\alpha y} -\alpha b_{-2,n}y^n+o(y^n), \frac{y}{1 + \alpha y} \right), \hspace{0.3cm} 
\Psi_1^{\infty} = (x+\alpha y, y),
$$
and we can check by direct computation that  
$$
\Phi_1 = \left(\frac{1}{x} + \alpha (\frac{b_{-3,n}}{x^2} + \ldots + \frac{b_{1-n,n}}{x^{n-1}})y^n+o(y^n), \frac{y}{x}+o(y) \right)
$$
so that we deduce $b_{m,n}=0$ for $m\not=-2$. On the other hand
$$
\Psi_2^{0} = (x+\beta y, y), \hspace{0.3cm}  \Psi_2^{\infty} =\left(\frac{x}{1+\beta y} +\beta b_{1-n,n}y^n+o(y^n), \frac{y}{1 +\beta y} \right),
$$
we see in a similar way that $a^2_{-3,n}=\beta b_{-2,n}=0$. We conclude by induction.
\end{proof}

\begin{lemma}\label{(a1,0,0,1), (a2,0,0,1)}
If $b_{-2,3}=0$ and $\vartheta_i=(\alpha_i,0,0)$ with $\alpha_1\not=\alpha_2$ both non zero, 
then $b_{m,n}=0$ for every $(m,n)$ and the neighborhood is linearizable.
\end{lemma}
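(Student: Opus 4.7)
The plan is to induct on $n \geq 3$ and show $b_{m,n} = 0$ for every admissible $m$; the base case $n = 3$ is precisely the hypothesis $b_{-2,3} = 0$. For the inductive step I assume $b_{m,k} = 0$ for all $k < n$ and seek to kill every $b_{m,n}$.

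First I would write down the normalizing pair from Proposition \ref{PropMishustinFreedom} for $\vartheta = (\alpha, 0, 0, 1)$, namely $\Psi^\infty = (x_\infty + \alpha y_\infty,\, y_\infty)$ and $\Psi^0 = \bigl(\tfrac{x_0}{1+\alpha y_0} - \tfrac{\alpha k^0(y_0)}{(1+\alpha y_0)^2},\, \tfrac{y_0}{1+\alpha y_0}\bigr)$ with $k^0(y) = \sum_{k \geq 3} b_{-2,k}\, y^k$. The requirement that $\vartheta \cdot \Phi$ have vanishing $a$-part is equivalent to $(\Psi^\infty \circ \Phi)_1 \equiv 1/\Psi^0(x,y)_1$ as a formal series in $(x^{-1},y)$. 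Expanding the geometric series for $1/\Psi^0(x,y)_1$ and cancelling the $m = -2$ slice of $\alpha B(x,y)$ against $\alpha k^0(y)/x^2$, this collapses to the clean identity
$$E_\alpha(x,y) \;:=\; \alpha \sum_{m \leq -3} b_{m,k}\, x^m y^k \;-\; \sum_{j \geq 2} \frac{\alpha^j\, (k^0(y))^j}{x^{j+1}\,(1 + \alpha y)^{j-1}} \;\equiv\; 0.$$

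Under the induction hypothesis $k^0(y) = b_{-2,n}\, y^n + O(y^{n+1})$ and $(k^0)^j = O(y^{jn})$, I would read off two crucial coefficients of $E_\alpha$. For each $m \leq -3$ the coefficient of $x^m y^n$ collapses to $\alpha\, b_{m,n}$, since every correction from the sum over $j$ lives in $y$-degree $\geq 2n$; picking either $\alpha = \alpha_1$ or $\alpha = \alpha_2$ (both non-zero) forces $b_{m,n} = 0$ for all $m \leq -3$. The coefficient of $x^{-3}\, y^{2n}$ collapses to $\alpha\, b_{-3,2n} - \alpha^2\, b_{-2,n}^2$: only the $j = 2$ term contributes at the $x^{-3}$ slice (higher $j$ live in $x^{\leq -4}$), and the $y^{2n}$-coefficient of $(k^0)^2/(1 + \alpha y)$ equals $b_{-2,n}^2$ because the induction annihilates every cross-product $b_{-2,k}\, b_{-2,\,2n-k}$ with $k < n$. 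Imposing the vanishing for both $\alpha_1$ and $\alpha_2$ yields $\alpha_1\, b_{-2,n}^2 = \alpha_2\, b_{-2,n}^2$, and the distinctness of $\alpha_1, \alpha_2$ gives $b_{-2,n} = 0$, closing the induction.

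The main obstacle is the bidegree bookkeeping at $x^{-3}\, y^{2n}$: I need to be sure that no $j \geq 3$ term in the geometric series contaminates it (which is ruled out by the $x$-exponent count) and that the only quadratic-in-$b$ monomial present is $b_{-2,n}^2$ (here the induction, ultimately bootstrapped off $b_{-2,3} = 0$, does all the work). The conceptual heart of the argument is that a single non-zero $\alpha$ merely gives one linear equation mixing $b_{-3,2n}$ with $b_{-2,n}^2$---which is what forces Lemma \ref{(a1,0,0,1), (0,b2,0,1)} to invoke a \emph{different} shape of $\vartheta_2$---whereas here two distinct $\alpha_1, \alpha_2$ separate the $\alpha$- and $\alpha^2$-dependencies and force the quadratic coefficient $b_{-2,n}^2$ itself to vanish. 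Consequently every $b_{m,n}$ is zero, $\Phi = (1/x,\, y/x)$, and $(S, C_0) \simeq (\PP, L)$.
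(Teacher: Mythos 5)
Your proof is correct and follows essentially the same strategy as the paper's: induct on the $y$-degree, use a single nonzero $\alpha_i$ to kill the coefficients $b_{m,n}$ with $m\le -3$ linearly, and use the distinctness of $\alpha_1,\alpha_2$ to separate the $\alpha$-linear unknown $b_{-3,\cdot}$ from the $\alpha^2$-coefficient $b_{-2,n}^2$ in the quadratic obstruction. Your closed-form identity $E_\alpha\equiv 0$ (obtained by rewriting the fibration condition as $(\Psi^\infty\circ\Phi)_1=1/\Psi^0_1$ in the source coordinates) is a clean and valid repackaging of the paper's coefficient computations, and your bookkeeping --- killing $b_{-2,n}$ at bidegree $(-3,2n)$ --- is if anything more precise than the paper's sketch, which reads the same quadratic obstruction off $a^i_{-3,8}$ (where the displayed formula should in fact read $\alpha_i\bigl(b_{-3,8}-\alpha_i b_{-2,4}^2\bigr)$).
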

\begin{proof}
We prove by induction on $n$, in a very similar way, that $b_{m,n}=0$ for $m\not=-2$, until $n=7$:
for instance, at each step, we get $a^1_{m,n}=\alpha_1b_{m,n}$ (we use only two fibrations so far).
Then, for $n=8$ we find that $a^{i}_{-3,8}=\alpha_i (b_{-3,8} - \alpha_i b_{-2,4})=0$.
Of course, since $\alpha_1\not=\alpha_2$ we get both $b_{-3,8}=b_{-2,4}=0$.
For $n>8$, the induction shows that $b_{-2,n-4}=b_{-3,n}=\cdots=b_{1-n,n}=0$ and we get the result.
\end{proof}

\begin{proof}[Proof of Theorem \ref{3-fibrations}]
We consider the following cases.

\textbf{Case 1: $\tang(\HH, \HH_1) \cap   \tang(\HH, \HH_2)=\emptyset$.}  We change parametrization $x:C_0\to\mathbb P^1$ so that   $\tang(\HH, \HH_1) \cap C_0 =\{x=0\}$ and $\tang(\HH, \HH_2) \cap C_0 =\{x=\infty \}$, which implies $\beta_1 = \alpha_2 =0$. Note that $\alpha_1 \beta_2 \neq 0$ because we are not in the case of tangency along $C_0$.

If $b_{-2,3}\not=0$, we can assume after change of coordinate $\vartheta=(0,0,0,\theta)$ that $b_{-2,3}=1$. 
We consider the equations $a^1_{m,n}=0$ and $a^2_{3,n}=0$ for $n\le7$, we can solve and express 
all $b_{m,n}$, $n\le7$, in terms of $\alpha_1$, $\gamma_1$, $\beta_2$ and $\gamma_2$.
We replace them in the remaining coefficients $a^2_{m,n}$  for $n\le7$, $m\not=3$, 
and use Groebner basis in order to rewrite the ideal 
$$
\langle a^2_{-4,5}, a^2_{-4,6},  a^2_{-5,6}, a^2_{-4,7}, a^2_{-5,7}, a^2_{-6,7}  \rangle = \langle \gamma_2, \alpha_1 \beta_2 \rangle
$$
but this implies that $\alpha_1 \beta_2 =0$, which is not possible.

If $b_{-2,3}=0$ with a similar argument we arrive in $\gamma_1 = \gamma_2 =0$ and thus we conclude by lemma \ref{(a1,0,0,1), (0,b2,0,1)}.\\

\textbf{Case 2: $\tang(\HH, \HH_1) \cap   \tang(\HH, \HH_2)\cap C_0=\{x=0\}$.} 
In this case we can assume $\vartheta_i=(\alpha_i,0,\gamma_i,1)$ with $\alpha_1 \alpha_2 \neq 0$. 
Suppose first $(\alpha_1,\gamma_1)$ not parallel to $(\alpha_2, \gamma_2)$. 

If $b_{-2,3} = 1$, we use equations $a^1_{m,n}=0$, $3 \leq m < n \leq 7$, and  $a^2_{-3,5}= a^2_{-3,6}=a^2_{-3,7}=0$ in order to find all $b_{m,n}$ with $n\le7$ except $b_{-2,7}$ in terms of $\alpha_1$, $\gamma_1$, $\alpha_2$ and $\gamma_2$. Replacing them in $a^2_{-6,7}$ we obtain $\gamma_1=0$ or $\gamma_1 = 2$. The former case implies, by using $a^2_{-3,4}$, that $\gamma_2=2$ and this give us the equation $a^2_{-4,6}=6 \alpha_1 = 0$, impossible. On the other hand, the last case gives us $a^2_{-5,7} = 12(\alpha_1 \gamma_2 - \alpha_2 \gamma_1) =0$, and this also contradicts our hypothesis. We conclude that we are never in this case.

If $b_{-2,3}=0$, we use equations $a^1_{m,n}=0$, $3 \leq i < j \leq 7$, and  $a^2_{-3,5}= a^2_{-3,6}=a^2_{-3,7}=0$ in order to find 
all $b_{m,n}$ with $n\le7$ except $b_{-2,7}$ in terms of $\alpha_1$, $\gamma_1$, $\alpha_2$ and $\gamma_2$.
Replacing them in $a^2_{-3,4}$, $a^2_{-4,5}$ and $a^2_{-4,6}$, we arrive in $\gamma_1 = \gamma_2 =0$. We are in the hypothesis of lemma \ref{(a1,0,0,1), (a2,0,0,1)} and therefore we conclude the theorem.

Finally we consider the case $(\alpha_2, \gamma_2) = \lambda (\alpha_1, \gamma_1)$, for $\lambda \neq 0,1$. 

If $b_{-2,3}=0$, from $a^1_{-3,4} = a^2_{-3,4} = 0$ we obtain $\gamma_1 = \gamma_2 = 0$, 
and then we are able to apply lemma \ref{(a1,0,0,1), (a2,0,0,1)}. 

If $b_{-2,3}=1$, again from $a^1_{-3,4} = a^2_{-3,4} = 0$, we obtain $\gamma_1 = \gamma_2 = 0$ and $a^1_{-3,4} = \alpha_1 b_{-3,4}=0$. Now, from $a^1_{-3,5} = a^1_{-4,5} = 0$ we also get $b_{-3,5} = b_{-4,5} = 0$.  Therefore $a^1_{-3,6}= \alpha_1(b_{-3,6} -\alpha_1)=0$ and $a^2_{-3,6}= \alpha_2(b_{-3,6} -\alpha_2)=0$, which is a contradiction since $\alpha_1 \neq \alpha_2$.	 \end{proof}

\section{Application to Painlev\'e equations}\label{sec:Painleve}

The Painlev\'e equations provide projective structures whose geodesics are the graphs of Painlev\'e transcendents.
We provide the list of coefficients in table \ref{table:Painleve}. They depend on coefficients $\alpha,\beta,\gamma,\delta\in\CC$.

{\tiny
\begin{table}[h]
\caption{Painlev\'e equations}
\begin{center}
\begin{tabular}{|c|c|c|c|c|}
\hline
 equation & A & B & C & D \\
 \hline
PI & $6y^2+x$ & $0$ & $0$ & $0$ \\
\hline
PII & $2y^3+xy+\alpha$ & $0$ & $0$ & $0$ \\
\hline
PIII & $\frac{\alpha y^2+\beta}{x}+\gamma y^3+\frac{\delta}{y}$ & $-\frac{1}{x}$ & $\frac{1}{y}$ & $0$ \\
\hline
PIV & $\frac{3}{2}y^3+4xy^2+2(x^2-\alpha)y+\frac{\beta}{y}$ & $0$ & $\frac{1}{2y}$ & $0$ \\
\hline
PV &   
$\frac{(y-1)^2}{x^2}\left(\alpha y+\frac{\beta}{y}\right)
+\frac{\gamma y}{x}+\frac{\delta y(y+1)}{y-1}$ & $-\frac{1}{x}$ & $\frac{1}{2y}+\frac{1}{y-1}$ & $0$ \\
\hline
PVI & 2$\frac{y(y-1)(y-x)}{x^2(x-1)^2}\left(\alpha+ \frac{\beta x}{y^2}+ \frac{\gamma (x-1)}{(y-1)^2}+ \frac{\delta x(x-1)}{(y-x)^2}\right)$ 
& $-\left(\frac{1}{x}+\frac{1}{x-1}+\frac{1}{y-x}\right)$ & $\frac{1}{2}\left(\frac{1}{y}+\frac{1}{y-1}+\frac{1}{y-x}\right)$ & $0$ \\
\hline
\end{tabular}
\end{center}
\label{table:Painleve}
\end{table}
}

Painlev\'e equations have been found by Painlev\'e more than one hundred years ago when he was looking for 
new second-order differential equations of the form $y''=f(x,y,y')$, with $f$ polynomial, satisfying the Painlev\'e Property:
there is a finite set $S=\{x_1,\ldots,x_n\}\in\CC$ such that any local solution can be meromorphically continued 
along any path avoiding the set $S$. Linear differential equations have this property. Poincar\'e and Fuchs proved
that first-order differential equations $f(x,y,y')=0$ having that property either can be integrated by algebraic functions,
or can be reduced by change of coordinates to Riccati or Weierstrass differential equations.
Painlev\'e proved that a second-order differential equations $y''=f(x,y,y')$ with the Painlev\'e Property can be reduced
to an equation of Table \ref{table:Painleve} if it is {\it irreducible}, i.e. if it cannot be reduced to previously known cases 
(linear, first-order, quadrature).
In fact, due to some mistakes, he forgot some cases and the list was completed by R. Fuchs and Gambier. 
The Painlev\'e Property of these equations was proved by Okamoto by reducing the singularities of 
the Painlev\'e foliation by blow-ups. Painlev\'e equations can also be interpreted as differential equations 
for coefficients of isomonodromic deformations of linear differential systems. This has been proved by R. Fuchs 
in the Painlev\'e VI case, and 
later by Jimbo-Miwa-Ueno for all other cases, which turn out to be more complicated, because involving
systems with irregular singular points with a study of Stokes matrices. It turns out that this isomonodromy
property implies the Painlev\'e Property as shown by Malgrange.
The proof of irreducibility is more recent. It is a transcendance result, that has been proved by Nishioka and Umemura 
in terms of non linear differential Galois theory (see also \cite{CasaleIrred,SergeFrank}). The meaning of irreducibility
is that Painlev\'e equations are as general as a second-order differential equation can be, and more complicated that 
first order. For instance, Painlev\'e foliations cannot be contained in a codimension one foliation 
(see for instance \cite{CasaleIrred,SergeFrank}). 
This implies in particular that the corresponding projective structure on $\CC^2\ni(x,y)$ cannot be globally foliated,
defined by a Frobenius integrable Riccati equation on $\Pp(T\CC^2)$ (see section \ref{Sec:FlatFibration}).
We expect similar transcendental result even in restriction to smaller open sets. 
In that direction, we prove in many cases (see Propositions \ref{Prop:PIPII} and \ref{Prop:PVI})
that projective structures defined by Painlev\'e equations are not locally foliated, at the neighborhood 
of points $(x,y)=(x_0,y_0)$. 
 
But let us recall first the case of linearisability.
One can check that only the sixth one PVI with parameters $(\alpha,\beta,\gamma,\delta)=(0,0,0,\frac{1}{2})$ is locally linearizable,
and it corresponds to the Picard parameter for Painlev\'e equation (see \cite{GuyPicard}). 
In fact, if we apply the criterium of Liouville (see Proposition \ref{Equations-L1-L2}), then we see that
$L_2=0$ for all Painlev\'e equations, and $L_1=0$ only for the Picard one.

\begin{prop}\label{Prop:PIPII}The first two Painlev\'e equations are not locally foliated.
\end{prop}

\begin{proof} We use Proposition \ref{Prop:FoliatedCriterium} and and prove that we cannot find functions $f,g$ 
satisfying equations (\ref{eq:IntegEqfg}) with the coefficients given by the first two rows of table \ref{table:Painleve}.
Because $C=D=0$, the last equation $g_y=-g^2$ gives 
$$g(x,y)=\frac{1}{y+\psi(x)}$$
for a function $\psi$. Then, using moreover $B=0$, the second equation $f_y=g_x-2fg$ gives
$$f(x,y)=\frac{\varphi(x)-y\psi'(x)}{(y+\psi(x))^2}$$
for a function $\varphi$. Finally, the first equation yields
$$f_x-f^2-Ag+A_y=\frac{h(x,y)}{(y+\psi(x))^4}$$
where $h$ is a polynomial in $y$ with coefficients in $x$ (depending on $\varphi$, $\psi$ and their derivatives).
But the dominant term of $h$ is $6y^5$ and $4y^6$ 
for Painlev\'e equations I and II respectively.
Therefore,  whatever are $\varphi$ and $\psi$, $h$ cannot vanish identically.
\end{proof}

Similar computations seem out of reach for all cases, especially Painlev\'e VI case. Instead of this,
we will use our results proved along the paper to provide an alternate and more geometric approach
to prove non existence of foliated structure in the case of Painlev\'e VI. In fact, we will use the monodromy
representation that we now explain. 

The projective structure defined by Painlev\'e VI equation is holomorphic on 
$$(x,y)\in\Omega=\CC^2\setminus\{xy(x-1)(y-1)(y-x)=0\}.$$
Consider the geodesic (or Painlev\'e) foliation in the open chart
$\Omega\times\CC\ni(x,y,z)$ by setting $z=\frac{dy}{dx}$. One easily check that $\Sigma:=\{x=x_0\}\subset\Omega$ 
defines a cross-section for the foliation, for any $x_0\not=0,1$. The Painlev\'e Property of the equation 
allows us to continue meromorphically each local solution along any loop $\gamma:[0,1]\to\Omega$, $\gamma(0)=\gamma(1)=x_0$,
and therefore define a monodromy map $\phi_\gamma:\Sigma\dashrightarrow\Sigma$. This return map is only defined 
outside of an analytic set, but can be inversed. One can built a larger fiber bundle $x:\hat\Omega\to(\CC\setminus\{0,1\})$ 
with respect to which the foliation has a well-defined holonomy, therefore providing a representation 
$$\pi_1(\CC\setminus\{0,1\},x_0)\to\mathrm{Bihol}(\hat\Sigma)$$
into the group of biholomorphisms of a larger transversal $\hat\Sigma:=\{x=x_0\}\subset\hat\Omega$
(see \cite{Okamoto,DM,IIS,SergeFrank}). Precisely, Okamoto constructed in \cite{Okamoto} a partial compactification $\hat\Omega$
with the lifting path property to provide a geometric counterpart to Painlev\'e Property. The transversal $\hat\Sigma$
in this larger space is usually called Okamoto's space of initial conditions. The monodromy representation has been 
defined by Dubrovin-Mazzocco in \cite{DM} for special parameters, and was generalized to all parameters 
by Iwasaki (see \cite{IIS,SergeFrank} for a global picture). It turns out that the monodromy group corresponds
via the Riemann-Hilbert correspondance with an action of the Mapping-Class-Group of the $4$-punctured sphere
on the character variety for $\mathrm{SL}(2,\CC)$.
Some dynamical properties have been studied by Dubrovin-Mazzocco, Iwasaki-Uehara and Cantat using 
this later point of view. We will use a result obtained in \cite{SergeFrank}.

\begin{prop}\label{Prop:PVI}The Painlev\'e VI equation is not locally foliated, 
except for the Picard parameter $(\alpha,\beta,\gamma,\delta)=(0,0,0,\frac{1}{2})$.
\end{prop} 

\begin{proof} 
If it is foliated at some point $p\in\Omega$, then Corollary \ref{lem:ProlongationFol} allows us to extend meromorphically the foliated
structure along any path inside $\Omega$. Assume first that it is uniform, 
i.e. that we have a global meromorphic foliated structure on $\Omega$.
Then we can apply results of \cite{SergeFrank} about the monodromy of the Painlev\'e VI equation. 
It is shown in \cite[Theorem D]{SergeFrank} that the monodromy cannot preserve a foliation, and we get a contradiction.
Therefore, the foliated structure cannot be uniform. In fact, if it has finitely many determinations, then it defines a web
which is invariant under the monodromy, and again \cite[Theorem D]{SergeFrank} gives a contradiction.
Finally, if it has at least 3 determinations, then this implies by our 
Theorem \ref{3-fibrations} that the projective structure is linearizable, and therefore that we are in the  case of Picard parameters as in the statement.
\end{proof}

\begin{example}
In \cite{Hurtubise}, Hurtubise and Kamran provide linking formulae between 
Cartan invariants for projective structures, and coefficients $a_{m,n}$ and $b_{m,n}$
of normalized cocycle in Theorem \ref{Mishustin} up to order $n=7$. 
In a similar way, one can easily compute the normalized cocycle of Painlev\'e I equation up to order $n=11$
with Maple:
$$\Phi_{P_I}=(a(x,y),b(x,y))\ \ \ \text{with}$$
$$\left\{\begin{matrix}
a(x,y)=\frac{1}{x}+63\frac{y^6}{x^5}-\frac{5311}{2}\frac{y^9}{x^7}+\frac{209039}{154}\frac{y^{11}}{x^{10}}+o(y^{11})\hfill\\
b(x,y)=\frac{y}{x}-\frac{11}{2}\frac{y^4}{x^3}+\frac{489}{7}\frac{y^7}{x^5}+\frac{1243}{840}\frac{y^9}{x^8}-\frac{8137}{7}\frac{y^{10}}{x^{7}}+o(y^{11})
\end{matrix}\right.$$
This corresponds to the cocycle of $U^\vee$ dual to the projective structure induced by $y''=6y^2+x$
at $(\CC^2,0)$. However, it is difficult to compute many other examples as complexity increases fastly
with the size of entries.
\end{example}

\section{Concluding remarks}

\subsection{Coordinates on $C$}
In the classification, we have fixed a coordinate $x:C\to\mathbb P^1$. One could consider the action
of Moebius transformations on $C$ and therefore on $x$. For instance, the action of homotheties $x\mapsto\lambda x$
on normal forms is easy:
$$a_{m,n}'=\lambda^{m-1}a_{m,n}\ \ \ \text{and}\ \ \ b_{m,n}'=\lambda^{m-1}b_{m,n}.$$
If we add this action to the $4$-parameter group $\Gamma$, then orbits correspond to the analytic class
of $(S,C\supset\{p_0,p_\infty\})$ where we have fixed two points $p_i=\{x=i\}$ without fixing the coordinate on $C$.
If we blow-up $p_0$ and $p_\infty$, and then contract the strict transform of $C$, then we get a germ
$(\tilde S,C_0\cup C_\infty)$ of neighborhood of the union of two rational curves $C_0$ and $C_\infty$
(exceptional divisors) with self-intersection $C_i\cdot C_i=0$, that intersect transversally at a single point $p=C_0\cap C_\infty$
(the contraction of $C$). In fact, we can reverse this construction and have a one-to-one corespondance 
$$(S,C\supset\{p_0,p_\infty\})\ \leftrightarrow\ (\tilde S,C_0\cup C_\infty)$$
so that analytic classifications are the same. We note that the action of other Moebius transformations 
on normal forms $\Phi$ are much more difficult to compute. 

\subsection{The case of general positive self-intersection $C\cdot C>1$}\label{sec:GeralSelfInt}
Mishustin gave in \cite{Mishustin} a normal form for $(d)$-neighborhoods
for arbitrary $d\in\mathbb Z_{>0}$ and the story is similar to the case $d=1$. 
More geometrically, we can link the general case to the case $d=1$ as follows.
Let $d>1$ and $(S,C)$ a $(d)$-neighborhood. Then, maybe shrinking $S$, 
the topology of $(S,C)$ is the same than the topology of $(N_C,0)$, in particular:
$$\pi_1(S\setminus C)\simeq\mathbb Z/<d>$$
i.e. the fundamental group of the complement of $C$ is cyclic of order $d$. We can 
consider the corresponding ramified cover 
$$(\tilde S,\tilde C)\stackrel{k:1}{\rightarrow} (S,C)$$
totally ramifying at order $d$ over $C$, and inducing the cyclic cover of order $d$
over the complement $S\setminus C$. If we do this with $S$ being the total space 
of $\mathcal O_{\mathbb P^1}(k)$, then $\tilde S$ will be the total space of $\mathcal O_{\mathbb P^1}(1)$,
the neighborhood of a line in $\mathbb P^2$. Likely as in the linear case, the lifted curve $\tilde C$
will have self-intersection $\tilde C\cdot \tilde C=1$ and we are back to the case $d=1$. 
Moreover, $\tilde S$ is equipped with the Galois transformation, of order $d$, which has $\tilde C$ as a fixed point curve. 

\begin{prop}\label{prop:PositiveSelfIntersection}
Isomorphism classes $(S,C)$ of germs of $(d)$-neighborhoods of the (para\-metrized) rational curve $x:C\to\mathbb P^1$
are in one-to-one correspondance with isomorphism classes of germs of $(+1)$-neighborhoods $(\tilde S,C)$
equipped with a cyclic automorphism of order $d$ fixing $C$ point-wise.
\end{prop}

We can also see a $(d)$-neighborhood $(S,C)$ from the following point of view.

\begin{prop}
Given $p_1, \ldots, p_{d+1}$ distinct fixed points on $C$. Then, there exists a one-to-one correspondance between $(d)$-neighborhoods $(S,C)$ and neighborhood germs $(V,D)$ of a bouquet $D=D_1 \cup \ldots \cup D_{d+1}$ of rational curves of zero self-intersection intersecting transversely at  one point.  
\end{prop}

\begin{figure}[ht!]
\includegraphics[scale=0.4]{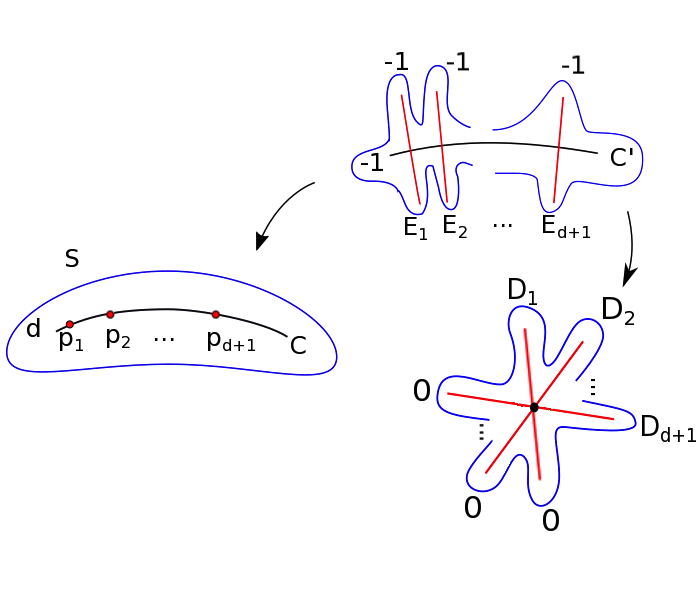}
\caption{0-Neighborhood}
\end{figure}

\begin{proof}
 After blowing-up $(d+1)$ points on $C$, we get a neighborhood of $C' \cup E_1 \ldots \cup E_{d+1}$ where $C'$ is the strict transform of $C$ and $E_i$'s are the exceptional divisors. Moreover, all these $d + 2$ rational curves have self-intersection $-1$. After contracting $C'$, we get a neighborhood $V$ of $d + 1$ rational curves $D_1 \cup \ldots \cup D_{d+1}$ intersecting at $1$ point and having self-intersection $0$. We obtain like this a neighborhood of $D=D_1 \cup \ldots \cup D_{d+1}$ and this process can be reversed.
\end{proof}

\begin{remark}
The germ of surface $(V,D)$ is obtained by gluing $(0)$-neighborhoods $(V_i, D_i)$, which are trivial by \cite{Savelev}, by elements of $\mathrm{Diff}(\CC^2,0)$.
\end{remark}

\subsection{Open questions}

\begin{itemize}
\item {\bf Riemannian metrics}: projective structures arising from holomorphic metrics are rare,
there are infinitely many obstructions (see \cite{BDE}).
Can we characterize these Riemannian projectives structures, or their dual surfaces $U^\vee$, in a geometric way ? 

\item The linear projective structure is Riemannian for several metrics (arbitrary constant curvature).
For a generic Riemannian projective structure, is the Riemannian metric determined up to a scalar ?

\item {\bf Algebraic dimension}: generic $(+1)$-neighborhoods $U^\vee$ do not carry non constant meromorphic functions (see \cite{MaycolFrank2}). Does there exist examples with  
with function field having transcendance dimension one over $\mathbb C$ ?

\item {\bf Painlev\'e equations}: what can be said about those $U^\vee$ associated to a Painlev\'e equation ?
We know that they are rarely Riemannian (see \cite{ContattoDunajski}).
Do they provide explicit examples without non constant meromorphic functions~?
Our Propositions \ref{Prop:PIPII} and \ref{Prop:PVI} are first step towards this direction.

\item {\bf Singularities}: global projective structures on compact surfaces have been classified
in \cite{KO}, they impose strong constraints on the surface. It would be interesting to investigate
a singular version of projective structure, for instance defined by a logarithmic affine connection
(see \cite{Briancon,NY}), in view of examples of orbifold uniformization in dimension 2. 
In this direction, logarithmic singularities of affine structures are studied in \cite{LL}.
\end{itemize}


\begin{thebibliography}{99}
\frenchspacing

\bibitem{Arnold}
{\sc V.I. Arnold},
\emph{Geometrical methods in the theory of ordinary differential equations.}
Grundlehrender Mathematischen Wissenschaften vol 250. Springer-Verlag, New York.

\bibitem{BPV}
{\sc W. Barth, C. Peters, A. Van de Ven},
\emph{Compact complex surfaces.}
Springer-Verlag (1984).

\bibitem{BeLiLo} {\sc M. Belliart, I. Liousse, F. Loray},
\emph{Conformal flows on $\mathbb C,0$ and hexagonal 3-webs.}
Web theory and related topics (Toulouse, 1996), 92-105, 
World Sci. Publ., River Edge, NJ, 2001. 

\bibitem{BG} {\sc R. L. Benedetto, W. M.  Goldman},  
\emph{The topology of the relative character varieties of a quadruply-punctured sphere.} 
Experiment. Math. {\bf 8} (1999) 85-103. 


\bibitem{Briancon}
{\sc J. Brian\c con},
\emph{Extensions de Deligne pour les croisements normaux.}
\'El\'ements de la th\'eorie des syst\`emes diff\'erentiels g\'eom\'etriques, 149-164, 
S\'emin. Congr., 8, Soc. Math. France, Paris, 2004. 

\bibitem{BDE}
{\sc R. Bryant, M. Dunajski, M. Eastwood},
\emph{Metrisability of two-dimensional projective structures.}
J. Differential Geom. 83 (2009) 465-499. 

\bibitem{BMM}
{\sc R. Bryant, G. Manno, V. Matveev},
\emph{A solution of a problem of Sophus Lie: normal forms of two-dimensional metrics admitting two projective vector fields.}
Math. Ann. 340 (2008) 437-463.

\bibitem{Brunella}
{\sc M. Brunella}, \emph{Birational Geometry of Foliations.}
Publica\c coes Matem\'aticas do IMPA. Instituto de Matem\'atica Pura
e Aplicada (IMPA), Rio de Janeiro, 2004.

\bibitem{SergeFrank} {\sc S. Cantat, F. Loray}, 
\emph{Dynamics on character varieties and Malgrange irreducibility of Painlev\'e VI equation.} 
Ann. Inst. Fourier (Grenoble) {\bf 59} (2009) 2927-2978. 

\bibitem{CartanTissus}
{\sc \'E. Cartan}, 
\emph{Les sous-groupes des groupes continus de transformations.}
Ann. Scient. \'Ec. Norm. Sup. 25 (1908) 57-194.


\bibitem{Cartan}
{\sc \'E. Cartan}, 
\emph{Sur les vari\'et\'es \`a connexion projective.}
Bulletin de la S.M.F., tome 52 (1924) 205-241.

\bibitem{GuyPicard}
{\sc G. Casale},
\emph{The Galois groupoid of Picard-Painlev{\'e} VI equation.} Algebraic, analytic and geometric aspects of complex differential equations and their deformations. Painlev{\'e} hierarchies, 15-20,
RIMS K\^oky\^uroku Bessatsu, B2, Res. Inst. Math. Sci. (RIMS), Kyoto, 2007. 

\bibitem{CasaleIrred} 
{\sc G. Casale}, 
\emph{Le groupo\"ide de Galois de P1 et son irr\'eductibilit\'e.} 
Comment. Math. Helv. {\bf 83} (2008) 471-519. 

\bibitem{ContattoDunajski}
{\sc F. Contatto, M. Dunajski},
\emph{Metrisability of Painlev{\'e} equations}. 
J. Math. Phys. 59 (2018) 023507, 9 pp. 

\bibitem{ContattoDunajski2}
{\sc F. Contatto, M. Dunajski},
\emph{First integrals of affine connections and Hamiltonian systems of hydrodynamic type}. 
arXiv:1510.01906 

\bibitem{DM} {\sc B. Dubrovin, M. Mazzocco}, 
\emph{Monodromy of certain Painlev\'e-VI transcendents and reflection groups.} 
Invent. Math. {\bf 141} (2000) 55-147.

\bibitem{Dumas}
{\sc D. Dumas},
\emph{Complex projective structures. Handbook of Teichm\"uller theory.} 
Vol. II, 455-508, IRMA Lect. Math. Theor. Phys., 13, Eur. Math. Soc., Z\"urich, 2009. 

\bibitem{DunajskiWaterhouse}
{\sc M. Dunajski, A. Waterhouse},
\emph{Einstein metrics, projective structures and the $\mathrm{SU}(\infty)$ Toda equation.}
J. Geom. Phys. {\bf 147} (2020) 103523, 21 pp. 



\bibitem{MF}
{\sc M. Falla Luza},
\emph{On the density of second order differential equations without algebraic solutions on $\mathbb{P}^2$.}
Publ. Mat. 55 (2011) 163-183.

\bibitem{CRAS}
{\sc M. Falla Luza, F. Loray},
\emph{On the number of fibrations transverse to a rational curve in complex surfaces.}
C. R. Math. Acad. Sci. Paris 354 (2016) 470-474. 

\bibitem{MaycolFrank}
{\sc M. Falla Luza, F. Loray},
\emph{Projective structures and neighborhoods of rational curves.}
arXiv:1707.07868v3

\bibitem{MaycolFrank2}
{\sc M. Falla Luza, F. Loray},
\emph{Neighborhoods of rational curves without functions.}
Math. Ann. (2021). https://doi.org/10.1007/s00208-020-02126-x

\bibitem{MaycolPaulo}
{\sc M. Falla Luza, P. Sad},
\emph{Positive neighborhoods of rational curves.}
Bull. Braz. Math. Soc. 48 (2017) 103-110.     

\bibitem{FischerGrauert} {\sc W. Fischer, H. Grauert}, 
\emph{Lokal-triviale Familien kompakter komplexer Mannigfaltigkeiten.}
 Nachr. Akad. Wiss. G\"ottingen Math.-Phys. Kl. II (1965) 89-94.
 
\bibitem{Grauert} {\sc H.  Grauert},
\emph{\"Uber Modifikationen und exzeptionelle analytische Mengen.}
Math. Ann. 146 (1962) 331-368. 

\bibitem{Hitchin0} 
{\sc N. Hitchin},
\emph{Complex manifolds and Einstein's equations.} 
Twistor geometry and nonlinear systems (Primorsko, 1980), 73-99, 
Lecture Notes in Math., 970, Springer, Berlin-New York, 1982.

\bibitem{Hitchin}
{\sc N. Hitchin},
\emph{Geometrical aspects of Schlesinger's equation.} 
Journal of Geometry and Physics 23 (1997) 287-300.

\bibitem{Hurtubise}
{\sc J. C. Hurtubise and N. Kamran},
\emph{Projective connections, double fibrations, and formal neighborhoods of lines.}
Math. Ann. 292 (1992) 383-409.

\bibitem{IIS} {\sc M. A. Inaba, K. Iwasaki, M.H. Saito},  
\emph{Dynamics of the sixth Painlev\'e equation.}
Th\'eories asymptotiques et \'equations de Painlev\'e, 103-167, S\'emin. Congr., 14, Soc. Math. France, Paris, 2006.

\bibitem{Krynski}
{\sc W. Kry\'nski},
\emph{Webs and projective structures on a plane.}
Differential Geometry and its Applications 7 (2014) 133-140.

\bibitem{KO} {\sc S. Kobayashi, T. Ochiai},
\emph{Holomorphic projective structures on compact complex surfaces.}
Math. Ann. 249 (1980) 75-94. 

\bibitem{Kodaira} {\sc K. Kodaira}, 
\emph{A theorem of completeness of characteristic systems for analytic families 
of compact submanifolds of complex manifolds.} Ann. of Math. 75 (1962) 146-162.

\bibitem{LeBrun}
{\sc S. R. LeBrun Jr.},
\emph{Spaces of complex geodesics and related structures.}
Phd thesis, Oxford University (1980).

\bibitem{Lie}
{\sc S. Lie},
\emph{Untersuchungen uber geod\"atische Curven.}
 Math. Ann. 20 (1882) 357-454.
 
\bibitem{LinsNeto}  
{\sc A. Lins Neto}, 
\emph{Curvature of pencils of foliations}. Analyse complexe, syst{\`e}mes dynamiques, sommabilit{\'e} des s{\'e}ries divergentes et th{\'e}ories galoisiennes. I. Ast{\'e}risque No. 296 (2004) 167-190.

\bibitem{Liouville}
{\sc R. Liouville},
\emph{Sur les invariants de certaines \'equations diff\'erentielles et sur leurs applications.}
Journal de l'\'Ecole Polytechnique 59 (1889) 7-76.

\bibitem{LL}
{\sc R. Lizarbe, F. Loray},
Logarithmic affine structures, parallelizable d-webs and normal forms.
 arXiv:2105.04186


\bibitem{Loray}
{\sc F. Loray},
\emph{Sur les Th\'eor\`emes I et II de Painlev\'e.}
Contemporary Mathematics, Vol 389, 2005, 165-190.

\bibitem{Mazzocco}
{\sc M. Mazzocco},
\emph{Picard and Chazy solutions to the Painlev{\'e} VI equation.} 
Math. Ann. 321 (2001)  157-195. 

\bibitem{Mishustin}
{\sc M.B. Mishustin},
\emph{Neighborhoods of the Riemann sphere in complex surfaces.}
Funct. Anal. Appl. 27 (1993) 176-185.

\bibitem{Nakai}
{\sc I. Nakai},
\emph{Curvature of curvilinear 4-webs and pencils of one forms: Variation on a theorem of Poincar{\'e}, Mayrhofer and Reidemeister.}
Comment. Math. Helv. 73 (1998) 177-205.

\bibitem{NY} {\sc D. Novikov, S. Yakovenko}, 
\emph{Lectures on meromorphic flat connections.} Normal forms, bifurcations and finiteness problems in differential equations, 387-430, NATO Sci. Ser. II Math. Phys. Chem., 137, Kluwer Acad. Publ., Dordrecht, 2004.

\bibitem{OO} {\sc Y. Ohyama, S. Okumura}, 
\emph{A coalescent diagram of the Painlev\'e equations from the viewpoint of isomonodromic deformations.} 
J. Phys. A {\bf 39} (2006) 12129-12151.

\bibitem{Okamoto}  {\sc K. Okamoto},
\emph{Sur les feuilletages associ\'es aux \'equations du second ordre \`a points critiques fixes de P. Painlev\'e.}
Japan. J. Math. (N.S.) {\bf 5} (1979) 1-79.
 
 \bibitem{PanSebastiani} {\sc I. Pan, M. Sebastiani},
\emph{Les \'equations diff\'erentielles alg\'ebriques et les singularit\'es mobiles.}
Ensaios Matem\'aticos [Mathematical Surveys], 8. Sociedade Brasileira de Matem\'etica, Rio de Janeiro, 2004. 

\bibitem{PereiraPirio}
{\sc J. V. Pereira and L. Pirio}, 
\emph{An invitation to web geometry.}
 Publica\c c\~oes Matem\'aticas do IMPA.
 Instituto Nacional de Matem\'atica Pura e Aplicada (IMPA), 
 Rio de Janeiro, 2009.
 
\bibitem{Rom} 
{\sc Y. R. Romanovskii}
\emph{Computation of Local Symmetries of Second-Order Ordinary Differential Equations by the Cartan Equivalence Method.}
Mathematical Notes, Vol. 60, No. I, 1996.

\bibitem{Savelev} {\sc V. I. Savel'ev}, 
\emph{Zero-type embeddings of the sphere into complex surfaces.} 
Mosc. Univ. Math. Bull. 37 (1982) 34-39.


\end{thebibliography}
\end{document}